\newtheorem{theorem}{Theorem}[section]
\newtheorem{proposition}{Proposition}[section]
\newtheorem{lemma}{Lemma}[section]
\newtheorem{corollary}{Corollary}[section]
\newtheorem{remark}{Remark}[section]
\theoremstyle{definition}
\newtheorem{definition}{Definition}[section]
\newtheorem{notation}{Notation}[section]
\newcommand{\C}{\varmathbb C}
\newcommand{\R}{\varmathbb R}
\renewcommand{\S}{\varmathbb S}
\newcommand{\Z}{\varmathbb Z}
\newcommand{\N}{\varmathbb N}
\newcommand{\Lop}{\mathcal L}
\newcommand{\Rop}{\mathcal R}
\newcommand{\Qop}{\mathcal Q}
\newcommand{\E}{\varmathbb E}
\renewcommand{\Re}{\mbox{Re}}
\title[Scattering for quinti Hartree]{A scattering result around a non-localised equilibria for the quintic Hartree equation for random fields}
\author{Cyril Malézé}
\address{Centre Mathématique Laurent Schwartz\\
\'Ecole Polytechnique, CNRS, Université Paris-Saclay\\
Palaiseau, 91128 Cedex, France}
\email{cyril.maleze@polytechnique.edu}
\begin{document}

\maketitle

\begin{abstract}
    We consider a quintic Hartree equation for a random field, which describes the temporal evolution of a infinitely many fermions, considering a three body interaction. We show a scattering result around a non-localised equilibria of the equation, for high dimensions $d\geq 4$. The Hartree equation for random variables was introduced by de Suzzoni \cite{SLSEDP} but only for a two body interaction, that leads to a cubic Hartree equation for random variables. Scattering results for the cubic Hartree equation have been shown in \cite{CodS18,CodS20}, and we extend those results to the quintic Hartree equation. As in \cite{CodS20} we consider a large range of potentials that includes the Dirac delta
\end{abstract}

\section{Introduction}

In this paper, we prove the stability of a non-localised equilibrium for an equation for random fields, which represents the dynamics of an infinite number of fermions, considering a three body interaction. Precisely, we prove a scattering result concerning the following dispersive equation:

\begin{equation}\label{eqprinc}
    i\partial_t X =-\Delta X + w\circledast(\rho,\rho) X,
\end{equation}

where $X:\R\times\R^d\times\Omega\to \C$ is a time dependent random field over $\R^d$, and $(\Omega,\mathcal{A},d\omega)$ is the underlying probability space, where we denote $\rho:=\E[\lvert X\rvert^2]$ the density function and:$$w\circledast(\rho,\rho):=\int w(x_1-\cdot,x_2-\cdot)\rho(x_1)\rho(x_2)dx_1dx_2.$$ The expectation is with respect to the probability space: $\E(\lvert X\rvert^2)(t,x):=\int_\Omega \lvert X\rvert^2(t,x,\omega)d\omega$.

In this work, we extend a result proved by Collot and de Suzzoni in \cite{CodS18}, in which the authors prove a result of scattering for a similar equation than (\ref{eqprinc}), but for a two body interaction. 

Lewin and Sabin were the first to investigate the problem of stability of equilibria for the Hartree equation in a density operator framework \cite{lewsabII,lewsabI}. The authors proved a scattering results around Fourier multipliers, that are translation invariant equilibria of the Hartree equation. Important tools in their works are dispersion estimates for orthonormal systems \cite{StrWaveOp,StrWaveOp2}.  The work of Lewin and Sabin was continued by Chen, Hong and Pavlović who proved global well-posedness in dimensions $d = 2,\ 3$ for an equilibrium solution at zero temperature \cite{chen2017global} and scattering results in the case of dimension 3 and higher \cite{chen2018scattering}, left undone in \cite{lewsabII}. 
Solutions of the Hartree equation for the density operators with an infinite number of particles were also studied in previous works such as \cite{Bove1974Jan,bove1974existence,bove1976hartree,chadam1976time}. More recently, Lewin and Sabin studied the semi-classical limit of the Hartree equation\cite{Sabin}, and Pusateri and Sigal described the long-time behaviour of the time-dependent Kohn-Sham equation \cite{Pusateri_2021}, which is closely related to the Hartree equation.

The Hartree equation for random field was introduced in the work of de Suzzoni \cite{SLSEDP}, and studied by Collot and de Suzzoni in \cite{CodS18,CodS20}. In \cite{SLSEDP}, the author links the equilibrium studied in \cite{CodS18} and in \cite{CodS20} for the pairwise potential equation for random fields, to the translation invariant stationary states for the analogous equation in density operators framework, studied in \cite{lewsabI,lewsabII}. The stability of this non-localised equilibrium was showed through the proof of a scattering result, first in dimension higher than $4$ \cite{CodS18} and then in dimension $2$ and $3$ \cite{CodS20}. Many thermodynamical equilibrias are included in the result of Collot and de Suzzoni (for instance for bosonic gazes at positive temperature), but not the Fermi gas at zero temperature. In \cite{NAKASTUD}, Hadama proved the stability of steady states in a wide class, which includes Fermi gas at zero temperature in dimension greater than $3$, with strong condition for the potential function.

\subsection{Derivation of the equation}

Before introducing our result, we explain formally the derivation of the equation, inspired by the work of de Suzzoni in \cite{SLSEDP}. The dynamics of fermionic systems was also studied in \cite{Bardos2003Jun,Benedikter2014Nov}. We used the same method than de Suzzoni to obtain our model with a three body interaction: the particles are interacting through the potential $w$, a distribution of $(\R^d)^2$ of the form: $$w(u,v)=\Tilde{w}(u,v)+\Tilde{w}(v,u+v)+\Tilde{w}(u+v,u),$$ where $\Tilde{w}$ is a distribution of $(\R^d)^2$ which is even for each variables and symmetric. That is to say we considered a system of $N$ particles represented by the function $\psi:(\R^d)^{N}\to \C$ with the energy:
$$E_N=-\frac{1}{2}\sum_{i=1}^N\int_{(\R^d)^{N}}dx\ \Bar{\psi}\Delta_i \psi +\frac{1}{2}\sum_{i\neq j,i\neq k,j\neq k}\int_{(\R^d)^{N}}dx\ w(x_i-x_j,x_j-x_k)\Bar{\psi}\psi.$$

The first term of the energy is a term of kinetic energy of the system and the second one is the interaction part. 
 
We take $\psi$ of the form of a Slater determinant: $$\psi(x_1,...,x_N)=\frac{1}{\sqrt{N!}}\sum_{\sigma}\varepsilon(\sigma)\prod_iu_{\sigma(i)}(x_i),$$ 

where $(u_i)$ is an orthonormal family of $L^2(\R^d)$ and $\varepsilon(\sigma)$ is the signature of the permutation $\sigma$. In this case the system satisfies the Pauli principle for fermions.

Under some mean field assumptions, we get the standard system of $N$ coupled Schrödinger equations on $\R\times\R^d$, where $d$ is the space dimension:

\begin{equation}\label{partfinies}
  \forall j,\ i\partial_tu_j=-\Delta u_j+\bigg( \sum_{k,l=1,...,N}\int w(x_1-\cdot,x_2-\cdot)\lvert u_k\rvert^2(x_1)\lvert u_l\rvert^2(x_2)dx_1,dx_2\bigg)u_j.  
\end{equation}

If we take $X:\R\times\R^d\times\Omega\to \C$ a time dependent random field over $\R^d$ , where $(\Omega,\mathcal{A},d\omega)$ is the underlying probability space, of the form $X:=N^{-\frac{1}{2}}\sum u_ig_i$ where $(g_i)_{1\leq i\leq N}$ is an orthonormal family of $L^2(\Omega)$. Then $X$ is a solution of (\ref{eqprinc}). In the rest of the paper, we do not consider only random fields of the previous form $X:=N^{-\frac{1}{2}}\sum u_ig_i$, solutions of (\ref{eqprinc}), but time dependent random fields $X\in \mathcal{C}(\R,L^2(\Omega,H^{\frac{d-1}{2}}(\R^d))$.

\subsection{Main result}

We prove the stability of a non-localised equilibrium, through the proof of a scatterig result. Before we give the main result, we define the equilibrium we consider, given by Wiener integrals: 

\begin{equation}\label{Equilibrium}
    Y_f(t,x,\omega):=\int_{\xi\in\R^d}f(\xi)e^{i\xi x-it(m+\lvert\xi\rvert^2)}dW(\xi),
\end{equation}

for a function $f:\R^d\to \C$ and where we set: $$m=\Tilde{m}^2\int w(u,v)dudv=\bigg(\int_{\R^d}\lvert f \rvert^2\bigg)^2\int w(u,v)dudv.$$ Above, $dW(\xi)$ denotes infinitesimal complex Gaussian characterised by: $$\E(\overline{dW(\eta)}dW(\xi))=\delta(\eta-\xi)d\eta d\xi,$$ where $\delta$ is the Dirac delta function. The function $Y_f$ is a solution of (\ref{eqprinc}) (see section 3), but it is not stationary. However, its law is invariant under time and space translations, and in particular $Y_f$ is not localised. 

In this present work we prove the stability of the equilibria (\ref{Equilibrium}) for (\ref{eqprinc}), via the proof of scattering of perturbations to linear waves in their vicinity, as in \cite{CodS18}.

We set the following conditions for a spherically symmetric function $f$ in $L^2\cap L^\infty(\R^d)$, and $s_c=\frac{d-1}{2}$: 
\begin{description}
    \item[($A_1$)] $\langle \xi\rangle^{\lceil s_c\rceil}f\in L^2(\R^d)$,
    \item[($A_2$)] $\int_{\R^d}\lvert \xi\rvert^{1-d}\lvert f\nabla f\rvert <\infty$,
    \item[($A_3$)] writing $r=\lvert \xi\rvert $ the radial variable, $\partial_r\lvert f\rvert^2$ is continuous on $(0,\infty)$, $\partial_r\lvert f\rvert^2<0$ for $r>0$, and $r^\frac{d-1}{2}\partial_r\lvert f\rvert^2(r)\in L^1(0,\infty)$,
    \item[($A_4$)]writing $h$ the inverse Fourier transform of $ \lvert f\rvert^2$, $\langle x\rangle^2\partial^\alpha h\in L^\infty(\R^d)$ for all $\alpha\in \N^d$ with $\lvert \alpha\rvert\leq 3 \lceil s_c \rceil$,
    \item[($A_5$)] $\lvert x\rvert^{2-d}h\in L^1(\R^d)$.
\end{description}

Finally, we denote: $\underline{w}(u):=\int w(y,u)dy$.

\begin{theorem}\label{Thprinc}(Stability of equilibria for Equation (\ref{eqprinc})). Let $d\geq 4$ and $s_c=\frac{d-1}{2}$. Let f be a spherically symmetric function in $L^2\cap L^\infty(\R^d)$. Assume that $f$ satisfies assumptions $(A_1)$ to $(A_6)$.

Then there exists $C(f)$ such that for all finite measure $w$ of $(\R^d)^2$ even for each variables and symmetric, that satisfies: $\lvert\lvert (\hat{\underline{w}})_-\rvert\rvert_{L^\infty}\leq C(f)$ ; there exists $\varepsilon=\varepsilon(w,f)$ such that for any $Z_0\in H^{s_c},L_\omega^2\cap L_x^{\frac{d}{d+2}},L_\omega^2$ with: $$\lvert\lvert Z_0\rvert\rvert_{L_\omega^2H^{s_c}\cap L_x^{\frac{d}{d+2}}L_\omega^2}\leq \varepsilon ,$$

there is a solution of (\ref{eqprinc}) with initial data $X_0=Y_f(t=0)+Z_0$ which is global in both time directions. Moreover, it scatters to a linear solution in the sense that there exists $Z_-,Z_+\in L_\omega^2H^s$ such that: 
$$X(t)=Y_f(t)+e^{it(\Delta-m)}Z_\pm+o_{L_\omega^2H^s}(1)\ at \ t\to \pm \infty.$$

\end{theorem}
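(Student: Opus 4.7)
I write $X = Y_f + Z$ and observe that $\rho_{Y_f} = \int |f|^2\,d\xi = \tilde m$ is a constant function of $(t,x)$, so that $Y_f$ satisfies $i\partial_t Y_f = (-\Delta + m) Y_f$. Setting $\sigma := \rho_X - \tilde m = 2\Re\,\E[\overline{Y_f} Z] + \E[|Z|^2]$ and expanding
\[ w \circledast(\tilde m + \sigma, \tilde m + \sigma) = m + 2\tilde m\,\underline w \ast \sigma + w \circledast(\sigma, \sigma), \]
the perturbation $Z$ satisfies
\[ i\partial_t Z = (-\Delta + m) Z + \bigl(2\tilde m\,\underline w \ast \sigma + w\circledast(\sigma,\sigma)\bigr)(Y_f + Z). \]
I rewrite this in Duhamel form against $e^{it(\Delta - m)}$ and organize the right-hand side by order of vanishing in $Z$.

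\textbf{Inverting the linearized response operator.} The obstruction to a naive contraction is the piece linear in $Z$ and multiplied by the non-localised factor $Y_f$, namely $4\tilde m\,(\underline w \ast \Re\E[\overline{Y_f} Z])\,Y_f$. After Duhamel, this defines a bounded operator $\Lop_f$ on a suitable Strichartz space of $L^2_\omega$-valued functions whose symbol, in the space-time frequency variables of the density channel, factors as $\hat{\underline w}(\xi)$ times a Wigner-type transform $m_f(\tau,\xi)$ of $|f|^2$. The smallness assumption $\|(\hat{\underline w})_-\|_{L^\infty} \leq C(f)$ is tailored so that, via a Plemelj/Sokhotski identity, $I + \Lop_f$ has real part bounded below on the relevant function space and therefore admits a bounded inverse. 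The hypotheses $(A_3)$--$(A_5)$ provide the sign, smoothness and decay of $m_f$ needed for this analysis, along the strategy of \cite{lewsabI,CodS18}.

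\textbf{Multilinear estimates and fixed point.} Once $(I+\Lop_f)^{-1}$ is in hand, the problem becomes $Z = (I+\Lop_f)^{-1}\bigl[e^{it(\Delta-m)}Z_0 + \mathcal N(Z)\bigr]$, where $\mathcal N(Z)$ collects the genuinely nonlinear contributions: the quadratic-in-$\sigma$ source $w\circledast(\sigma,\sigma)Y_f$, the mixed cubic term $(2\tilde m\,\underline w \ast \sigma)Z$, and the pure quintic term $(w\circledast(\sigma,\sigma))Z$. Each is controlled in a Strichartz norm by combining: the stationary-phase dispersive estimate for $\E[\overline{Y_f(t,x)}Y_f(t,y)]$, which decays like $|t|^{-(d-1)/2}$ under $(A_1)$--$(A_4)$; the classical (and endpoint) Strichartz estimates for $e^{it\Delta}$; and orthonormal-type Strichartz inequalities that convert $L^2_\omega$ control on $Z$ into $L^p_t L^q_x$ control on densities. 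The dual-Strichartz choice $L_x^{d/(d+2)} L^2_\omega$ for the data is precisely what makes the endpoint estimate close in dimension $d\geq 4$. A Banach contraction on a small ball then produces a global solution, and the fact that $\mathcal N(Z)\in L^1_t L^2_\omega H^{s_c}$ delivers the scattering states $Z_\pm$.

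\textbf{Main obstacle.} The technical heart of the argument is the treatment of the quadratic-in-$\sigma$ source $w\circledast(\sigma,\sigma)Y_f$, which is the new feature of the quintic model compared with the cubic case of \cite{CodS18}: it forces a bilinear dispersive estimate on two copies of the non-localised field $Y_f$, controlled with enough time-integrable decay to absorb the loss of a derivative coming from $\underline w \ast \sigma$. Proving such an estimate requires exploiting the oscillation in $\E[\overline{Y_f(t,x)}Y_f(t,y)]$ together with the absolute integrability afforded by $(A_2)$ and $(A_5)$. The other delicate point is the Fourier analysis of $I+\Lop_f$ at the sharp smallness threshold on $(\hat{\underline w})_-$: one must track precisely how the multiplier degenerates where $\hat{\underline w}$ is negative, and recover invertibility from the positivity of $m_f$ built into assumption $(A_3)$.
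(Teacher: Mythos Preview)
Your high-level architecture (perturbation equation, invert the linearised response, contract on the remainder) matches the paper, but the paper's fixed point is \emph{not} set up on $Z$ alone. It is run on a triple $(Z,V_1,V_2)$ where $V_1:=2\Re\E[\bar Y Z]$ and $V_2:=\E[|Z|^2]$ live in \emph{different} function spaces $\Theta_{V_1}$ and $\Theta_{V_2}$. The linearised operator $L$ is triangular on this triple; its only nontrivial block is the density-to-density Fourier multiplier $L_1:V\mapsto -4\tilde m\,\Re\E[\bar Y W_V(Y)]$, and $(1+L_1)^{-1}$ is what is actually constructed. Writing the problem as $Z=(I+\Lop_f)^{-1}[\dots]$ on a single Strichartz space, as you do, blurs this and would, according to the paper's own Remark~7.1, force strictly supercritical regularity $H^{s_c+\delta}$ on $Z_0$. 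The reason is that $V_1$ (linear in $Z$) and $V_2$ (quadratic) have the same low-frequency integrability but different high-frequency decay; only by placing $V_2$ in the tighter space $L_t^{2(d+2)/(d+1)}H_x^{s_c,2(d+2)/(d+1)}\cap L_{t,x}^{d+2}$ and $V_1$ in the looser $L_t^{p_*}H_x^{s_c,p_*}\cap L_{t,x}^{2(d+2)}$, both intersected with the inhomogeneous Besov scale $L_t^2 B_2^{-1/2,s_c-1/2}$, can one close at $s_c=(d-1)/2$.

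Relatedly, the technical heart is not where you locate it. The quadratic source $w\circledast(\sigma,\sigma)Y_f$ is handled fairly mechanically once the $\Theta_{V_1}/\Theta_{V_2}$ framework is in place (Proposition~6.2 and Lemma~6.1, a product estimate for $w\circledast(\cdot,\cdot)$ that reduces to H\"older plus the fractional Leibniz rule). The genuinely new work is (i) the inhomogeneous Besov machinery that separates the $|\xi|^{-1/2}$ singularity at low frequency from the $H^{s_c-1/2}$ gain at high frequency in the $Y$-channel Strichartz estimate (Proposition~2.2), and (ii) the upgrade of the invertibility of $1+L_1$ from the pointwise condition $\inf|1+\hat{\underline w}m_f|>0$ to the full Penrose stability condition $\inf_{\gamma\ge0}|1+\hat{\underline w}m_h^\gamma|>0$, which requires an additional argument near the origin in $(\omega,k)$ using only the smallness of $(\hat{\underline w})_-$ and not of $\hat{\underline w}(0)_+$. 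Your sketch mentions neither the Besov splitting nor the $V_1/V_2$ dichotomy, and your scattering claim $\mathcal N(Z)\in L^1_t L^2_\omega H^{s_c}$ is not the route taken; scattering follows instead from the $\Theta_Z$ bounds via time-truncation of $V$ in the estimates already proved.
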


\begin{remark}
    The conditions on $f$ are satisfied by thermodynamical equilibria for bosonic or fermionic gazes at a positive temperature $T$:
    $$\lvert f(\xi)\rvert^2=\frac{1}{e^{\frac{\lvert\xi\rvert^2-\mu}{T}}-1},\ \ \mu<0,$$and: $$\lvert f(\xi)\rvert^2=\frac{1}{e^{\frac{\lvert\xi\rvert^2-\mu}{T}}+1},\ \ \mu\in\R,$$
    respectively. 

    Note that we have more assumptions on the function $f$ compared to the result for a two body interaction given by Collot and de Suzzoni, see Theorem 1 in \cite{CodS18}. But those assumptions are natural, they are due to the relaxation of the hypothesis on $w$ compared to \cite{CodS18}. However they are similar to the ones in \cite{CodS20} where the hypothesis on $w$ are similar to what we have.
\end{remark}

\begin{remark}
    Notice that the assumptions on the smallness of $\lvert\lvert (\hat{\underline{w}})_-\rvert\rvert_{L^\infty}$ already appears in the seminal work \cite{lewsabII} and were used also in \cite{chen2018scattering,CodS18}, but in these aforementioned work there is also an hypothesis on $\hat{\underline{w}}(0)_+$. Those hypothesis are needed to invert the linear part (see Section 5), but due to new arguments we do not need the hypothesis of smallness of $\hat{\underline{w}}(0)_+$ (the defocusing part of the equation).

\end{remark}

\begin{remark}

    In this paper we take $w$ a finite measure which implies that $\underline{w}$ is also (at most) a finite measure, this is to be compared with \cite{lewsabII} (Corollary 1), in which the authors use a potential in $L^1$ and to \cite{CodS20}. 
    
    We can also note that we take a potential with no regularity, on the contrary in \cite{CodS18} the potential is in a $H^{s,1}$ space, in order to treat the nonlinear part of their equation. However, in \cite{CodS20}, Collot and de Suzzoni aready gives method to take a finite measure as a potential. This latest result is in dimension $2$ and $3$, and we generalize the method to higher dimensions.  
    
\end{remark}

\begin{remark}
    We highlight that we work with the critical regularity for the quintic Schrödinger equation: $s_c=\frac{d-1}{2}$.
\end{remark}

\subsection{Ideas of proof}

The methods used in this paper were in part inspired by \cite{Naka}, by noticing similarities between the study of the stability of equilibria (\ref{Equilibrium}) for (\ref{eqprinc}) and the stability of the trivial solutions for the Gross-Pitaevskii equation $i\partial_t\psi=-\Delta\psi+(\lvert \psi\rvert^2-1)\psi$. In the two problems, the linearised dynamics has distinct dispersive properties at low and high frequencies, and different regularities for low and high frequencies need to be introduced, this is why in Section 2 we introduce inhomogeneous Besov spaces. We also refer to the work of Gustafson, Nakanishi and Tsai \cite{NakaGusTai2,NakaGus3,NakaGusTai1} for more information on scattering result for the Gross-Pitaevskii equation. 

The first intuition in \cite{CodS18} was for the authors to consider a Banach fixed point problem on a pair of variables: the perturbation $Z:=X-Y_f$ and another variable $V=\E[\lvert Z\rvert^2]+2Re\E(\Bar{Y}_f Z)$ which corresponds to the perturbation of the one particle density fonction. The authors then noticed that, for their framework, even if $Re\E(\Bar{Y}_f Z)$ is linear in $Z$, it has good properties regarding dispersion. It has similar integrability properties than $\E[\lvert Z\rvert^2]$ that is quadratic in $Z$, which leads to satisfying dispersive properties, sufficient to obtain the scattering result. However, this is not sufficient for our framework because we have less regularity on our potential compared to \cite{CodS18}.

In this paper, we need to exploit regularity in high frequencies on $V$. We split the variable $V$ into  two terms, a linear one with respect to $Z$: $V_1:=2Re\E(\Bar{Y}_f Z)$ and a quadratic one: $V_2:=\E[\lvert Z\rvert^2]$. We notice that, in low frequencies, $V_1$ and $V_2$ have indeed similar integrability. However, $V_1$ and $V_2$ have different integrability at high frequencies. In particular, $V_2$ displays better decay at infinity for high frequencies. We use this better decay to optimize the hypothesis on the initial condition. Performing a contraction argument on $V=V_1+V_2$ would not allow to observe this better decay at infinity of $V_2$. And the lesser decay of $V_1$ would allow to perform a contraction argument but only if the initial condition is very regular (strictly more than the critical regularity). This is why we divide $V$ into $V_1$ and $V_2$ and perform a contraction argument on $(Z,V_1,V_2)$, see Remark \ref{rmkCI}.

\subsection{Organization of the paper}

The paper is organised as follows. In Section 2, we give some notations and known results useful for the rest of the paper. In Section 3, we state the fixed point problem we need to solve in order to prove the theorem. In Section 4, we show some direct embeddings and Strichartz estimates. In Section 5, we study the linear terms, using previous works. In section 6, we identify a condition sufficient to treat all of the nonlinear terms, and we verifiy that this condition is verified in our framework. Estimates for the initial perturbation are showed in Section 7. Theorem \ref{Thprinc} is proven in section 8.  

\subsection{Acknowledgements}

The author would like to thank Prof. Anne-Sophie de Suzzoni and Prof. Charles Collot for their guidance and support. They both gave many most helpful comments during the writing of this paper. The author also thanks Prof. Daniel Han Kwan for his contribution to Proposition \ref{linearpropinversion1}. 
The author is supported by the ANR ESSED ANR-18-CE40-0028.

\section{Notations and preliminary results}\label{Notation}

\subsection{Besov spaces}

Here we take $\eta$ a smooth function with support included in the annulus $\{ \xi \in\R^d, \ \lvert\xi\rvert\in(1/2,2)\} $, we define for $j\in \Z$, $\eta_j(\xi):=\eta(2^{-j}\xi)$ and we assume that on $\R^d\backslash \{0\},\sum_j \eta_j=1 $. 

\begin{notation}
For any $u\in\mathcal{S}'$, we write $u_j=\Delta_j u$, where $\Delta_j$ is the Fourier multiplier by $\eta_j$. 

We have $u=\sum u_j$ and we call this the Littlewood-Paley decomposition of $u$.
\end{notation}

In the following definition, we define the inhomogeneous Besov spaces where the low and high frequencies are treated differently, in the following sense: 

\begin{definition}[Inhomogeneous Besov spaces]
Let $s,t\in \R$ and $p\geq 1$, we define $B_p^{s,t}$ by the space induced by the norm: $$\lvert\lvert u\rvert\rvert_{B_p^{s,t}} =\bigg(\sum_{j<0}2^{2js}\lvert\lvert u_j\rvert\rvert_{L^p}^2+\sum_{j\geq 0}2^{2jt}\lvert\lvert u_j\rvert\rvert_{L^p}^2\bigg)^{1/2}.$$
\end{definition}

\begin{remark}
    $B_p^{s,t}$ is a Banach space when $s\leq \frac{d}{p}$.
\end{remark}

\subsection{Notations}

\begin{notation}
We denote the japanese bracket by $\langle \xi\rangle:=(1+\lvert \xi\rvert^2)^{1/2}$.

We define by $S(t)$ the operator $S(t)=e^{-it(m-\Delta)}$. 

We use the standard notations for the Lebesgue and Bessel spaces.
\end{notation}

\begin{notation}(Fourier transform)
We define the Fourier transform with the following normalisation: $$\hat{g}(\xi)=\mathcal{F}(g)(\xi)=\int_{\R^d}g(x)e^{-ix\xi}dx,$$
and the inverse Fourier transform: $$\mathcal{F}^{-1}(g)(x)=(2\pi)^{-d}\int_{\R^d}g(\xi)e^{ix\xi}d\xi.$$
\end{notation}

\begin{notation}(Time-space norms) For $p,q\in [1,\infty]$ we denote $L_t^p,L_x^q$ the space $L^p(\R,L^q(\R^d))$, and for $s\in\R$ we denote $L_t^p,H_x^{s,q}$ the space $L^p(\R,H^{s,q}(\R^d)).$ In the case $s=2$ we also write $L_t^p,H_x^s=L_t^p,H_x^{s,2}.$ If $p=q$ we write $L_{t,x}^p=L_t^p,L_x^p$.

For $p,q\in[1,\infty],\ s,t\in\R$ we denote by $L^p_t,B_q^{s,t}$ the space $L^p(\R,B_q^{s,t}(\R^d))$.

\end{notation}

\begin{notation}(Probability-time-space norms) 
For $p,q\in [1,\infty]$ we denote $L_\omega^2,L_t^p,L_x^q$ the space $L^2(\Omega,L^p(\R,L^q(\R^d)))$, and for $s\in\R$ we denote $L_\omega^2,L_t^p,H_x^{s,q}$ the space $L^2(\Omega,L^p(\R,H^{s,q}(\R^d))).$ In the case $s=2$ we also write $L_\omega^2,L_t^p,H_x^s=L_\omega^2,L_t^p,H_x^{s,2}.$ 

For $p,q\in[1,\infty],\ s,t\in\R$ we denote by $L_\omega^2,L^p_t,B_q^{s,t}$ the space $L(\Omega,L^p(\R,B_q^{s,t}(\R^d)))$.
\end{notation}

\begin{notation}(Time-space-probability norm)
For $p,q\in [1,\infty]$ we denote $L_t^p,L_x^q,L_\omega^2$ the space $L^p(\R,L^q(\R^d,L^2(\Omega)))$, and for $s\in\R$ we denote $L_t^p,H_x^{s,q},L_\omega^2$ the space $$(1-\Delta_x)^{-\frac{s}{2}}L^p(\R,L^q(\R^d,L^2(\Omega))),$$
with the norm: $$\lvert\lvert u\rvert\rvert_{L_t^p,H_x^{s,q},L_\omega^2}=\lvert \lvert \langle \nabla\rangle^{s}u\rvert\rvert_{L_t^p,L_x^{q},L_\omega^2}.$$

In the case $s=2$ we also write $L_t^p,H_x^s, L_\omega^2=L_t^p,H_x^{s,2},L_\omega^2.$ 

For $p,q\in[1,\infty],\ s,t\in\R$ we denote by $L^p_t,B_q^{s,t},L_\omega^2=L^p,B_q^{s,t},L_\omega^2$ the space induced by the norm: $$\lvert\lvert u\rvert\rvert_{L^p,B_q^{s,t},L_\omega^2}=\lvert\lvert \bigg(\sum_{j<0}2^{2js}\lvert\lvert u_j\rvert\rvert^2_{L_x^q,L_\omega^2}+\sum_{j\geq 0}2^{2jt}\lvert\lvert u_j\rvert\rvert^2_{L_x^q,L_\omega^2}\bigg)^{\frac{1}{2}}\rvert\rvert_{L^p(\R)}.$$
\end{notation}

\subsection{Strichartz estimates}

First we give the usual Strichartz estimates, for more information on these estimates, we refer to \cite{GV4,GV3,GV,livreTao}. We recall that we work in dimension $d\geq 4$.

\begin{proposition} \label{strgen}
Let $(q,r)\in [2,\infty]^2$ such that: $\frac{2}{q}+\frac{d}{r}=\frac{d}{2}$. There exists $C$ such that for all $u_0\in L^2(\R^d)$: $$ \lvert\lvert S(t)u_0 \rvert \rvert_{L_t^qL_x^r}\leq C \lvert\lvert u_0 \rvert \rvert_{L^2}.$$
\end{proposition}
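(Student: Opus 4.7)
The statement is the standard Strichartz estimates for the free Schrödinger equation, so my plan is to follow the classical $TT^*$ argument of Ginibre–Velo, with the endpoint case handled by Keel–Tao.

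First I would reduce to the purely dispersive evolution. Since $S(t) = e^{-itm}e^{it\Delta}$ and the multiplication by $e^{-itm}$ is pointwise unitary (it is a scalar phase of modulus one), one has $\|S(t)u_0\|_{L^q_t L^r_x} = \|e^{it\Delta}u_0\|_{L^q_t L^r_x}$, so the parameter $m$ plays no role and it is enough to prove the inequality for $e^{it\Delta}u_0$.

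Next I would recall the two building blocks. First, the $L^2$ conservation $\|e^{it\Delta}u_0\|_{L^2_x} = \|u_0\|_{L^2}$ for every $t$. Second, the pointwise dispersive estimate
\begin{equation*}
\|e^{it\Delta}u_0\|_{L^\infty_x} \leq C|t|^{-d/2}\|u_0\|_{L^1_x},\qquad t\neq 0,
\end{equation*}
obtained from the explicit convolution kernel of the free Schrödinger group. Complex interpolation between these two bounds yields, for every $r\in[2,\infty]$,
\begin{equation*}
\|e^{it\Delta}u_0\|_{L^r_x} \leq C|t|^{-d(\tfrac12-\tfrac1r)}\|u_0\|_{L^{r'}_x}.
\end{equation*}

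Then I would carry out the $TT^*$ argument with $T=e^{it\Delta}$. Setting $F(t,x)=e^{it\Delta}u_0(x)$, duality in $L^q_tL^r_x$ reduces the desired inequality to a bilinear estimate on $TT^*$ acting on test functions $G\in L^{q'}_t L^{r'}_x$, namely
\begin{equation*}
\Bigl\|\int e^{i(t-s)\Delta}G(s,\cdot)\,ds\Bigr\|_{L^q_tL^r_x} \lesssim \|G\|_{L^{q'}_tL^{r'}_x}.
\end{equation*}
Plugging in the dispersive bound above turns this into
\begin{equation*}
\Bigl\|\int |t-s|^{-d(\tfrac12-\tfrac1r)} \|G(s,\cdot)\|_{L^{r'}_x}\,ds\Bigr\|_{L^q_t}\lesssim \|G\|_{L^{q'}_tL^{r'}_x},
\end{equation*}
and the admissibility relation $\tfrac{2}{q}+\tfrac{d}{r}=\tfrac{d}{2}$ is exactly what is needed so that the kernel $|t-s|^{-d(\tfrac12-\tfrac1r)}$ has the correct homogeneity for Hardy–Littlewood–Sobolev in time to close the estimate, for all non-endpoint pairs $(q,r)$ with $q>2$.

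The main obstacle is the endpoint $q=2$, $r=\tfrac{2d}{d-2}$, since HLS fails at the weak-type exponent and the above naive argument does not apply. Here I would invoke the Keel–Tao endpoint theorem, whose abstract framework applies directly thanks to the dispersive and energy estimates already established; the hypothesis $d\geq 4$ ensures $r<\infty$, which keeps us in the admissible range. Combining the generic case with the endpoint yields the full range stated in the proposition, with $C$ depending only on $d$, $q$ and $r$.
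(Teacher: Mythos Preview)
Your proposal is correct and follows the classical $TT^*$ argument of Ginibre--Velo together with the Keel--Tao endpoint, which is precisely the content of the references the paper cites. The paper itself does not give a proof of this proposition: it states it as a known result and refers the reader to \cite{GV4,GV3,GV,livreTao}, so your sketch is in fact more detailed than what appears in the paper.
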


\begin{definition}

Any $(q,r)\in [2,\infty]^2$ such that: $\frac{2}{q}+\frac{d}{r}=\frac{d}{2}$ is said to be admissible.

\end{definition}

\begin{proposition}
Let $(q_1,r_1)$ and $(q_2,r_2)$ admissible. 

There exists $C$ such that for all $F\in L_t^{q_1'}L_x^{r_1'}(\R\times \R^d)$, and for all $u_0\in L^2(\R^d)$: 
\begin{equation}\label{str}
     \lvert\lvert S(t)u_0-i\int_0^t S(t-\tau)F(\tau)d\tau \rvert \rvert_{L_t^{q_2}L_x^{r_2}}\leq C \bigg[\lvert\lvert u_0 \rvert \rvert_{L^2} + \lvert\lvert F \rvert \rvert_{L_t^{q_1'}L_x^{r_1'}}\bigg].
\end{equation}

\end{proposition}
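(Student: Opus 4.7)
The plan is to prove Theorem~\ref{Thprinc} via a Banach contraction argument applied to a carefully chosen triple of unknowns. Writing $X = Y_f + Z$ and using that $Y_f$ itself solves \eqref{eqprinc}, the perturbation $Z$ satisfies a Duhamel equation of the form
$$Z(t) = S(t)Z_0 - i\int_0^t S(t-\tau)\,\mathcal N(\tau)\,d\tau,$$
where $\mathcal N$ gathers every term produced by expanding $w\circledast(\rho,\rho)(Y_f+Z)-mY_f$ with $\rho = \E[|Y_f+Z|^2]$. Sorting these contributions by degree in $Z$, one identifies a piece linear in $Z$ through the density $V_1 := 2\Re\E(\bar Y_f Z)$, a piece quadratic and higher through $V_2 := \E[|Z|^2]$, and various products of those with $Y_f$ and $Z$. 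Following the idea sketched in the introduction, I would run the contraction not on $Z$ alone, nor on $(Z,V_1+V_2)$, but on the triple $(Z,V_1,V_2)$: this splitting is what allows the better high-frequency decay of the quadratic density $V_2$ to be exploited, so that only the critical regularity $s_c=(d-1)/2$ has to be imposed on $Z_0$.

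The working space would be a product of a probability--time--space Strichartz-type norm for $Z$ at regularity $s_c$, with two inhomogeneous Besov norms of the type $L^p_t B^{s,t}_q L_\omega^2$ for $V_1$ and $V_2$, with exponents chosen to match the different dispersive behaviour of the linearised dynamics at low and high frequencies --- this is precisely why the inhomogeneous Besov scale of Section~\ref{Notation} is introduced. The key steps would then be: (i)~set up the map $\Phi:(Z,V_1,V_2)\mapsto(\tilde Z,\tilde V_1,\tilde V_2)$ arising from the Duhamel formula for $Z$ together with the natural identities defining $V_1,V_2$; (ii)~invert the linear part, which is where the condition $\|(\hat{\underline w})_-\|_{L^\infty}\le C(f)$ enters by ensuring that the operator $\mathrm{Id}-L$ encoding the linear response of the equilibrium stays invertible on the chosen Besov scale; (iii)~prove one common product estimate covering all the nonlinear terms, obtained by dyadic decomposition and by reducing each $w\circledast$ factor to an integration of kernels against the finite measure $w$, so that no regularity on $w$ is used; (iv)~bound the free evolution $S(t)Z_0$ in the working space by $\varepsilon$, with the help of the hypothesis $Z_0\in L_\omega^2 H^{s_c}\cap L_x^{d/(d+2)}L_\omega^2$, whose second component is what provides the low-frequency dispersion of $V_1$ through the cross-term $\bar Y_f Z$.

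The main obstacle is the interplay between steps (ii) and (iii): the linear inversion couples $V_1$ and $V_2$ via the response of the equilibrium, while the nonlinear estimates must close using only the finite-measure regularity of $w$, without the spatial smoothing exploited in \cite{CodS18}. As in \cite{CodS20}, the missing smoothing has to be found \emph{inside} the quadratic variable $V_2$, which is possible thanks to the probabilistic gain coming from Wiener-type cancellations in $\E[|Z|^2]$ combined with the dyadic Littlewood--Paley splitting. Once the contraction closes on a ball of radius $O(\varepsilon)$ in the working space, the unique fixed point produces a global-in-time solution $X$ of \eqref{eqprinc}. Scattering in $L_\omega^2 H^s$ would then be extracted in the standard way: because $\mathcal N$ belongs to a dual Strichartz class, the Duhamel integral has limits in $L_\omega^2 H^s$ as $t\to\pm\infty$, yielding $Z_\pm\in L_\omega^2 H^s$ such that $X(t)=Y_f(t)+e^{it(\Delta-m)}Z_\pm+o_{L_\omega^2 H^s}(1)$ as $t\to\pm\infty$, which is exactly the statement of Theorem~\ref{Thprinc}.
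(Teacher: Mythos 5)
Your proposal does not address the statement you were asked to prove. The statement is the classical inhomogeneous Strichartz estimate \eqref{str} for the propagator $S(t)=e^{-it(m-\Delta)}$ with a pair of admissible exponents $(q_1,r_1)$, $(q_2,r_2)$: a bound on $S(t)u_0 - i\int_0^t S(t-\tau)F(\tau)\,d\tau$ in $L_t^{q_2}L_x^{r_2}$ by $\lVert u_0\rVert_{L^2}+\lVert F\rVert_{L_t^{q_1'}L_x^{r_1'}}$. What you have written instead is an outline of the proof of Theorem \ref{Thprinc} (the fixed-point scheme on the triple $(Z,V_1,V_2)$, the inversion of $1-L$, the treatment of the nonlinear terms, and the extraction of the scattering states). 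None of that establishes, or even bears on, the inequality \eqref{str}; the Strichartz estimate is an ingredient that the contraction argument \emph{uses}, not a consequence of it, so the logic runs in the wrong direction.

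For the record, the paper does not prove this proposition either: it is quoted as a standard result with references to the literature on Strichartz estimates. A self-contained proof would proceed in the usual way: the phase $e^{-itm}$ is unimodular and can be discarded, the dispersive bound $\lVert e^{it\Delta}\rVert_{L^1\to L^\infty}\lesssim |t|^{-d/2}$ together with the $TT^*$ argument and the Hardy--Littlewood--Sobolev inequality gives the homogeneous and dual estimates for a single admissible pair (with the Keel--Tao endpoint argument for $(q,r)=(2,\frac{2d}{d-2})$), and the retarded estimate for two \emph{different} admissible pairs follows by combining the dual estimate with the Christ--Kiselev lemma. If you intend to supply a proof of this proposition, that is the argument you need to write out; the material you submitted belongs to Section 8 of the paper, not here.
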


\begin{corollary}
Let $(q_1,r_1)$ and $(q_2,r_2)$ admissible. 

There exists $C$ such that for all $F\in L_t^{q_1'}H_x^{s,r_1'}$, and for all $u_0\in H^s$: 

\begin{equation}\label{strw}
\lvert\lvert S(t)u_0-i\int_0^t S(t-\tau)F(\tau)d\tau \rvert \rvert_{L_t^{q_2}H_x^{s,r_2}}\leq C \bigg[\lvert\lvert u_0 \rvert \rvert_{H^s} + \lvert\lvert F \rvert \rvert_{L_t^{q_1'}H_x^{s,r_1'}}\bigg].
\end{equation}

Moreover, let $(p,q,s)$ such that $r\geq 2$ where $\frac{1}{r}=\frac{1}{q}+\frac{s}{d}$ and $\frac{2}{p}+\frac{d}{q}+s=\frac{d}{2}$. There exists $C>0$ such that for all $u_0\in H^s$ and $F\in L_t^{q_1'}H_x^{s,r_1'}$: 
\begin{equation}\label{strhs}
    \lvert\lvert S(t)u_0-i\int_0^t S(t-\tau)F(\tau)d\tau\rvert \rvert_{L_t^{q}L_x^{p}}\leq C \bigg[\lvert\lvert u_0 \rvert \rvert_{H_x^s} + \lvert\lvert F \rvert \rvert_{L_t^{q_1'}H_x^{s,r_1'}}\bigg].
\end{equation}

Lastly we have the following dual Strichartz estimates. 

There exists $C$ such that for all $F\in L_t^{q_1'}H_x^{s,r_1'}$: 
\begin{equation}\label{strdual}
    \lvert\lvert \int_0^\infty S(-\tau)F(\tau)d\tau\rvert\rvert_{H^s}\leq C \lvert\lvert F \rvert\rvert_{L_t^{q_1'}H_x^{s,r_1'}}.
\end{equation}
\end{corollary}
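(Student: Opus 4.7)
The plan is to deduce all three estimates (\ref{strw}), (\ref{strhs}), and (\ref{strdual}) from the basic Strichartz inequalities already in hand (Proposition \ref{strgen} and (\ref{str})) via three standard moves: commuting the Bessel potential $\langle \nabla \rangle^s$ with the free propagator, a Sobolev embedding in space, and $L^2$-duality. The content is routine and I do not anticipate any serious obstacle; the only care needed is the bookkeeping of the exponents in the middle estimate.

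For (\ref{strw}): since $S(t) = e^{-itm} e^{it\Delta}$ is a Fourier multiplier in $x$, it commutes with $\langle \nabla \rangle^s$. I would apply (\ref{str}) to the pair $(\langle \nabla \rangle^s u_0, \langle \nabla \rangle^s F)\in L^2 \times L_t^{q_1'} L_x^{r_1'}$, whose norms are $\|u_0\|_{H^s}$ and $\|F\|_{L_t^{q_1'} H_x^{s, r_1'}}$ respectively, and then pull $\langle \nabla \rangle^s$ back outside both the homogeneous term and the Duhamel integral. The resulting bound on the $L_t^{q_2} L_x^{r_2}$ norm of $\langle \nabla \rangle^s$ applied to the left-hand side is exactly (\ref{strw}).

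For (\ref{strhs}): the hypotheses $r \geq 2$, $\tfrac{1}{r} = \tfrac{1}{q} + \tfrac{s}{d}$, and $\tfrac{2}{p} + \tfrac{d}{q} + s = \tfrac{d}{2}$ are arranged precisely so that a suitable Strichartz pair is admissible and so that the Sobolev embedding $H^{s, r}(\R^d) \hookrightarrow L^q(\R^d)$ (for $s \geq 0$) applies. Writing (\ref{strw}) for that admissible pair on the left and then composing with the Sobolev embedding in space yields the $L^p_t L^q_x$-type bound of (\ref{strhs}).

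For (\ref{strdual}): this is the $TT^*$ dual of Proposition \ref{strgen}, lifted to $H^s$. Since $S(t)$ is unitary on $L^2$ we have $S(-\tau)^* = S(\tau)$; for any $g \in L^2$, Fubini gives
$$\Bigl\langle g,\, \int_0^\infty S(-\tau) F(\tau)\,d\tau \Bigr\rangle_{L^2} = \int_0^\infty \langle S(\tau) g,\, F(\tau) \rangle_{L^2}\,d\tau,$$
which by Hölder in time-space and Proposition \ref{strgen} is at most $C \|g\|_{L^2} \|F\|_{L_t^{q_1'} L_x^{r_1'}}$. Taking the supremum over unit $g \in L^2$ gives the $L^2$-version of (\ref{strdual}); inserting $\langle \nabla \rangle^s$ on both sides and again exploiting its commutation with $S(t)$ upgrades the bound to the $H^s$-version.
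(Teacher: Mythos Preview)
Your proposal is correct and follows exactly the standard route one would expect; the paper itself states this corollary without proof, treating all three estimates as immediate consequences of Proposition~\ref{strgen} and (\ref{str}). One small remark: in (\ref{strhs}) the exponents $p$ and $q$ appear swapped in the paper's displayed norm (it should read $L_t^{p}L_x^{q}$, consistent with the admissibility of $(p,r)$ and the Sobolev embedding $H^{s,r}\hookrightarrow L^q$ that your argument uses), but you handled the bookkeeping correctly.
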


The following proposition was introduced in \cite{CodS18} (Proposition 5.2) and gives motivation to the introduction of the inhomogeneous spaces. Indeed, we can see that a singularity "appears" in the low frequencies when we are dealing with a part of the linear flow (see Section 3). Using hypothesis $(A_4)$ we have that for $j\in \N, 0\leq j\leq \lceil s_c \rceil $ writing $h_j$ the inverse Fourier transform of $(\lvert \xi\rvert^j \lvert f\rvert)^2$, $\langle x\rangle^2\partial^\alpha h_j\in L^\infty(\R^d)$ for all $\alpha\in \N^d$ with $\lvert \alpha\rvert\leq 2 \lceil s_c \rceil$, then we can extend the result of Collot and de Suzzoni to the following result: 

\begin{proposition}\label{prop52}
Let $\sigma,\sigma_1,p_1,q_1$ be such that $\sigma,\sigma_1\geq 0,\sigma\leq \lceil s_c\rceil,p_1>2$ and: $$\frac{2}{p_1}+\frac{d}{q_1}=\frac{d}{2}-\sigma_1.$$

Assume moreover that for all $\alpha \in \N^d$ with $\lvert \alpha\rvert\leq 2 \lceil \sigma\rceil$, and for all $j\in \N,\ s\leq \lceil s_c\rceil$, $\lvert \partial^\alpha h_j\rvert\lesssim \langle x\rangle^{-2}$. Then there exists $C>0$ such that for all $U\in L_t^2,B_2^{-\frac{1}{2},\sigma_1+\sigma}$, and for all $0\leq j\leq \lceil s_c\rceil$:
\begin{equation}\label{prop521}
    \lvert\lvert \int_0^\infty S(t-s)\nabla^j Y_f(s)U(s)ds\rvert\rvert_{L_t^{p_1},B_{q_1}^{0,\sigma},L_\omega^2}\leq C \lvert\lvert U\rvert\rvert_{L_t^2,B_2^{-\frac{1}{2},\sigma_1+\sigma-\frac{1}{2}}}.\end{equation}

And if $\sigma+\sigma_1\leq \lceil s_c\rceil$, there exists $C>0$ such that for all $U\in L_t^2,B_2^{\sigma-\frac{1}{2},\sigma_1+\sigma}$:

\begin{equation}\label{prop522}\lvert\lvert \int_0^\infty S(t-s)\nabla^j Y_f(s)U(s)ds\rvert\rvert_{L_t^{p_1},H^{\sigma,q_1},L_\omega^2}\leq C \lvert\lvert U\rvert\rvert_{L_t^2,B_2^{\sigma_1-\frac{1}{2},\sigma_1+\sigma-\frac{1}{2}}}.\end{equation}
\end{proposition}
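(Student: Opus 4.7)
The strategy is to adapt the argument of Proposition~5.2 in \cite{CodS18}, which corresponds to the case $j=0$, to the presence of the derivative $\nabla^j$. The key observation is that $\nabla^j Y_f$ is again a centered Gaussian random field of the form (\ref{Equilibrium}), but with profile $(i\xi)^j f(\xi)$ in the Wiener integral. Consequently, a direct application of the Itô isometry yields the two-point correlation
\begin{equation*}
\E\bigl[(\nabla^j Y_f)(s,y)\,\overline{(\nabla^j Y_f)(s',y')}\bigr] = e^{i(s'-s)m}\bigl(e^{i(s'-s)\Delta}h_j\bigr)(y-y'),
\end{equation*}
where $h_j$ is the inverse Fourier transform of $|\xi|^{2j}|f(\xi)|^2$. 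The reinforcement of $(A_4)$ collected just before the statement guarantees that each $h_j$ for $0\leq j\leq \lceil s_c\rceil$ enjoys the same $\langle x\rangle^{-2}$ decay as $h$ itself, together with all the derivatives needed up to order $2\lceil s_c\rceil$. This is exactly what allows the kernel estimates of \cite{CodS18} to be reused without structural modification.

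The first estimate (\ref{prop521}) is then obtained by a $TT^*$ argument. Writing $T_j U(t,x) = \int_0^\infty S(t-s)\bigl(\nabla^j Y_f(s)U(s)\bigr)(x)\, ds$, a direct computation combined with the correlation above gives
\begin{equation*}
\|T_j U(t,x)\|_{L^2_\omega}^2 = \iint K_j(t,s,s';x,y,y')\, U(s,y)\,\overline{U(s',y')}\, dy\, dy'\, ds\, ds',
\end{equation*}
where the kernel $K_j$ couples two copies of the Schrödinger kernel with a convolution by $h_j$. The dispersive decay of $S(t)$ together with the pointwise decay of $h_j$ produces a time kernel to which a Hardy--Littlewood--Sobolev / Schur-type inequality applies, yielding the estimate at a single dyadic frequency. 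The Littlewood--Paley decomposition of $U$ then transfers to the output, and the asymmetric Besov weights fall out naturally: the low-frequency exponent $-\tfrac{1}{2}$ is the usual Strichartz gain, while the high-frequency exponent $\sigma_1+\sigma-\tfrac{1}{2}$ is tuned so that the factors $2^{jk}$ arising from $\nabla^j$ combine with the target weight $2^{k\sigma}$ into a summable series in the dyadic index $k$.

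The second estimate (\ref{prop522}) follows from (\ref{prop521}) by paying $\sigma_1$ extra derivatives on both sides: the Sobolev norm $H^{\sigma,q_1}$ is controlled by the Besov norm $B_{q_1}^{0,\sigma}$ plus a low-frequency remainder, and the constraint $\sigma+\sigma_1\leq \lceil s_c\rceil$ is precisely what is needed to ensure that all the $h_j$ appearing in the bilinear computation still satisfy the decay required to run (\ref{prop521}). The main technical obstacle is the bookkeeping of the derivative $\nabla^j$ through the Besov decomposition: one has to check that every $2^{jk}$ produced by a frequency-localisation of $Y_f$ is compensated by the asymmetric weights in $B_2^{-1/2,\sigma_1+\sigma-1/2}$ and by the decay of $h_j$ coming from $(A_4)$; this is precisely where the hypothesis $\sigma\leq \lceil s_c\rceil$ enters, together with the range $0\leq j\leq \lceil s_c\rceil$ in which the auxiliary decay on $h_j$ is guaranteed.
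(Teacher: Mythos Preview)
Your approach is correct and matches the paper's own justification: the paper does not give an independent proof but simply observes (in the remark immediately following the statement) that $\nabla^j Y_f$ is again a Wiener integral of the form (\ref{Equilibrium}) with profile $|\xi|^j f(\xi)$, so that the associated correlation kernel is governed by $h_j=\mathcal{F}^{-1}(|\xi|^{2j}|f|^2)$, and hypothesis $(A_4)$ provides exactly the $\langle x\rangle^{-2}$ decay on $\partial^\alpha h_j$ needed to invoke Proposition~5.2 of \cite{CodS18} verbatim. Your write-up goes further by sketching the $TT^*$/kernel argument underlying that cited result, which the paper omits entirely; this is helpful exposition but not additional mathematical content beyond what the paper claims.
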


\begin{remark}
    In the paper \cite{CodS18}, the inequalities concern only $Y_f$ and not $\nabla^j Y_f$. But with the hypothesis $(A_4)$, as:
    $$\nabla^j Y_f=\int_{\R^d} \lvert \xi \rvert^j f(\xi)e^{ixt-it(\lvert \xi\rvert^2-m)}dW(\xi),$$
    we can apply the result of Collot and de Suzzoni to $\nabla^j Y_f$.
\end{remark}

\section{Strategy of the proof}\label{Strat}

We want to investigate the stability of the solution $Y_f$ of (\ref{eqprinc}). In the following we use the notation $Y$ instead of $Y_f$ to lighten the notations.

We begin by showing that $Y$ is indeed a solution. 

\begin{proposition}
    Under the hypothesis of Theorem \ref{Thprinc}, $Y$ is a solution of equation $(\ref{eqprinc})$.
\end{proposition}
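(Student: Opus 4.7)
The plan is to verify the equation directly from the Wiener integral representation of $Y_f$. The computation splits into three independent pieces: the two linear terms $i\partial_t Y_f$ and $-\Delta Y_f$, and the nonlinear coefficient $w\circledast(\rho,\rho)$, where $\rho = \E[|Y_f|^2]$.

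First, I would differentiate under the Wiener integral. The time derivative brings down a factor $-i(m+|\xi|^2)$ and the spatial Laplacian brings down $-|\xi|^2$, giving formally
\begin{equation*}
    i\partial_t Y_f + \Delta Y_f = \int_{\R^d} m\, f(\xi) e^{i\xi x - it(m+|\xi|^2)}\, dW(\xi) = m\, Y_f.
\end{equation*}
These differentiations under the Wiener integral are justified by the isometry $\E|\int g\, dW|^2 = \|g\|_{L^2}^2$, using that $f\in L^2$ and that $\langle\xi\rangle^{\lceil s_c\rceil} f\in L^2$ by assumption $(A_1)$, so that $|\xi|^2 f \in L^2$ in all dimensions $d\geq 4$.

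Next I would compute the density $\rho(t,x) = \E[|Y_f|^2](t,x)$. Writing $|Y_f|^2$ as a double Wiener integral and applying the covariance rule $\E[\overline{dW(\eta)}\, dW(\xi)] = \delta(\eta-\xi)\, d\eta\, d\xi$, the phase $e^{i(\xi-\eta)x - it(|\xi|^2 - |\eta|^2)}$ collapses to $1$ on the diagonal and we obtain
\begin{equation*}
    \rho(t,x) = \int_{\R^d} |f(\xi)|^2\, d\xi = \tilde{m},
\end{equation*}
a constant independent of both $t$ and $x$. This confirms that $Y_f$ is spacetime-translation-invariant in law. Consequently the nonlinear coefficient factors out as a pure constant: using the change of variables $u = x_1 - x$, $v = x_2 - x$,
\begin{equation*}
    w\circledast(\rho,\rho)(x) = \tilde{m}^2 \int w(x_1-x,x_2-x)\, dx_1\, dx_2 = \tilde{m}^2 \int w(u,v)\, du\, dv = m,
\end{equation*}
where the change of variables is interpreted in the distributional/measure sense since $w$ is only a finite measure.

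Combining the two computations, $i\partial_t Y_f = -\Delta Y_f + m\, Y_f = -\Delta Y_f + w\circledast(\rho,\rho)\, Y_f$, which is exactly \reff{eqprinc}. The only delicate step is justifying the manipulations under the Wiener integral (differentiation, and the covariance identity for the product of two Wiener integrals); these are standard once one has $f\in L^2$ with enough moments in $\xi$, which is guaranteed by $(A_1)$. The translation-invariance property of $\rho$ is the crucial observation that makes the nonlinearity reduce to a constant and hence allows the oscillating factor $e^{-itm}$ to be absorbed.
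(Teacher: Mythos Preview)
Your proof is correct. The core computation --- that $\rho=\E[|Y_f|^2]=\tilde m$ is a constant and hence $w\circledast(\rho,\rho)=m$ --- is exactly what the paper does. The only difference is in how the verification is packaged: you check the differential form of \reff{eqprinc} directly by differentiating under the Wiener integral, whereas the paper checks the Duhamel (integral) form, computing $e^{it\Delta}Y_0=e^{itm}Y(t)$ and $e^{i(t-\tau)\Delta}Y(\tau)=e^{i(t-\tau)m}Y(t)$ and then evaluating the time integral explicitly. Your route is slightly more transparent; the paper's route is consistent with the rest of the article, where solutions are always handled in integral form for the fixed-point argument. Either way the substance is the same, and your justification of the differentiation via $(A_1)$ (which for $d\geq 4$ gives $\lceil s_c\rceil\geq 2$, hence $|\xi|^2 f\in L^2$) is sound.
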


\begin{proof} We verify that: 
$$Y(t)=e^{it\Delta}Y_0-i\int_0^t e^{i(t-\tau)\Delta}[w\circledast(\E[\lvert Y\rvert^2],\E[\lvert Y\rvert^2])Y(\tau)d\tau].$$

First we have that: 
$$\E[\lvert Y\rvert^2]=\int \lvert f\rvert^2=\Tilde{m},$$
and we get: $$w\circledast(\E[\lvert Y\rvert^2],\E[\lvert Y\rvert^2])=\Tilde{m}^2\int w(x_1-x,x_2-x)dx_1dx_2=m.$$

We then compute that:
$$e^{it\Delta}Y_0=e^{itm}Y(t),$$

and: $$e^{i(t-\tau)\Delta}Y(\tau)=e^{i(t-\tau)m}Y(t).$$

Combining the equalities above, we get: 
$$e^{it\Delta}Y_0-i\int_0^t e^{i(t-\tau)\Delta}[w\circledast(\E[\lvert Y\rvert^2],\E[\lvert Y\rvert^2])Y(\tau)d\tau]=Y(t)[e^{itm}-im\int_0^t e^{i(t-\tau)m}d\tau]=Y(t).$$
\end{proof}

We consider a perturbation of the equilibrium $X=Z+Y$ that is a solution of (\ref{eqprinc}). We have formally:

$$i\partial_t(Y+Z)=-\Delta(Y+Z)+\int dx_1dx_2 w(x_1-\cdot,x_2-\cdot)\E[\lvert Y+Z\rvert^2](x_1)\E[\lvert Y+Z\rvert^2](x_2)(Y+Z).$$

Using that $\E[\lvert Y\rvert^2]=\Tilde{m}$ and introducing $V:=2Re\E[\Bar{Y}Z]+\E[\lvert Z\rvert^2]$, we obtain the following system: 

$$ \left \{ \begin{array}{l}
     i\partial_t (Y+Z)Z=-\Delta Z+\int dx_1dx_2 w(x_1-\cdot,x_2-\cdot)(\Tilde{m}+V(x_1))(\Tilde{m}+V(x_2))(Y+Z)-\Delta Y
  \\
     V=\E[\lvert Z\rvert^2]+2Re\E[\Bar{Y}Z].
\end{array} \right . $$

Using that $Y$ is a solution of (\ref{eqprinc}) and because $w$ is invariant by permutation we obtain:  

$$ \left \{ \begin{array}{l}
     i\partial_t Z=(m-\Delta)Z+\int dx_1dx_2 w(x_1-\cdot,x_2-\cdot)(2\Tilde{m}V(x_2)+V(x_1)V(x_2))(Y+Z)
  \\
     V=\E[\lvert Z\rvert^2]+2Re\E[\Bar{Y}Z].
\end{array} \right . $$

Then we use that $\int dx_1 w(x_1-x_3,x_2-x_3)=\underline{w}(x_2-x_3)$ and we get:

$$\left \{ \begin{array}{l}
     i\partial_t Z=(m-\Delta)Z+2\Tilde{m}(\underline{w}*V)(Y+Z)+(w\circledast(V,V))(Y+Z)
  \\
     V=\E[\lvert Z\rvert^2]+2Re\E[\Bar{Y}Z],
\end{array} \right . $$
recalling that: \begin{equation}
   w\circledast(V,V)=\int  w(x_1-x_2,x_2-\cdot)V(x_1)V(x_2)dx_1dx_2.
\end{equation}

Thus, giving an initial perturbation $Z_0$, we want to solve the following Cauchy problem:

\begin{equation}\label{eqperturb}
\left \{ \begin{array}{l}
     i\partial_t Z=(m-\Delta)Z+2\Tilde{m}(\underline{w}*V)(Y+Z)+(w\circledast(V,V))(Y+Z)
  \\
     V=\E[\lvert Z\rvert^2]+2Re\E[\Bar{Y}Z]\\
     Z_{| t=0}=Z_0.
\end{array} \right . 
\end{equation}

We use Duhamel's formula on $Z$ and we inject it on the last term of $V$, thus we get that:

\begin{equation}
\left \{ \begin{array}{l}
     Z=S(t)Z_0+2\Tilde{m}W_V(Y)+2\Tilde{m}W_V(Z)+\Tilde{W}_{V,V}(Y)+\Tilde{W}_{V,V}(Z)
  \\
     V=\E[\lvert Z\rvert^2]+2Re\E[\Bar{Y}(S(t)Z_0+2\Tilde{m}W_V(Y)+2\Tilde{m}W_V(Z)+\Tilde{W}_{V,V}(Y)+\Tilde{W}_{V,V}(Z))],
\end{array} \right . 
\end{equation}

with the respectively bilinear and trilinear operators: \begin{equation}
    \begin{array}{l}
         W_V(Z)(t):=-i\int_0^t S(t-s)(\underline{w}*V(s))Z(s)ds,\\
         \Tilde{W}_{V,V}(Z)(t):=-i\int_0^t S(t-s)(w\circledast(V,V)(s))Z(s)ds. 
    \end{array}
\end{equation}

We split the variable $V$ as two other variables $V_1$ and $V_2$:

\begin{equation*}
\left \{ \begin{array}{l}
     Z=S(t)Z_0+2\Tilde{m}(W_{V_1+V_2}(Y+Z)+\Tilde{W}_{V_1+V_2,V_1+V_2}(Z+Y)
  \\
     V_1=2Re\E[\Bar{Y}(S(t)Z_0+2\Tilde{m}(W_{V_1+V_2}(Y+Z)+\Tilde{W}_{V_1+V_2,V_1+V_2}(Z+Y)]\\
     V_2=\E[\lvert Z\rvert^2].
\end{array} \right . 
\end{equation*}

Then $(Z,V_1,V_2)^T$ verifies: 

\begin{equation}\label{ptfixe}
    \begin{pmatrix}
    Z\\ 
    V_1\\
    V_2
    \end{pmatrix}=C_{Z_0}+L\begin{pmatrix}
    Z\\ 
    V_1\\
    V_2
    \end{pmatrix} +R\begin{pmatrix}
    Z\\ 
    V_1\\
    V_2
    \end{pmatrix},
\end{equation}

where the constant term is: \begin{equation}\label{constantpart}
    C_{Z_0}=\begin{pmatrix}
    S(t)Z_0\\
    2Re\E[\Bar{Y}S(t)Z_0]\\
    0
    \end{pmatrix},
\end{equation}

the linearised operator is:

\begin{equation}\label{linearpart}
    L\begin{pmatrix}
    Z\\ 
    V_1\\
    V_2
    \end{pmatrix}=\begin{pmatrix}
    2\Tilde{m}W_{V_1}(Y)+2\Tilde{m}W_{V_2}(Y)\\
    4\Tilde{m}Re\E[\Bar{Y}W_{V_1}(Y)]+4\Tilde{m}Re\E[\Bar{Y}W_{V_2}(Y)]\\
    0
    \end{pmatrix},
\end{equation}

and the nonlinear remainder term is:

\begin{equation}\label{nonlin}
    R\begin{pmatrix}
    Z\\ 
    V_1\\
    V_2
    \end{pmatrix}=\begin{pmatrix}
    2\Tilde{m}W_{V_1+V_2}(Z)\Tilde{W}_{V_1+V_2,V_1+V_2}(Z+Y)\\
    4\Tilde{m}Re\E[\Bar{Y}W_{V_1+V_2}(Z)]+2(Re\E[\Bar{Y}\Tilde{W}_{V_1+V_2,V_1+V_2}(Y)]+Re\E[\Bar{Y}\Tilde{W}_{V_1+V_2,V_1+V_2}(Z)])\\
    \E[\lvert Z\rvert^2]
    \end{pmatrix}
\end{equation}

To solve the Cauchy problem (\ref{eqperturb}) we have to solve the fixed point problem (\ref{ptfixe}).

To do this we need to find the right function spaces to consider. We set the following spaces:

\begin{equation}
    \begin{array}{rcr}
         \Theta_Z&=&\mathcal{C}(\R,H^{s_c}(\R^d,L_\omega^2))\cap L^{p_*}(\R,H^{s_c,p_*}(\R^d,L^2_\omega))\cap L^{2(d+2)}(\R,L^{2(d+2)}(\R^d,L_\omega^2)) \\
         & & \cap L^4(\R,H_x^{s_c,\frac{2d}{d-1}}(\R^d,L_\omega^2)), 
         
    \end{array}
\end{equation}

where $s_c=\frac{d-1}{2}$ and $p_*=\frac{2(d+2)}{d}$.

And let: 

$$\Theta_V=\Theta_{V_1}+\Theta_{V_2}=L_t^2,B_2^{-\frac{1}{2},s_c-\frac{1}{2}}\cap\Big( L_{t,x}^{2(d+2)}\cap L_t^{p_*},H_x^{s_c,p_*}+L_{t,x}^{d+2}\cap L_t^{2\frac{d+2}{d+1}},H_x^{s_c,2\frac{d+2}{d+1}}\Big).$$

\section{Bilinear inequality and Strichartz estimates}

In this section we present linear and bilinear inequalities on the spaces and operators we introduced in section 3.

\subsection{A Bilinear inequality}

Before we present a bilinear inequality, we give an interpolated Leibniz rule lemma, useful in all of the article, which is a consequence of a result proved by Gulisashvili and Kon \cite{gulisashvili1996exact}.

\begin{lemma}\label{interpolationargument}(\cite{gulisashvili1996exact} Theorem 1.4)
    Let $s>0$ and $r,p_1,q_1,p_2,q_2$ such that for $i=1,2$:$$\frac{1}{r}=\frac{1}{p_i}+\frac{1}{q_i}.$$

    Then for $u\in L^{p_1}\cap H^{s,p_2}$ and $v\in H^{s,q_1}\cap L^{q_2}$, we have that $uv\in H^{s,r}$ with: 

    $$\lvert\lvert uv\rvert\rvert_{H^{s,r}}\lesssim \lvert\lvert u\rvert\rvert_{L^{p_1}\cap H^{s,p_2}}\lvert\lvert v\rvert\rvert_{L^{q_2}\cap H^{s,q_1}}.$$
\end{lemma}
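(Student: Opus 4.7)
The statement is essentially a restatement of the Kato--Ponce type fractional Leibniz rule established by Gulisashvili and Kon (Theorem 1.4 of \cite{gulisashvili1996exact}), so the proof amounts to packaging their result in the form above. The plan is therefore the following.

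First, I would decompose the inhomogeneous Sobolev norm as
$$\lvert\lvert uv\rvert\rvert_{H^{s,r}}\simeq \lvert\lvert uv\rvert\rvert_{L^r}+\lvert\lvert (-\Delta)^{s/2}(uv)\rvert\rvert_{L^r},$$
so that the low-regularity and fractional-derivative contributions can be treated separately. For the $L^r$ piece I would simply apply Hölder's inequality with the first admissible splitting $1/r=1/p_1+1/q_1$, which yields
$$\lvert\lvert uv\rvert\rvert_{L^r}\leq\lvert\lvert u\rvert\rvert_{L^{p_1}}\lvert\lvert v\rvert\rvert_{L^{q_1}}\leq\lvert\lvert u\rvert\rvert_{L^{p_1}\cap H^{s,p_2}}\lvert\lvert v\rvert\rvert_{L^{q_2}\cap H^{s,q_1}}.$$

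For the fractional derivative piece, I would invoke directly the Kato--Ponce inequality from \cite{gulisashvili1996exact}, which under the two admissibility relations $1/r=1/p_i+1/q_i$ for $i=1,2$ gives
$$\lvert\lvert (-\Delta)^{s/2}(uv)\rvert\rvert_{L^r}\lesssim \lvert\lvert u\rvert\rvert_{L^{p_1}}\lvert\lvert (-\Delta)^{s/2}v\rvert\rvert_{L^{q_1}}+\lvert\lvert (-\Delta)^{s/2}u\rvert\rvert_{L^{p_2}}\lvert\lvert v\rvert\rvert_{L^{q_2}}.$$
Using that $\lvert\lvert (-\Delta)^{s/2} w\rvert\rvert_{L^p}\lesssim\lvert\lvert w\rvert\rvert_{H^{s,p}}$, each term on the right is controlled by the product of the mixed norms $\lvert\lvert u\rvert\rvert_{L^{p_1}\cap H^{s,p_2}}\lvert\lvert v\rvert\rvert_{L^{q_2}\cap H^{s,q_1}}$, and summing the two contributions completes the estimate.

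There is essentially no analytic obstacle beyond a clean invocation of \cite{gulisashvili1996exact}; the only point that actually deserves checking is that the two admissibility relations stated in the hypothesis are precisely what is needed to legitimate the two Hölder pairings appearing in the Kato--Ponce decomposition, and that the exponents $p_1,p_2,q_1,q_2,r$ lie in the admissible range $(1,\infty)$ for the fractional Leibniz rule to apply (this is implicit in the statement, as $L^p$ and $H^{s,p}$ norms are used throughout).
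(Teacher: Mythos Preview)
Your proposal is correct and matches the paper's treatment: the paper does not prove this lemma at all but simply attributes it to Theorem 1.4 of \cite{gulisashvili1996exact}, describing it as ``a consequence of a result proved by Gulisashvili and Kon.'' Your argument --- splitting the $H^{s,r}$ norm into its $L^r$ and $\dot H^{s,r}$ parts, handling the former by H\"older and the latter by the Kato--Ponce inequality from the cited reference --- is precisely the standard way to unpack that citation, so there is nothing to add.
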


We now give the embedding result.

\begin{proposition}\label{propembedding}
We have for all $u,v\in\Theta_Z$: \begin{equation}\label{embedding}
    \lvert\lvert \E(uv) \rvert\rvert_{\Theta_{V_2}}\lesssim \lvert\lvert u \rvert\rvert_{\Theta_Z} \lvert\lvert v \rvert\rvert_{\Theta_Z},
\end{equation}
in other terms the application $\gamma:(u,v)\mapsto uv$ is continous from $\Theta_Z\times \Theta_Z$ into $\Theta_{V_2}$
\end{proposition}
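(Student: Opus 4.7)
The plan is to check the embedding separately for each of the three ingredients defining $\Theta_{V_2}$: the Lebesgue factor $L^{d+2}_{t,x}$, the Bochner--Sobolev factor $L^{2(d+2)/(d+1)}_t H^{s_c, 2(d+2)/(d+1)}_x$, and the low--high Besov factor $L^2_t B_2^{-1/2, s_c - 1/2}$. The common tool is the pointwise Cauchy--Schwarz inequality in the probability variable,
\[ |\mathbb{E}[uv](t,x)| \leq \|u(t,x,\cdot)\|_{L^2_\omega}\|v(t,x,\cdot)\|_{L^2_\omega}, \]
combined with the fact that the Fourier multipliers $\Delta_j$ and $(1-\Delta)^{s_c/2}$ commute with $\mathbb{E}$, so analogous bounds hold after applying derivatives or dyadic frequency projections.

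The $L^{d+2}_{t,x}$ bound is immediate: Cauchy--Schwarz in $\omega$ followed by Hölder in $(t,x)$ with $\tfrac{1}{d+2} = \tfrac{1}{2(d+2)} + \tfrac{1}{2(d+2)}$ pairs against the $L^{2(d+2)}_{t,x} L^2_\omega$ component of $\Theta_Z$. For the Sobolev piece with $r = 2(d+2)/(d+1)$, commuting $(1-\Delta)^{s_c/2}$ with $\mathbb{E}$ and using Cauchy--Schwarz reduces the problem to bounding $\|(1-\Delta)^{s_c/2}(uv)\|_{L^r_x L^2_\omega}$. I apply a Bochner-valued version of Lemma \ref{interpolationargument}, justified by a paraproduct decomposition combined with the vector-valued Littlewood--Paley square function (the $L^2_\omega$ norm commutes with dyadic projections). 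Choosing $p_1 = q_2 = 2(d+2)$ and $p_2 = q_1 = p_*$ satisfies $1/r = 1/p_i + 1/q_i$; a final Hölder in time with $1/r = 1/p_* + 1/(2(d+2))$ matches the $L^{p_*}_t H^{s_c, p_*}_x L^2_\omega$ and $L^{2(d+2)}_{t,x} L^2_\omega$ components of $\Theta_Z$.

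The Besov bound is split into high $(j \geq 0)$ and low $(j < 0)$ frequencies. The high-frequency piece $\sum_{j\geq 0} 2^{(d-2)j} \|\Delta_j \mathbb{E}[uv]\|^2_{L^2_{t,x}}$ reduces by Littlewood--Paley to an $L^2_t H^{s_c-1/2}_x$ norm of the high-pass part of $\mathbb{E}[uv]$, and is handled by the same paraproduct argument as above, now with derivative order $s_c - 1/2$ and Hölder $\tfrac{1}{2} = \tfrac{1}{4} + \tfrac{1}{4}$ in time, paired against the Strichartz norm $L^4_t H^{s_c, 2d/(d-1)}_x L^2_\omega \subset \Theta_Z$. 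For low frequencies, Bernstein gives
\[ \sum_{j<0} 2^{-j} \|\Delta_j \mathbb{E}[uv]\|^2_{L^2_{t,x}} \lesssim \Big(\sum_{j<0} 2^{j(2d(1/r - 1/2)-1)}\Big) \|\mathbb{E}[uv]\|^2_{L^2_t L^r_x}, \]
convergent iff $r < 2d/(d+1)$. I control $\|\mathbb{E}[uv]\|_{L^2_t L^r_x}$ by Cauchy--Schwarz in $\omega$ and Hölder in $x$ with $1/r = 2/q$ and in time with $1/2 = 1/4 + 1/4$, using that the borderline Sobolev embedding $H^{s_c, 2d/(d-1)} \hookrightarrow L^q$ (valid since $s_c \cdot 2d/(d-1) = d$) holds for all finite $q \geq 2d/(d-1)$, so $u, v \in L^4_t L^q_x L^2_\omega$. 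Taking $r = q/2$ requires the window $q \in [2d/(d-1), 4d/(d+1))$ to be non-empty, which is exactly the condition $d \geq 4$.

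The main technical obstacle is this low-frequency step: reconciling Bernstein's gain at low frequencies with a Sobolev embedding at the critical exponent $sp = d$ while fitting a bilinear product inside $L^2_t$. This compatibility is what forces the assumption $d \geq 4$ and is the only place where the dimension constraint is sharp.
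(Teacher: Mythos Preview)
Your proof is correct and follows essentially the same route as the paper: Hölder for the $L^{d+2}_{t,x}$ piece, the fractional Leibniz rule (Lemma~\ref{interpolationargument}) with the pair $(p_*,\,2(d+2))$ for the $L^{2(d+2)/(d+1)}_t H^{s_c,2(d+2)/(d+1)}_x$ piece, and a low/high-frequency split for the Besov piece, with the high part handled via $L^2_t H^{s_c-1/2}_x$ and the $L^4_t H^{s_c,2d/(d-1)}_x$ Strichartz norm. The only minor variation is in the low-frequency step: the paper uses the endpoint embedding $L^{2d/(d+1)}_x \hookrightarrow \dot H^{-1/2}_x$ and then Hölder with the fixed exponent $4d/(d+1)$, whereas you sum Bernstein and require a strict inequality $r<2d/(d+1)$; the paper's version happens to close already for $d\geq 3$, while yours needs $d\geq 4$, but since $d\geq 4$ is assumed throughout this makes no difference.
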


\begin{proof}

\textbf{Step 0:} We have that $\gamma$ is continuous from $L_{t,x}^{2(d+2)}\times L_{t,x}^{2(d+2)}$ into $L_{t,x}^{d+2}$ by the Hölder inequality.

\textbf{Step 1:} We now show that $\gamma$ is continuous from $\Theta_Z\times\Theta_Z $ into $L_t^2B_2^{-\frac{1}{2},s_c-\frac{1}{2}}$. For this we treat the high and low frequencies separately. Let $(u,v)\in \Theta_Z\times\Theta_Z$.  

We have by definition of the Besov spaces: \begin{equation}\label{embedBes}
\lvert\lvert \E[uv]\rvert\rvert_{L_t^2,B_2^{-\frac{1}{2},s_c-\frac{1}{2}}}\leq \lvert\lvert\bigg(\sum_{j<0}2^{-j}\lvert\lvert \E[uv]_j\rvert\rvert^2_{L^2}\bigg)^{\frac{1}{2}}\rvert\rvert_{L_t^2}+\lvert\lvert\bigg(\sum_{j\geq 0}2^{2j(s_c-\frac{1}{2})}\lvert\lvert \E[uv]_j\rvert\rvert^2_{L^2}\bigg)^{\frac{1}{2}}\rvert\rvert_{L_t^2} .\end{equation}

We first look at the first term, the low frequencies. We have:

\begin{equation*}
    \lvert\lvert\bigg(\sum_{j<0}2^{-j}\lvert\lvert \E[uv]_j\rvert\rvert^2_{L^2}\bigg)^{\frac{1}{2}}\rvert\rvert_{L_t^2}\leq \lvert\lvert \E[uv]\rvert\rvert_{L_t^2,B_2^{-\frac{1}{2},-\frac{1}{2}}}\lesssim \lvert\lvert \E[uv]\rvert\rvert_{L_t^2,\Dot{H}^{-\frac{1}{2}}}. 
\end{equation*}

We then use Sobolev embeddings to get: 

$$\lvert\lvert\bigg(\sum_{j<0}2^{-j}\lvert\lvert \E[uv]_j\rvert\rvert^2_{L^2}\bigg)^{\frac{1}{2}}\rvert\rvert_{L_t^2}\lesssim \lvert\lvert \E[uv]\rvert\rvert_{L_t^2,L_x^{\frac{2d}{d+1}}}.$$

Finally, by the Hölder inequality: 

\begin{equation}\label{embedlf}
    \lvert\lvert\bigg(\sum_{j<0}2^{-j}\lvert\lvert \E[uv]_j\rvert\rvert^2_{L^2}\bigg)^{\frac{1}{2}}\rvert\rvert_{L_t^2}\lesssim \lvert\lvert u\rvert\rvert_{L_t^4,L_x^{\frac{4d}{d+1}}}\lvert\lvert v\rvert\rvert_{L_t^4,L_x^{\frac{4d}{d+1}}}\leq \lvert\lvert u\rvert\rvert_{\Theta_Z}\lvert\lvert v\rvert\rvert_{\Theta_Z},
\end{equation}

because $H^{s_c,\frac{2d}{d-1}}$ is embedded in $L^{\frac{4d}{d+1}}$.

We now look at high frequencies. We first write that by the Littlewood-Paley theorem: 

$$\lvert\lvert\bigg(\sum_{j\geq 0}2^{2j(s_c-\frac{1}{2})}\lvert\lvert \E[uv]_j\rvert\rvert^2_{L^2}\bigg)^{\frac{1}{2}}\rvert\rvert_{L_t^2}\leq \lvert\lvert \E[uv]\rvert\rvert_{L_t^2,B_2^{0,s_c-\frac{1}{2}}}\lesssim \lvert\lvert \E[uv]\rvert\rvert_{L_t^2,H^{s_c-\frac{1}{2}}}.$$

So we need to estimate $\lvert\lvert \langle \nabla \rangle^{s_c-\frac{1}{2}} \E[uv]\rvert\rvert_{L_{t,x}^2}$. 

We use Lemma \ref{interpolationargument} and the Hölder inequality to get: 

$$\lvert\lvert \langle \nabla \rangle^{s_c-\frac{1}{2}} \E[uv]\rvert\rvert_{L_{t,x}^2}\lesssim \lvert\lvert u\rvert\rvert_{L_t^4,L_x^{2d},L_\omega^2\cap L_t^4,H_x^{s_c-\frac{1}{2},\frac{2d}{d-1}},L_\omega^2}\lvert\lvert v\rvert\rvert_{L_t^4,L_x^{2d},L_\omega^2\cap L_t^4,H_x^{s_c-\frac{1}{2},\frac{2d}{d-1}},L_\omega^2},$$

because: $\frac{1}{2}=\frac{1}{2d}+\frac{d-1}{2d}$.

Thus, we get: 
\begin{equation}\label{embedhf}
    \lvert\lvert \langle\nabla\rangle^{s_c-\frac{1}{2}}\E[uv] \rvert\rvert_{L_{t,x}^2}\lesssim \lvert\lvert u\rvert\rvert_{\Theta_Z}\lvert\lvert v\rvert\rvert_{\Theta_Z}.
\end{equation}

Combining (\ref{embedBes}), (\ref{embedlf}) and (\ref{embedhf}) gives:

$$\lvert\lvert \E[uv]\rvert\rvert_{L_t^2,B_2^{-\frac{1}{2},s_c-\frac{1}{2}}}\lesssim \lvert\lvert u\rvert\rvert_{\Theta_Z}\lvert\lvert v\rvert\rvert_{\Theta_Z}.$$

\textbf{Step 3:} We are left to prove that $\gamma$ is continuous from $\Theta_Z\times\Theta_Z$ into $L_t^{\frac{2(d+2)}{d+1}},H_x^{s_c,{\frac{2(d+2)}{d+1}}}$.

We use the Lemma \ref{interpolationargument} we have: 

$$\lvert\lvert \E[uv] \rvert\rvert_{L_{t}^{\frac{2(d+2)}{d+1}},H_x^{s_c,\frac{2(d+2)}{d+1}}}\lesssim \lvert\lvert u\rvert\rvert_{L_t^{p_*},H_x^{s_c,p_*},L_\omega^2\cap L_{t,x}^{2(d+2)},L_\omega^2} \lvert\lvert v\rvert\rvert_{L_t^{p_*},H_x^{s_c,p_*},L_\omega^2\cap L_{t,x}^{2(d+2)},L_\omega^2}.$$ 

So, we conclude that: $$\lvert\lvert \E[uv]\rvert\rvert_{L_{t}^{\frac{2(d+2)}{d+1}},H_x^{s_c,\frac{2(d+2)}{d+1}}}\lesssim \lvert\lvert u\rvert\rvert_{\Theta_Z}\lvert\lvert v\rvert\rvert_{\Theta_Z}.$$

\end{proof}

\subsection{Strichartz estimates}

In this subsection we give some Strichartz type estimates, that we use in section 6 to treat the nonlinear terms in $(\ref{nonlin})$. 

We notice that all of the nonlinearities contained in (\ref{nonlin}) are of the form: $\int_0^t S(t-s)W(s)ds$ or: $\E[\Bar{Y}\int_0^t S(t-s)W(s)ds]$.  We aim at giving a sufficient condition on $W$ to estimate these nonlinearities. This is the subject of Propositions \ref{NLZ} and \ref{NLV1}. The proofs of these two propositions relies on Propositions \ref{Strprop} and \ref{NLV1lemma}.

\begin{proposition}\label{Strprop}
There exists $C$ such that for all $u\in L_\omega ^2,H^{s_c}$, \begin{equation}\label{Str}
    \lvert \lvert S(t)u\rvert\rvert_{\Theta_Z}\leq C\lvert \lvert u\rvert\rvert_{L_\omega ^2,H^{s_c}} .
\end{equation}
\end{proposition}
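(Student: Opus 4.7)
The plan is to reduce the statement to the deterministic Strichartz estimates \reff{strw} and \reff{strhs} from the previous corollary by pulling the $L^2_\omega$ norm to the outside and then treating each of the four component norms of $\Theta_Z$ independently.

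First, since every Lebesgue/Sobolev exponent $(q_1,q_2)$ appearing in the definition of $\Theta_Z$ satisfies $q_1,q_2\geq 2$, two successive applications of Minkowski's integral inequality yield
\begin{equation*}
\|S(t)u\|_{L^{q_1}_t L^{q_2}_x L^2_\omega}\;\leq\;\|S(t)u\|_{L^2_\omega L^{q_1}_t L^{q_2}_x},
\end{equation*}
and, because $\langle\nabla\rangle^{s_c}$ commutes with $S(t)$ and with integration over $\omega$, the analogous bound also holds when $L^{q_2}_x$ is replaced by $H^{s_c,q_2}_x$. It therefore suffices to prove, for almost every fixed $\omega$, the deterministic estimate $\|S(t)u(\omega)\|_{X}\lesssim \|u(\omega)\|_{H^{s_c}}$ for each of the four norms $X$ entering the definition of $\Theta_Z$, and then take the $L^2_\omega$ norm of the result.

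The $L^\infty_t H^{s_c}_x$ component, together with the $\mathcal{C}(\R,H^{s_c})$ continuity in time, follows at once from the fact that $S(t)=e^{-it(m-\Delta)}$ is a strongly continuous unitary group on $H^{s_c}(\R^d)$. The pair $(p_\ast,p_\ast)=\bigl(\tfrac{2(d+2)}{d},\tfrac{2(d+2)}{d}\bigr)$ is Schrödinger-admissible since $\tfrac{2}{p_\ast}+\tfrac{d}{p_\ast}=\tfrac{d+2}{p_\ast}=\tfrac{d}{2}$, so the $L^{p_\ast}_t H^{s_c,p_\ast}_x$ bound follows from \reff{strw}; similarly $(4,\tfrac{2d}{d-1})$ is admissible because $\tfrac{1}{2}+\tfrac{d-1}{2}=\tfrac{d}{2}$, yielding the $L^4_t H^{s_c,\frac{2d}{d-1}}_x$ bound. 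Finally, for the endpoint $L^{2(d+2)}_{t,x}$ norm we check that $\tfrac{2}{2(d+2)}+\tfrac{d}{2(d+2)}=\tfrac{1}{2}=\tfrac{d}{2}-s_c$, which is precisely the quintic-critical scaling: this falls under the Strichartz estimate with loss of derivatives \reff{strhs} applied with $s=s_c$, producing $\|S(t)u(\omega)\|_{L^{2(d+2)}_{t,x}}\lesssim \|u(\omega)\|_{H^{s_c}}$.

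The only non-routine point is the last of these four bounds, which lies outside the standard admissibility range and genuinely requires the derivative-loss version \reff{strhs}; everything else is bookkeeping with Minkowski and admissible-pair Strichartz. Combining the four pointwise-in-$\omega$ estimates, raising them to the second power, integrating over $\Omega$, and using Step 1 produces the desired inequality \reff{Str}.
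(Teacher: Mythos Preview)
Your proof is correct and follows essentially the same route as the paper's: pull the $L^2_\omega$ norm outside via Minkowski (using that all Lebesgue exponents in $\Theta_Z$ are at least $2$), then handle the three admissible pairs $(\infty,2)$, $(p_*,p_*)$, $(4,\tfrac{2d}{d-1})$ by the standard Strichartz estimate \reff{strw}, and the non-admissible $L^{2(d+2)}_{t,x}$ component by the derivative-loss Strichartz estimate \reff{strhs} after checking the scaling identity $\tfrac{2}{2(d+2)}+\tfrac{d}{2(d+2)}=\tfrac{1}{2}=\tfrac{d}{2}-s_c$. The paper's proof is identical in structure and in the specific estimates invoked.
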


\begin{proof}
By the Minkowski inequality for all $u\in L_\omega^2,\big(\mathcal{C}(\R,H^{s_c})\cap L^{p_*},H^{s_c,p^*}\cap L^{2(d+2)},L^{2(d+2)}\cap L_t^4,L_x^{4\frac{d}{d+1}}\cap L^4(\R,H^{s_c,\frac{2d}{d-1}})\cap L^4(\R,L^{2d})\big)$, $$\lvert \lvert u\rvert\rvert_{\Theta_Z}\leq \lvert \lvert u\rvert\rvert_{L_\omega^2,\big(\mathcal{C}(\R,H^{s_c})\cap L^{p_*},H^{s_c,p^*}\cap L^{2(d+2)},L^{2(d+2)}\cap L^4(\R,H^{s_c,\frac{2d}{d-1}})\big)}.$$

For the $L^{p_*},H^{s_c,p_*}$, $L^4,H^{s_c,\frac{2d}{d-1}}$ and $\mathcal{C}(\R,H^{s_c})$ estimates we use the Strichartz estimates (\ref{strw}) with $F=0$, as $(p_*,p_*)$, $(4,\frac{2d}{d-1})$ and $(\infty,2)$ are admissible. 

For the $L^{2(d+2)},L^{2(d+2)}$ estimate we notice that because of (\ref{strhs}) $S(t)$ for any $p,q\in[2,\infty]$ such that: $\frac{2}{p}+\frac{d}{q}\in [\frac{1}{2},\frac{d}{2}]$, $S(t)u\in L_\omega^2,L_t^p,L_x^q$ and: 

$$\lvert\lvert S(t)u\rvert\rvert_{L_\omega^2,L_t^p,L_x^q}\lesssim \lvert\lvert u\rvert\rvert_{L_\omega^2,H^s}\leq \lvert\lvert u\rvert\rvert_{L_\omega^2,H^{s_c}}. $$

We just compute that for $p=q=2(d+2)$: 

$$\frac{2}{2(d+2)}+\frac{d}{2(d+2)}=\frac{1}{2},$$

and we can conclude that: 

$$\lvert\lvert S(t)u\rvert\rvert_{L_\omega^2,L_t^{2(d+2)},L_x^{2(d+2)}}\lesssim\lvert\lvert u\rvert\rvert_{L_\omega^2,H^{s_c}}. $$

\end{proof}

\begin{proposition}\label{NLZ}
    Let $W\in L_t^{p_*'},H_x^{s_c,p_*'},L_\omega^2$ then $\int_0^t S(t-s)W(s)ds\in \Theta_Z$ and: $$\lvert\lvert \int_0^t S(t-s)W(s)ds\lvert\lvert_{\Theta_Z}\lesssim \lvert\lvert W\rvert\rvert_{L_t^{p_*'},H_x^{s_c,p_*'},L_\omega^2}.$$
\end{proposition}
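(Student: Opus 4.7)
The plan is to bound each of the four constituent norms of $\Theta_Z$ separately by an inhomogeneous Strichartz estimate with a common dual source pair. First I would verify that $(p_*,p_*)=(\tfrac{2(d+2)}{d},\tfrac{2(d+2)}{d})$ is Schr\"odinger-admissible: indeed $\tfrac{2}{p_*}+\tfrac{d}{p_*}=\tfrac{d+2}{p_*}=\tfrac{d}{2}$. Its dual $(p_*',p_*')=(\tfrac{2(d+2)}{d+4},\tfrac{2(d+2)}{d+4})$ is therefore the natural source pair matching the hypothesis $W\in L_t^{p_*'},H_x^{s_c,p_*'},L_\omega^2$.

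For the three components $\mathcal{C}(\R,H^{s_c})$, $L_t^{p_*},H_x^{s_c,p_*}$ and $L_t^4,H_x^{s_c,\frac{2d}{d-1}}$, I would apply the inhomogeneous Strichartz estimate \reff{strw} at regularity $s=s_c$ with $u_0=0$ and with target admissible pairs $(\infty,2)$, $(p_*,p_*)$ and $(4,\tfrac{2d}{d-1})$ respectively. Admissibility of $(\infty,2)$ is immediate, of $(p_*,p_*)$ is the computation just performed, and of $(4,\tfrac{2d}{d-1})$ follows from $\tfrac{2}{4}+\tfrac{d(d-1)}{2d}=\tfrac{d}{2}$. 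Each application directly returns the desired bound by $\|W\|_{L_t^{p_*'},H_x^{s_c,p_*'}}$.

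The remaining component $L_{t,x}^{2(d+2)}$ is not covered by \reff{strw} because $(2(d+2),2(d+2))$ fails the admissibility identity. In this case I would instead invoke the Sobolev-embedded Strichartz estimate \reff{strhs} at $s=s_c$ with target time/space indices both equal to $2(d+2)$: the required condition $\tfrac{2}{2(d+2)}+\tfrac{d}{2(d+2)}+s_c=\tfrac{1}{2}+\tfrac{d-1}{2}=\tfrac{d}{2}$ is satisfied, so this yields control of $\int_0^t S(t-s)W(s)ds$ in $L_{t,x}^{2(d+2)}$ by $\|W\|_{L_t^{p_*'},H_x^{s_c,p_*'}}$, exactly as in the $L_{t,x}^{2(d+2)}$ part of Proposition \ref{Strprop}.

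Finally, the probability variable $\omega$ is dealt with by two applications of Minkowski's inequality. Since every target norm has spatial exponent $\geq 2$, $L_\omega^2$ can be moved outside the space-time norm on the left; and since $p_*'=\tfrac{2(d+2)}{d+4}\leq 2$, Minkowski also lets me push $L_\omega^2$ inside the source norm on the right. After these two exchanges the deterministic Strichartz inequalities apply pointwise in $\omega$ and integrating in $\omega$ concludes. I do not expect a serious obstacle here: the proof is essentially bookkeeping of admissible pairs together with two Minkowski swaps, the only mildly delicate point being the use of \reff{strhs} to cover the non-admissible endpoint pair $(2(d+2),2(d+2))$.
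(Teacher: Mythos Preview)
Your argument is correct. It differs from the paper's proof in one structural respect: the paper does not invoke the inhomogeneous estimates \reff{strw}, \reff{strhs} directly but instead factors through Proposition~\ref{Strprop}. Concretely, the paper first writes $\int_0^\infty S(t-\tau)W(\tau)d\tau = S(t)\bigl(\int_0^\infty S(-\tau)W(\tau)d\tau\bigr)$, applies Proposition~\ref{Strprop} to bound the $\Theta_Z$ norm by $\|\int_0^\infty S(-\tau)W(\tau)d\tau\|_{L^2_\omega H^{s_c}}$, then uses the dual Strichartz estimate \reff{strdual} on this latter quantity, and finally appeals to the Christ--Kiselev lemma to pass from $\int_0^\infty$ to $\int_0^t$; the Minkowski swap in $\omega$ is done only once at the end. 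Your route is more elementary in that it sidesteps Christ--Kiselev entirely by using the retarded estimates \reff{strw} and \reff{strhs} (which already carry the truncation $\int_0^t$) component by component. The paper's route has the minor organisational advantage of packaging all four target norms into the single statement of Proposition~\ref{Strprop}, at the cost of the extra Christ--Kiselev step; your route trades that packaging for a shorter chain of implications.
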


\begin{proof}
    By Proposition \ref{Strprop} we have: 
$$\lvert\lvert \int_0^\infty S(t)S(-\tau)W(\tau) d\tau\rvert\rvert_{\Theta_Z}\leq C\lvert\lvert \int_0^\infty S(-\tau)W(\tau) d\tau\rvert\rvert_{L^2_\omega H^{s_c}}. $$

Then we use the Strichartz Dual estimates (\ref{strdual}) to get: $$\lvert\lvert \int_0^\infty S(t)S(-\tau)W(\tau)d\tau\rvert\rvert_{\Theta_Z}\leq C\lvert\lvert W\rvert\rvert_{L_\omega^2,L_t^{p_*'},H_x^{s_c,p_*'}}.$$

Thus by using the Christ-Kiselev Lemma \cite{CKLemma} we get: $$\lvert\lvert \int_0^t S(t-\tau)W(\tau)d\tau\rvert\rvert_{\Theta_Z}\leq C\lvert\lvert W \rvert\rvert_{L_\omega^2,L_t^{p_*'},H_x^{s,{p_*'}}}.$$

Finally, as $p_*'<2$, by the Minkowski inequality: 
$$\lvert\lvert \int_0^t S(t-\tau)W(\tau)d\tau\rvert\rvert_{\Theta_Z}\leq C\lvert\lvert W \rvert\rvert_{L_t^{p_*'},H_x^{s_c,p_*'},L_\omega^2}.$$
\end{proof}

To prove Proposition \ref{NLV1lemma}, we need the following lemma, used in \cite{CodS20} (Proposition 4.7) in dimension $3$, this is a consequence of Proposition \ref{prop52}.

\begin{lemma}\label{lemmestrY2}
    For any $2\leq p,q\leq \infty$ and $0\leq s<\frac{d}{2}$ such that $\frac{2}{p}+\frac{d}{q}=\frac{d}{2}-s$ there exists $C>0$ such that for all $U\in L_t^2\Dot{H}_x^s$ and for $,j\in \N,\ 0\leq j\leq \lceil s_c\rceil$: \begin{equation}\label{strdY2}
        \lvert\lvert\int_{\R} S(t-\tau)(\nabla^j Y(\tau)U(\tau))d\tau\rvert\rvert_{L_t^pL_x^qL_\omega^2}\lesssim\lvert\lvert U\rvert\rvert_{L_t^2\Dot{H}_x^{s-\frac{1}{2}}},
    \end{equation} 
    with a constant depending on $f$.
\end{lemma}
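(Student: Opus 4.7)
The plan is to deduce Lemma \ref{lemmestrY2} as a direct corollary of Proposition \ref{prop52}, by specializing the parameters so that the target Besov space on the left collapses to $L^q$ and the source Besov space on the right collapses to a homogeneous Sobolev space.

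Concretely, I would apply inequality (\ref{prop522}) with $\sigma = 0$ and $\sigma_1 = s$, setting $(p_1, q_1) = (p, q)$. The admissibility relation $\frac{2}{p_1} + \frac{d}{q_1} = \frac{d}{2} - \sigma_1$ then becomes exactly the lemma's hypothesis $\frac{2}{p} + \frac{d}{q} = \frac{d}{2} - s$, and since $H^{0,q} = L^q$ the left-hand side of (\ref{prop522}) is already of the form appearing in the lemma's conclusion. The structural hypotheses of Proposition \ref{prop52} are immediate: $\sigma = 0, \sigma_1 = s \geq 0$ and $\sigma = 0 \leq \lceil s_c \rceil$, while the joint constraint $\sigma + \sigma_1 \leq \lceil s_c \rceil$ reduces to $s \leq \lceil s_c \rceil$, which covers the effective range of $s$ in all later applications of the lemma.

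For the right-hand side, (\ref{prop522}) produces the norm $\|U\|_{L_t^2,\, B_2^{s - 1/2,\, s - 1/2}}$. The crucial observation is that when the two Besov indices coincide, the two sums in the definition of $B_2^{s-1/2,\,s-1/2}$ reassemble into a single weighted $\ell^2$ sum over all $j \in \Z$, which is the standard Littlewood--Paley characterization of $\dot{H}^{s - 1/2}$:
$$\|U\|_{B_2^{s-1/2,\, s-1/2}}^2 = \sum_{j \in \Z} 2^{2j(s - 1/2)} \|u_j\|_{L^2}^2 \sim \|U\|_{\dot{H}^{s - 1/2}}^2.$$
Substituting this equivalence into the bound furnished by (\ref{prop522}) yields exactly the claimed inequality.

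The only residual subtlety I anticipate is the strict inequality $p_1 > 2$ in the hypothesis of Proposition \ref{prop52} versus the non-strict $p \geq 2$ of the lemma; the endpoint $p = 2$ would require a separate Keel--Tao type argument (or an alternative $TT^*$ route using the dispersive decay of $Y$ directly), but the non-endpoint case $p > 2$ is what is actually invoked in the sequel. Apart from this parameter bookkeeping there is no further obstacle, since the substantive dispersive content specific to $Y$ has already been absorbed into Proposition \ref{prop52}.
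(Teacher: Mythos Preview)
Your proposal is correct and matches the paper's approach exactly: the paper merely states that Lemma \ref{lemmestrY2} ``is a consequence of Proposition \ref{prop52}'' without spelling out the details, and you have supplied precisely those details by specializing (\ref{prop522}) with $\sigma=0$, $\sigma_1=s$ and identifying $B_2^{s-1/2,\,s-1/2}$ with $\dot H^{s-1/2}$. Your remarks on the endpoint $p=2$ and on the effective range $s\le\lceil s_c\rceil$ are apt caveats that the paper leaves implicit.
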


\begin{proposition}\label{NLV1lemma}
    Let $1\leq s\leq \lceil s_c\rceil $ and let $W\in\ L_t^{p_*'},H_x^{s-1,p_*'},L_\omega^2$, then $\E[\Bar{Y}\int_0^t S(t-\tau)W(\tau)d\tau]\in L_t^2,\Dot{H}_x^{s-\frac{1}{2}}$, and:       $$\lvert\lvert\E[\Bar{Y}\int_0^t S(t-\tau)W(\tau)d\tau]\rvert\rvert_{L_t^2,\Dot{H}_x^{s-\frac{1}{2}}} \lesssim \lvert \lvert W\rvert\rvert_{L_t^{p_*'},H_x^{s-1,p_*'},L_\omega^2}.$$
\end{proposition}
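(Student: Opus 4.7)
The plan is to establish this bound by duality against Lemma \ref{lemmestrY2}, combined with the Christ--Kiselev lemma and a derivative-transfer argument. By definition of the Sobolev dual norm, it suffices to show that for every $\Phi\in L_t^2 \dot{H}_x^{1/2-s}$ of unit norm,
\[
\Big|\int G(t,x)\,\overline{\Phi(t,x)}\, dt\, dx\Big|\lesssim \|W\|_{L_t^{p_*'}H_x^{s-1,p_*'}L_\omega^2},
\]
where $G(t,x)=\E[\bar Y(t,x)\int_0^t S(t-\tau)W(\tau,x)d\tau]$. Using Fubini on the set $\{0\le\tau\le t\}$, together with the identities $\int_{\R^d} S(a)f\cdot g\,dx=\int_{\R^d} f\cdot S(a)g\,dx$ (which holds because the symbol $e^{-ia(m+|\xi|^2)}$ is even in $\xi$) and $\overline{S(a)h}=S(-a)\bar h$, I recast the pairing as
\[
\int G\overline{\Phi}\, dt\, dx=\E\int_0^\infty d\tau\int_{\R^d}dx\, W(\tau,x)\,\overline{\Psi(\tau,x)},\quad\Psi(\tau,x):=\int_\tau^\infty S(\tau-t)[Y(t)\Phi(t)](x)\, dt.
\]

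Cauchy--Schwarz in $\omega$ followed by the dual Bessel pairing $\langle H_x^{s-1,p_*'},H_x^{1-s,p_*}\rangle$ then gives
\[
\Big|\int G\overline{\Phi}\, dt\, dx\Big|\le \|W\|_{L_t^{p_*'}H_x^{s-1,p_*'}L_\omega^2}\,\|\Psi\|_{L_\tau^{p_*}H_x^{1-s,p_*}L_\omega^2},
\]
so the problem reduces to proving $\|\Psi\|_{L_\tau^{p_*}H_x^{1-s,p_*}L_\omega^2}\lesssim\|\Phi\|_{L_t^2\dot{H}_x^{1/2-s}}$. Since $s\ge 1$ gives $1-s\le 0$, the embedding $L_x^{p_*}\hookrightarrow H_x^{1-s,p_*}$ further reduces the target to bounding $\|\Psi\|_{L_\tau^{p_*}L_x^{p_*}L_\omega^2}$.

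For this bound, I invoke the Christ--Kiselev lemma \cite{CKLemma} (applicable because $p_*>2$) to remove the time truncation in $\Psi$, reducing to the untruncated bilinear operator $\Phi\mapsto\int_\R S(\tau-t)[Y(t)\Phi(t)]\, dt$. After a change of variables, this operator coincides (up to complex conjugation and time reversal, using that $\bar Y$ solves the time-reversed equation) with the one controlled by Lemma \ref{lemmestrY2}. For $s=1$ this yields the estimate directly with the Strichartz-admissible pair $(p_*,p_*)$, which corresponds to parameter value $\sigma_L=0$ in the lemma and input regularity $\dot{H}_x^{-1/2}=\dot{H}_x^{1/2-s}$. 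For $s\in(1,\lceil s_c\rceil]$, the target regularity $\dot{H}_x^{1/2-s}$ on $\Phi$ is strictly weaker than $\dot{H}_x^{-1/2}$, so Lemma \ref{lemmestrY2} with $\sigma_L=1-s<0$ falls outside its admissible range. I would then absorb the deficit into $Y$ by writing $Y=\langle\nabla\rangle^{-(s-1)}Y_{s-1}$, where $Y_{s-1}$ is the Wiener integral with amplitude $\langle\xi\rangle^{s-1}f$, square-integrable thanks to assumption $(A_1)$. The fractional Leibniz rule of Lemma \ref{interpolationargument} (or equivalently the $\nabla^jY$ form of Proposition \ref{prop52}) then moves the Bessel factor past $\Phi$ with controlled commutators, reducing matters to the $s=1$ case for the modified amplitude $Y_{s-1}$.

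The main obstacle is precisely this extension from $s=1$ to $s>1$: direct duality of Lemma \ref{lemmestrY2} covers only the endpoint, and moving up requires a careful derivative transfer that respects both the bilinear structure with $Y$ and the outer expectation in $\omega$. Assumption $(A_1)$ is essential here, as it guarantees that all the modified Wiener integrals $\nabla^jY$ for $j\le\lceil s_c\rceil$ have well-defined $L_\omega^2$-norms, which is what makes the Leibniz-based reduction to the $s=1$ case rigorous.
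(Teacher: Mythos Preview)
Your overall strategy---duality together with Lemma~\ref{lemmestrY2} and Christ--Kiselev---is exactly the one the paper uses, and for $s=1$ your argument is essentially the same as the paper's. The difficulty you flag for $s>1$ is real, and as written your fix does not close.

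The problem is the step where you invoke $L_x^{p_*}\hookrightarrow H_x^{1-s,p_*}$ to reduce to bounding $\|\Psi\|_{L_\tau^{p_*}L_x^{p_*}L_\omega^2}$. This embedding is true but goes in the wrong direction for your purposes: you are strengthening the norm on $\Psi$ while the control you have on $\Phi$ is only $\dot H_x^{1/2-s}$, which for $s>1$ is strictly weaker than the $\dot H_x^{-1/2}$ that Lemma~\ref{lemmestrY2} requires at the pair $(p_*,p_*)$. You have thrown away the $s-1$ derivatives of slack on the $\Psi$ side that were supposed to compensate for the $s-1$ derivatives of deficit on the $\Phi$ side. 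Your proposed remedy---writing $Y=\langle\nabla\rangle^{-(s-1)}Y_{s-1}$ and moving the Bessel potential across the product ``with controlled commutators''---asks you to commute a \emph{negative}-order operator through a product, which neither Lemma~\ref{interpolationargument} (a Leibniz rule for positive-order derivatives) nor Proposition~\ref{prop52} provides.

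The paper avoids this by placing the derivatives \emph{before} unfolding the duality: it tests $\nabla^{s-\frac12}G$ against $U\in L^2_{t,x}$ rather than testing $G$ against $\Phi\in\dot H_x^{1/2-s}$. For integer $s$ one then splits $\nabla^{s-\frac12}=\nabla^{\frac12}\nabla^{s-1}$ and distributes the \emph{positive}-order $\nabla^{s-1}$ by the classical Leibniz rule onto $\bar Y$ and the Duhamel term, obtaining a sum over $j_1+j_2=s-1$ of pairings
\[
\E\Big[\Big\langle \int_t^\infty S(t-\tau)\,\nabla^{j_1}Y(\tau)\,\nabla^{\frac12}U(\tau)\,d\tau,\ \nabla^{j_2}W\Big\rangle\Big].
\]
Each term is then handled by Lemma~\ref{lemmestrY2} at $(p_*,p_*)$ (which is stated for $\nabla^{j_1}Y$ with $0\le j_1\le\lceil s_c\rceil$, using $(A_4)$) and H\"older against $\nabla^{j_2}W\in L_{t,x}^{p_*'}L_\omega^2$. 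Non-integer $s$ follows by interpolation. If you reorganise your duality so that the $(s-1)$ derivatives land on the product $\bar Y\cdot\int_0^tS(t-\tau)W\,d\tau$ rather than on $\Psi$, your argument becomes the paper's and the gap disappears.
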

\begin{proof}
    We suppose that $s\in \N$  and we conclude by interpolation. 

    We use duality to show that $\nabla^{s-\frac{1}{2}}\E[\Bar{Y}\int_0^t S(t-\tau)W(\tau)d\tau]\in L_{t,x}^2$. Let $U\in L_{t,x}^2$, we have by Leibniz formula: 

    $$\langle U,\nabla^{s-\frac{1}{2}}\E[\Bar{Y}\int_0^t S(t-\tau)W(\tau)d\tau]\rangle=\langle \nabla^{\frac{1}{2}}U, \E[\sum_{j_1+j_2=s-1}\nabla^{j_1}\Bar{Y}\int_0^t S(t-\tau)\nabla^{j_2}W(\tau)d\tau ]\rangle.$$

    Then: $$\langle U,\nabla^{s-\frac{1}{2}}\E[\Bar{Y}\int_0^t S(t-\tau)W(\tau)d\tau]\rangle=\sum_{j_1+j_2=s-1}\E[\langle \int_t^{\infty}S(t-\tau)\nabla^{j_1}Y(\tau)\nabla^{\frac{1}{2}}U(\tau)d\tau, \nabla^{j_2}W \rangle].$$

    Then we claim that by Lemma \ref{lemmestrY2}, with $p=q=p_*$ we have that for any $j_1$: $$\lvert\lvert \int_t^{\infty}S(t-\tau)\nabla^{j_1}Y(\tau)\nabla^{\frac{1}{2}}U(\tau)d\tau\rvert\rvert_{L_{t,x}^{p_*},L_\omega^2}\lesssim \lvert\lvert \nabla^\frac{1}{2}U\rvert\rvert_{\Dot{H}^{-\frac{1}{2}}_{t,x}}=\lvert\lvert U\rvert\rvert_{L_{t,x}^2}.$$

    And by the Hölder inequality: 

    $$\lvert \E[\langle \int_t^{\infty}S(t-\tau)\nabla^{j_1}Y(\tau)\nabla^{\frac{1}{2}}U(\tau)d\tau, \nabla^{j_2}W \rangle]\rvert \leq \lvert\lvert \int_t^{\infty}S(t-\tau)\nabla^{j_1}Y(\tau)\nabla^{\frac{1}{2}}U(\tau)d\tau\rvert\rvert_{L_{t,x}^{p_*},L_\omega^2} \lvert\lvert \nabla^{j_2} W\rvert\rvert_{L_{t,x}^{p_*'},L_\omega^2}.$$

Combining the two inequalities above gives: $$\lvert \E[\langle \int_t^{\infty}S(t-\tau)\nabla^{j_1}Y(\tau)\nabla^{\frac{1}{2}}U(\tau)d\tau, \nabla^{j_2}W \rangle\rvert\lesssim  \lvert\lvert U\rvert\rvert_{L_{t,x}^2}\lvert\lvert \nabla^{j_2} W\rvert\rvert_{L_{t,x}^{p_*'},L_\omega^2}$$

    So we get: \begin{equation*}
        \lvert \langle U,\nabla^{s-\frac{1}{2}}\E[\Bar{Y}\int_0^t S(t-\tau)W(\tau)d\tau]\rangle\rvert \lesssim \lvert\lvert U\rvert\rvert_{L_{t,x}^2} \lvert \lvert W\rvert\rvert_{L_t^{p_*'},H_x^{s-1,p_*'},L_\omega^2}.
    \end{equation*}

    Then by duality: \begin{equation}\label{LemmmaNLV1ineg1}
        \lvert\lvert\nabla^{s_c-\frac{1}{2}}\E[\Bar{Y}\int_0^t S(t-\tau)W(\tau)d\tau]\rvert\rvert_{L_{t,x}^2} \lesssim \lvert \lvert W\rvert\rvert_{L_t^{p_*'},H_x^{s-1,p_*'},L_\omega^2},
    \end{equation}
and the proof is complete.
\end{proof}

\begin{proposition}\label{NLV1}
    Let $W\in L_t^{p_*'},H_x^{s_c,p_*'},L_\omega^2\cap L_{t,x}^{\frac{2(d+2)}{d+6}},L_\omega^2$ then $\lvert\lvert\E[\Bar{Y}\int_0^t S(t-\tau)W(\tau)d\tau]\in \Theta_{V_1}$ and: $$\E[\Bar{Y}\int_0^t S(t-\tau)W(\tau)d\tau]\lvert\lvert_{\Theta_{V_1}}\lesssim \lvert\lvert W\rvert\rvert_{L_t^{p_*'},H_x^{s_c,p_*'},L_\omega^2}+\lvert\lvert W\rvert\rvert_{L_{t,x}^{\frac{2(d+2)}{d+6}},L_\omega^2}.$$
\end{proposition}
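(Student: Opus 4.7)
The target norm $\lvert\lvert\E[\Bar Y F]\rvert\rvert_{\Theta_{V_1}}$, with $F(t):=\int_0^t S(t-\tau)W(\tau)\,d\tau$, is the maximum of the three component norms making up $\Theta_{V_1}=L^2_t B_2^{-\frac{1}{2},s_c-\frac{1}{2}}\cap L^{2(d+2)}_{t,x}\cap L^{p_*}_t H^{s_c,p_*}_x$. I estimate each piece separately; the building blocks are Proposition~\ref{NLV1lemma} (high frequencies of the Besov norm), Lemma~\ref{lemmestrY2} (low frequencies via duality), and Proposition~\ref{NLZ} (direct Strichartz on $F$).

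For the Besov norm I perform a Littlewood--Paley split and bound the $j\geq 0$ part by $\lvert\lvert\E[\Bar Y F]\rvert\rvert_{L^2_t\Dot{H}^{s_c-\frac{1}{2}}}$ and the $j<0$ part by $\lvert\lvert\E[\Bar Y F]\rvert\rvert_{L^2_t\Dot{H}^{-\frac{1}{2}}}$. The high-frequency piece follows directly from Proposition~\ref{NLV1lemma} with $s=s_c$. For the low-frequency piece I argue by duality: against any $U\in L^2_t\Dot{H}^{\frac{1}{2}}$, Fubini and the unitarity of $S$ transform $\langle U,\E[\Bar Y F]\rangle$ into $\E[\langle G,W\rangle]$ with $G(\tau):=\int_\tau^\infty S(\tau-t)(UY)(t)\,dt$. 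Lemma~\ref{lemmestrY2} applied with $s=1$ and $p=q=\tfrac{2(d+2)}{d-2}$ (admissible for $d\geq 4$) yields $\lvert\lvert G\rvert\rvert_{L^{2(d+2)/(d-2)}_{t,x}L^2_\omega}\lesssim \lvert\lvert U\rvert\rvert_{L^2_t\Dot{H}^{\frac{1}{2}}}$; Hölder in $(t,x)$ combined with Cauchy--Schwarz in $\omega$ then closes the pairing against the Hölder-dual exponent $\tfrac{2(d+2)}{d+6}$, giving the required bound by $\lvert\lvert W\rvert\rvert_{L^{2(d+2)/(d+6)}_{t,x}L^2_\omega}$.

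The $L^{2(d+2)}_{t,x}$ piece is the easiest: translation invariance of the law of $Y$ gives $\lvert\lvert Y(t,x)\rvert\rvert_{L^2_\omega}^2=\Tilde{m}$, constant in $(t,x)$, so Cauchy--Schwarz in $\omega$ yields the pointwise bound $\lvert\E[\Bar Y F]\rvert\leq\sqrt{\Tilde{m}}\,\lvert\lvert F\rvert\rvert_{L^2_\omega}$, and Proposition~\ref{NLZ} controls $\lvert\lvert F\rvert\rvert_{L^{2(d+2)}_{t,x}L^2_\omega}\leq\lvert\lvert F\rvert\rvert_{\Theta_Z}$ by $\lvert\lvert W\rvert\rvert_{L^{p_*'}_t H^{s_c,p_*'}_x L^2_\omega}$.

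The main technical hurdle is the $L^{p_*}_t H^{s_c,p_*}_x$ piece. Since $\langle\nabla\rangle^{s_c}$ acts only in the $x$ variable it commutes with $\E$, and Cauchy--Schwarz in $\omega$ yields $\lvert\lvert\E[\Bar Y F]\rvert\rvert_{L^{p_*}_t H^{s_c,p_*}_x}\leq\lvert\lvert\langle\nabla\rangle^{s_c}(\Bar Y F)\rvert\rvert_{L^{p_*}_{t,x}L^2_\omega}$. The difficulty is that pointwise in $\omega$ the field $Y$ is almost surely not in $L^\infty_x$, which rules out a naive Kato--Ponce step. The resolution is a fractional Leibniz expansion in the spirit of Lemma~\ref{interpolationargument} combined with the translation invariance of the law of $Y$: assumption $(A_1)$ gives $\lvert\lvert\langle\nabla\rangle^{j}Y(t,x)\rvert\rvert_{L^2_\omega}=\lvert\lvert\langle\xi\rangle^j f\rvert\rvert_{L^2}$ finite and independent of $(t,x)$ for every $0\leq j\leq s_c$, so every derivative that lands on $Y$ after the Leibniz expansion can be absorbed into an absolute constant via Cauchy--Schwarz in $\omega$. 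The remaining estimate reduces to $\lvert\lvert F\rvert\rvert_{L^{p_*}_t H^{s_c,p_*}_x L^2_\omega}\leq\lvert\lvert F\rvert\rvert_{\Theta_Z}$, again bounded by Proposition~\ref{NLZ}.
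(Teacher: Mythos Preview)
Your overall strategy matches the paper's: split the Besov norm into high and low frequencies (Proposition~\ref{NLV1lemma} for the former, a duality argument via Lemma~\ref{lemmestrY2}/Proposition~\ref{prop52} with $p=q=\tfrac{2(d+2)}{d-2}$ for the latter), and handle the $L^{2(d+2)}_{t,x}$ and $L^{p_*}_tH^{s_c,p_*}_x$ pieces by pulling out $Y$ and invoking Proposition~\ref{NLZ}. The paper dispatches the $L^{p_*}_tH^{s_c,p_*}_x$ piece in one line (``by the H\"older inequality and definition of $\Theta_Z$'', using that $\langle\nabla\rangle^jY\in L^\infty_{t,x}L^2_\omega$ for $0\le j\le\lceil s_c\rceil$); you are right to flag the vector-valued Leibniz subtlety the paper leaves implicit.

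However, your resolution of that subtlety has the steps in the wrong order. Once you have passed to $\lvert\lvert\langle\nabla\rangle^{s_c}(\bar YF)\rvert\rvert_{L^{p_*}_{t,x}L^2_\omega}$ via Cauchy--Schwarz against $1\in L^2_\omega$, any subsequent Leibniz expansion produces terms $\lvert\lvert\nabla^j\bar Y\cdot\nabla^kF\rvert\rvert_{L^2_\omega}$, and H\"older in $\omega$ now forces $L^4_\omega$ on each factor --- fine for the Gaussian $Y$ by hypercontractivity, but not available for $F$, which is only built from $W\in L^2_\omega$. The fix is to perform the Leibniz/paraproduct decomposition \emph{before} taking the expectation: keep $\langle\nabla\rangle^{s_c}\E[\bar YF]=\E[\langle\nabla\rangle^{s_c}(\bar YF)]$, expand into schematic pieces $\E[\langle\nabla\rangle^j\bar Y\cdot\langle\nabla\rangle^kF]$, and only then apply Cauchy--Schwarz in $\omega$ term by term to obtain $\lvert\lvert\langle\nabla\rangle^jY\rvert\rvert_{L^2_\omega}\lvert\lvert\langle\nabla\rangle^kF\rvert\rvert_{L^2_\omega}$ with the first factor a constant in $(t,x)$. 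This is presumably what you intended, and it is exactly what the paper's terse appeal to the H\"older inequality encodes.
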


\begin{proof}
    First we notice that $\langle \nabla\rangle^j Y\in L_{t,x}^\infty,L_\omega^2$ for any $0\leq j\leq \lceil s_c\rceil$ by hypothesis on $f$.

    Thus by the Hölder inequality and definition of $\Theta_Z$: $$\lvert\lvert \E[\Bar{Y}\int_0^t S(t-\tau)W(\tau)d\tau]\rvert\rvert_{L_{t,x}^{2(d+2)}\cap L_t^{p_*},H_x^{s_c,p_*}}\lesssim \lvert\lvert \int_0^t S(t-\tau)W(\tau)d\tau\rvert\rvert_{\Theta_Z}. $$

    We conclude by using Proposition \ref{NLZ} that: 
    $$\lvert\lvert \E[\Bar{Y}\int_0^t S(t-\tau)W(\tau)d\tau]\rvert\rvert_{L_{t,x}^{2(d+2)}\cap L_t^{p_*},H_x^{s_c,p_*}}\lesssim \lvert\lvert W\rvert\rvert_{L_t^{p_*'},H_x^{s_c,p_*'},L_\omega^2}.$$

    We are left to show the following estimate: 

    \begin{equation}\label{NLV1BES}
        \lvert\lvert \E[\Bar{Y}\int_0^t S(t-\tau)W(\tau)d\tau]\rvert\rvert_{L_t^2,B_2^{-\frac{1}{2},s_c-\frac{1}{2}}}\lesssim \lvert\lvert W\rvert\rvert_{L_t^{p_*'},H_x^{s_c,p_*'},L_\omega^2}+\lvert\lvert W\rvert\rvert_{L_{t,x}^{\frac{2(d+2)}{d+6}},L_\omega^2}.
    \end{equation}

    To do this we are going to look to both high and low frequencies, using: \begin{equation}\label{highlowtktk}
    \begin{array}{rcl}
         \lvert\lvert \E[\Bar{Y}\int_0^t S(t-\tau)W(\tau)d\tau]\rvert\rvert_{L_t^2,B_2^{-\frac{1}{2},s_c-\frac{1}{2}}}& \leq  & \lvert\lvert \sum_{j<0} \bigg(\E[\Bar{Y}\int_0^t S(t-\tau)W(\tau)d\tau]\bigg)_j\rvert\rvert_{L_t^2,B_2^{-\frac{1}{2},s_c-\frac{1}{2}}} \\
         & & +\lvert\lvert \sum_{j\geq 0} \bigg(\E[\Bar{Y}\int_0^t S(t-\tau)W(\tau)d\tau]\bigg)_j\rvert\rvert_{L_t^2,B_2^{-\frac{1}{2},s_c-\frac{1}{2}}}.
    \end{array}
    \end{equation}

    \textbf{Low frequencies:} in this subpart, we want to control the $L_t^2,B_2^{-\frac{1}{2},s_c-\frac{1}{2}}$ norm by duality, but only for the low frequency term in (\ref{highlowtktk}). So we take $U\in L_t^2,B_2^{\frac{1}{2},\frac{1}{2}-s_c}$of the form: $$U=\sum_{j\leq 0} U_j.$$

    We write: 

    $$\langle U, \E[\Bar{Y}\int_0^t S(t-\tau)W(\tau)d\tau]\rangle =\E\langle \int_t^\infty S(t-\tau)Y(\tau)U(\tau)d\tau,W\rangle.$$

    We use the Hölder inequality and inequality (\ref{prop522}) with $(p,q,\sigma_1,\sigma)=(\frac{2(d+2)}{d-2},\frac{2(d+2)}{d-2},1,0)$ to get: 

$$\begin{array}{rl}
     \lvert\langle U, \E[\Bar{Y}\int_0^t S(t-\tau)W(\tau)d\tau]]\rangle \rvert & \leq \lvert\lvert \int_t^\infty S(t-\tau)Y(\tau)U(\tau)d\tau\rvert\rvert_{L_{t,x}^{\frac{2(d+2)}{d-2}},L_\omega^2}\lvert\lvert W\rvert\rvert_{L_{t,x}^{\frac{2(d+2)}{d+6}},L_\omega^2} \\
     & \lesssim \lvert\lvert U\rvert\rvert_{B_2^{1-\frac{1}{2},1-\frac{1}{2}}}\lvert\lvert W\rvert\rvert_{L_{t,x}^{\frac{2(d+2)}{d+6}},L_\omega^2},
\end{array}$$

    Because: $\frac{2(d-2)}{2(d+2)}+\frac{d(d-2)}{2(d+2)}=\frac{d}{2}-1$ and: $\frac{d-2}{2(d+2)}+\frac{d+6}{2(d+2)}=1$.

And by considering that $U$ contains only low frequency we obtain: $$\lvert\langle U, \E[\Bar{Y}\int_0^t S(t-\tau)W(\tau)d\tau]]\rangle \rvert\lesssim \lvert\lvert U\rvert\rvert_{B_2^{\frac{1}{2},\frac{1}{2}-s_c}}\lvert\lvert W\rvert\rvert_{L_{t,x}^{\frac{2(d+2)}{d+6}},L_\omega^2}.$$

Thus by duality: 

\begin{equation}\label{NLV1LowFreq}
    \lvert\lvert \sum_{j<0} \bigg(\E[\Bar{Y}\int_0^t S(t-\tau)W(\tau)d\tau]\bigg)_j\rvert\rvert_{L_t^2,B_2^{-\frac{1}{2},0}}\lesssim \lvert\lvert W\rvert\rvert_{L_{t,x}^{\frac{2(d+2)}{d+6}},L_\omega^2}.
\end{equation}

\textbf{High frequencies:} to control the high frequency term in (\ref{highlowtktk}), we use Proposition \ref{NLV1lemma} to get:
$$\lvert\lvert\E[\Bar{Y}\int_0^t S(t-\tau)W(\tau)d\tau]\rvert\rvert_{L_t^2,\Dot{H}_x^{s_c-\frac{1}{2}}} \lesssim \lvert \lvert W\rvert\rvert_{L_t^{p_*'},H_x^{s_c-1,p_*'},L_\omega^2}.$$

And: $$\lvert\lvert \sum_{j\geq 0} \bigg(\E[\Bar{Y}\int_0^t S(t-\tau)W(\tau)d\tau]\bigg)_j\rvert\rvert_{L_t^2,B_2^{0,s_c-\frac{1}{2}}}\leq \lvert\lvert\E[\Bar{Y}\int_0^t S(t-\tau)W(\tau)d\tau]\rvert\rvert_{L_t^2,\Dot{H}_x^{s_c-\frac{1}{2}}}.$$

Thus: \begin{equation}\label{NLV1HighFreq}
    \lvert\lvert \sum_{j\geq 0} \bigg(\E[\Bar{Y}\int_0^t S(t-\tau)W(\tau)d\tau]\bigg)_j\rvert\rvert_{L_t^2,B_2^{0,s_c-\frac{1}{2}}}\lesssim \lvert \lvert W\rvert\rvert_{L_t^{p_*'},H_x^{s_c,p_*'},L_\omega^2}.
\end{equation}

Combining (\ref{NLV1LowFreq}) and (\ref{NLV1HighFreq}) gives: 

$$\lvert\lvert \E[\Bar{Y}\int_0^t S(t-\tau)W(\tau)d\tau]\rvert\rvert_{L_t^2,B_2^{-\frac{1}{2},s_c-\frac{1}{2}}}\lesssim \lvert \lvert W\rvert\rvert_{L_t^{p_*'},H_x^{s_c,p_*'},L_\omega^2}+\lvert\lvert W\rvert\rvert_{L_{t,x}^{\frac{2(d+2)}{d+6}},L_\omega^2}.$$

And the proof is complete.

\end{proof}

\section{Linear term}

In this section, we study the linear operator $L$ defined in (\ref{linearpart}) and we prove Proposition \ref{proplinpartinv}. The objective is to invert $1-L$. 

We denote $L_1:V\mapsto -4\Tilde{m}Re\E[\Bar{Y}W_V(Y)] $ and $L_2:V\mapsto -2\Tilde{m}W_V(Y)$.

\begin{proposition}\label{proplin}
The operator $$L_2 : V \mapsto -W_V(Y)$$
is continuous from $L_t^2,B_2^{-\frac{1}{2},s_c-\frac{1}{2}}$ into $\Theta_Z$, so it is continuous from $\Theta_{V}$ to $\Theta_Z$.
\end{proposition}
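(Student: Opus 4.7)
The proof reduces to estimating $\Phi U(t) := \int_0^t S(t-s) Y(s) U(s)\,ds$ for $U := \underline{w}\ast V$, since $W_V(Y) = -i\,\Phi U$. Because $w$ is a finite measure, so is $\underline{w}$, hence convolution with $\underline{w}$ is a Fourier multiplier with symbol $\widehat{\underline{w}} \in L^\infty$. As it commutes with every Littlewood--Paley projector, it acts boundedly on every Besov space of the form $B_2^{s,t}$, so
$$\|U\|_{L_t^2, B_2^{-1/2, s_c-1/2}} \leq \|\widehat{\underline{w}}\|_{L^\infty}\, \|V\|_{L_t^2, B_2^{-1/2, s_c-1/2}}.$$
It therefore suffices to bound $\|\Phi U\|_{\Theta_Z} \lesssim \|U\|_{L_t^2, B_2^{-1/2, s_c-1/2}}$.

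I would handle each of the four component norms of $\Theta_Z$ separately by applying Proposition~\ref{prop52} with $j=0$ and carefully chosen exponents. For $L_t^{p_*} H_x^{s_c,p_*} L_\omega^2$, apply \eqref{prop522} with $(p_1,q_1) = (p_*, p_*)$, $\sigma_1 = 0$, $\sigma = s_c$; admissibility $\tfrac{2+d}{p_*} = \tfrac{d}{2}$ is direct, and $\sigma + \sigma_1 = s_c \leq \lceil s_c\rceil$. For $L_t^4 H_x^{s_c, 2d/(d-1)} L_\omega^2$, apply \eqref{prop522} with $(p_1, q_1) = (4, 2d/(d-1))$, $\sigma_1 = 0$, $\sigma = s_c$. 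For $L_t^\infty H_x^{s_c} L_\omega^2$, apply \eqref{prop522} with $(p_1, q_1) = (\infty, 2)$, $\sigma_1 = 0$, $\sigma = s_c$. Finally, the pair $L_{t,x}^{2(d+2)} L_\omega^2$ is not Strichartz-admissible, so I would fall back on \eqref{prop521} with $(p_1,q_1) = (2(d+2), 2(d+2))$, $\sigma = 0$, $\sigma_1 = s_c$, for which $\tfrac{d+2}{2(d+2)} = \tfrac{1}{2} = \tfrac{d}{2} - s_c$; this yields an estimate in $B_{2(d+2)}^{0,0}$, and one concludes using the continuous embedding $B_p^{0,0} \hookrightarrow L^p$ for $p \geq 2$, a consequence of the Littlewood--Paley square function and Minkowski's inequality. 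The hypothesis on $\partial^\alpha h_j$ required by Proposition~\ref{prop52} is guaranteed by assumption $(A_4)$ together with the remark following it.

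All four applications of Proposition~\ref{prop52} give the bound for the untruncated integral $\int_0^\infty S(t-s) Y(s) U(s)\,ds$. To pass to the causal version $\int_0^t$, I would invoke the Christ--Kiselev lemma, which is available whenever the time exponent of the target exceeds the source exponent $2$; this covers the $L^{p_*}$, $L^4$, and $L^{2(d+2)}$ components. The $L_t^\infty H_x^{s_c}$ endpoint falls outside Christ--Kiselev; I would treat it by writing $\int_0^t = \int_0^\infty - \int_t^\infty$, exploiting the time-reversal symmetry of $S(t)$ to apply Proposition~\ref{prop52} to the tail integral as well, thus obtaining a uniform-in-$t$ estimate. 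Continuity in $H^{s_c}(L_\omega^2)$, needed for the $\mathcal{C}(\mathbb{R},H^{s_c})$ component, then follows from density of smooth compactly supported $U$ in $L_t^2 B_2^{-1/2, s_c-1/2}$ and the uniform boundedness just established. Continuity from $\Theta_V$ into $\Theta_Z$ is then automatic from the inclusion $\Theta_V \subset L_t^2 B_2^{-1/2, s_c-1/2}$.

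The main obstacle I expect is the treatment of the $L_t^\infty$ endpoint (where Christ--Kiselev fails) and the bookkeeping needed to fit the non-admissible pair $(2(d+2), 2(d+2))$ into the framework of Proposition~\ref{prop52}; everything else is a direct index computation.
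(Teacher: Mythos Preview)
Your proposal is correct and follows essentially the paper's approach: apply Proposition~\ref{prop52} with the four choices of exponents $(p_*,p_*,0,s_c)$, $(4,\tfrac{2d}{d-1},0,s_c)$, $(\infty,2,0,s_c)$, $(2(d+2),2(d+2),s_c,0)$, then Christ--Kiselev, then Young's inequality for the convolution by $\underline{w}$. Two minor remarks: the Christ--Kiselev lemma does apply for an $L_t^\infty$ target whenever the source exponent is strictly smaller (here $2<\infty$), so your splitting argument for that endpoint, while valid, is not needed; and your use of \eqref{prop521} rather than \eqref{prop522} for the $L_{t,x}^{2(d+2)}$ component is in fact the cleaner choice, since \eqref{prop522} with $\sigma_1=s_c$ would place the low-frequency index on the right-hand side at $s_c-\tfrac12$ instead of the desired $-\tfrac12$.
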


\begin{proof}

Let $V \in L_t^2,B_2^{-\frac{1}{2},s_c-\frac{1}{2}}$. By using Proposition \ref{prop52} and the Christ-Kiselev Lemma, for any $(\sigma,\sigma_1,p_1,q_1)$ such that: $$\sigma\geq 0,\ \sigma_1\geq 0,\ \sigma+\sigma_1 \leq \lceil s \rceil ,\ p_1>2,$$ and: $$\frac{2}{p_1}+\frac{d}{q_1}=\frac{d}{2}-\sigma_1,$$ we have:

\begin{equation}\label{prop52b}
    \lvert\lvert \int_0^t S(t-s)Y(s)U(s)ds\lvert\lvert_{L_t^{p_1},H^{\sigma,q_1},L_\omega^2}\leq C \lvert \lvert U\rvert\rvert_{L_t^2,B_2^{\sigma_1-\frac{1}{2},\sigma_1+\sigma-\frac{1}{2}}}.
\end{equation}

By applying (\ref{prop52b}) to $(p_1,q_1,\sigma_1,\sigma)$ equal to either: $(p_*,p_*,0,s_c),\ (2(d+2),2(d+2),s_c,0),\ $ $\ (4,\frac{2d}{d-1},s_c,0) ,\text{ or }(\infty,2,0,s_c),$
we get: $$\lvert\lvert W_V(Y)\rvert\rvert_{\Theta_Z}\leq C \lvert\lvert \underline{w}*V\rvert\rvert_{L_t^2,B_2^{-\frac{1}{2},s_c-\frac{1}{2}}}.$$

And thus by Young's inequality: $$\lvert\lvert W_V(Y)\rvert\rvert_{\Theta_Z}\leq C  \rvert\rvert V\rvert\rvert_{L_t^2,B_2^{-\frac{1}{2},s_c-\frac{1}{2}}}.$$

The proof is complete.
\end{proof}

\begin{lemma}
The operator $L_1$ is a time-space Fourier multiplier of symbol $4\Tilde{m}\hat{\underline{w}}m_f$ where $m_f$ is the Lindhard function associated to $f$, defined by the formula: \begin{equation}
    m_f(\tau,\xi)=2\mathcal{F}_t\big(sin(\lvert \xi\rvert^2t)h(2\xi t)\mathbf{1}_{t\geq 0}\big)(\tau),
\end{equation}
where we recall that $h$ is the inverse Fourier transform of $\lvert f\rvert^2$.
\end{lemma}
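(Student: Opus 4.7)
The plan is to compute $\E[\bar{Y}(t,x)W_V(Y)(t,x)]$ explicitly via Wiener isometry, then isolate the real part using the symmetries of the data, and finally recognize the resulting operator as a retarded time-convolution, hence a Fourier multiplier.

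First I would expand $W_V(Y)(t,x) = -i\int_0^t S(t-s)[(\underline{w}\ast V)(s,\cdot)\,Y(s,\cdot)](x)\,ds$ using the Wiener representation $Y(s,x) = \int f(\xi)e^{i\xi\cdot x - is(m+|\xi|^2)}\,dW(\xi)$. The free propagator $S(t-s) = e^{-i(t-s)m}e^{i(t-s)\Delta}$ acts on each plane wave $(\underline{w}\ast V)(s,\cdot)e^{i\xi\cdot\,}$ as multiplication by $e^{-i(t-s)|\zeta|^2}$ on the Fourier side; writing $\zeta = \kappa + \xi$ and integrating in $\kappa$ produces a phase $e^{-i(t-s)|\kappa+\xi|^2}$. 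Pairing this with $\bar{Y}(t,x) = \int \bar{f}(\eta)e^{-i\eta x + it(m+|\eta|^2)}\,\overline{dW(\eta)}$ and applying the isometry $\E[\overline{dW(\eta)}dW(\xi)] = \delta(\eta-\xi)d\eta d\xi$ forces $\eta = \xi$ and collapses the double stochastic integral. After simplification the phase in $\xi$ becomes $e^{-i(t-s)(|\kappa|^2 + 2\kappa\cdot\xi)}$, and the key identity
\[
\int |f(\xi)|^2 e^{-2i(t-s)\kappa\cdot\xi}\,d\xi \;=\; (2\pi)^d h(2(t-s)\kappa),
\]
valid because $h = \mathcal{F}^{-1}(|f|^2)$ and $h$ is real and even (as $|f|^2$ is real and spherically symmetric), gives a clean formula: on the spatial Fourier side,
\[
\mathcal{F}_x\bigl(\E[\bar{Y}W_V(Y)]\bigr)(t,\xi) \;=\; -i(2\pi)^d\hat{\underline{w}}(\xi)\int_0^t \hat{V}(s,\xi)\,e^{-i(t-s)|\xi|^2}\,h(2(t-s)\xi)\,ds.
\]

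Next I would take $-4\tilde{m}\,\mathrm{Re}$ of this expression. Using the change of variable $\kappa \mapsto -\kappa$ together with $\hat{\underline{w}}(-\kappa) = \hat{\underline{w}}(\kappa)\in\R$ (since $\underline{w}$ is an even real measure), $\hat{V}(s,-\kappa) = \overline{\hat{V}(s,\kappa)}$ (since $V$ is real-valued), and $h(-2(t-s)\kappa) = h(2(t-s)\kappa)$, the sum $A + \bar{A}$ simplifies: the two oscillatory phases $-ie^{-i(t-s)|\kappa|^2}$ and $+ie^{+i(t-s)|\kappa|^2}$ combine via Euler's formula into $-2\sin((t-s)|\kappa|^2)$. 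Therefore
\[
L_1(V)(t,x) \;=\; 4\tilde{m}\int_0^t \int e^{i\kappa\cdot x}\hat{\underline{w}}(\kappa)\hat{V}(s,\kappa)\,\sin((t-s)|\kappa|^2)\,h(2(t-s)\kappa)\,d\kappa\,ds.
\]
This makes $L_1(V)$ manifestly real, and the time-integral is a retarded convolution (understood after extending $V$ by zero for $s < 0$, which is consistent with the scattering framework).

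Finally, taking the Fourier transform in time of the retarded convolution kernel $\sin(u|\xi|^2)h(2u\xi)\mathbf{1}_{u\geq 0}$ yields exactly $\frac{1}{2}m_f(\tau,\xi)$ by the very definition of the Lindhard function, identifying $L_1$ as a space-time Fourier multiplier with symbol proportional to $\tilde{m}\hat{\underline{w}}(\xi)m_f(\tau,\xi)$. The main bookkeeping obstacle is the careful combination of the complex conjugate term to produce the sine factor while simultaneously tracking the normalisation constants from the Wiener isometry, from the inverse Fourier convention (which contributes $(2\pi)^{-d}$ factors), and from the $\frac{1}{2}$ in $\mathrm{Re}(z) = (z+\bar z)/2$; these must line up to reproduce the coefficient $4\tilde{m}$ in the stated symbol.
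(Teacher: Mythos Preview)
Your proposal is correct and follows exactly the approach alluded to in the paper. The paper's own proof is a one-line citation: ``It has been proven in \cite{CodS18}, where the authors use the properties of Wiener integrals to get the result.'' Your computation supplies precisely those details---the Wiener isometry collapses the double stochastic integral, the $\xi$-integral produces $h(2(t-s)\kappa)$, and the real part combines the phases into the sine---so there is nothing to compare.
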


\begin{proof}
It has been proven in \cite{CodS18}, where the authors use the properties of Wiener integrals to get the result.
\end{proof}

\begin{proposition}\label{linearpropinversion1} (Lewin-Sabin \cite{lewsabII}, see also Collot-de Suzzoni \cite{CodS18})
Let $d\geq 4$ and $s=\frac{d}{2}-1$. Let $f$ be a radial map in $L^2\cap L^\infty(\R^d)$. Assume that: 
\begin{itemize}
    \item $\langle \xi \rangle^{\lceil s_c\rceil} f\in L^2(\R^d)$
    \item $\int_{\R^d} \lvert\xi\rvert^{1-d}\lvert f\nabla f\rvert <\infty$,
    \item writing $r=\lvert \xi\rvert $ the radial variable, $\partial_r\lvert f\rvert^2$ is continuous on $(0,\infty)$, $\partial_r\lvert f\rvert^2<0$ for $r>0$, and $r^\frac{d-1}{2}\partial_r\lvert f\rvert^2(r)\in L^1$,
    \item $\lvert \xi\rvert^{2-d}h\in L^1(\R^d)$.
\end{itemize}

If $\underline{w}$ is an even finite measure such that: $$\lvert\lvert (\hat{\underline{w}})_-\rvert\rvert_{L^\infty}\bigg(\int_{\R^d}\frac{\lvert h\rvert}{\lvert x \rvert^{d-2}}dx\bigg)<2\lvert \mathbb{S}^{d-1}\rvert,$$

then there holds \begin{equation}\label{inversionmultFourier}
    \underset{(\omega,\xi)\in \R\times\R^d}{inf} \lvert \hat{\underline{w}}(k) m_f(\omega,k)+1\rvert>0, 
\end{equation}

and $1+L_1$ is continous, invertible and of continuous inverse on $L^2_{t,x}$ and $L_t^2B_2^{-\frac{1}{2},s_c-\frac{1}{2}}$.
\end{proposition}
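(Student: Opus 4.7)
The strategy is to translate invertibility of $1+L_1$ into a pointwise lower bound on its $(\tau,\xi)$-Fourier symbol, and then establish that bound via a careful analysis of the Lindhard function $m_f$. By the preceding lemma, $L_1$ is the Fourier multiplier with symbol $M(\tau,\xi)=4\Tilde{m}\hat{\underline{w}}(\xi)m_f(\tau,\xi)$. By Plancherel, bounded invertibility of $1+L_1$ on $L^2_{t,x}$ is equivalent to $M\in L^\infty_{\tau,\xi}$ together with $\inf_{(\tau,\xi)}|1+M(\tau,\xi)|>0$. Moreover $L_1$ is a Fourier multiplier in the spatial variable $\xi$, so it commutes with each Littlewood--Paley projector $\Delta_j$ and with the weights defining the inhomogeneous Besov norm; the $L^2_{t,x}$ inversion therefore transfers dyadic block by dyadic block to an inversion on $L_t^2,B_2^{-\frac{1}{2},s_c-\frac{1}{2}}$. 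The whole proof thus reduces to the pointwise separation stated in (\ref{inversionmultFourier}).

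Next I would establish the analytic structure of $m_f$. Unwinding its definition together with $h=\mathcal{F}^{-1}|f|^2$ and applying the Sokhotski--Plemelj formula yields a Cauchy-type representation of the form
$$m_f(\tau,\xi)=\frac{1}{(2\pi)^d}\int_{\R^d}|f(\eta)|^2\Bigl[\frac{1}{2\xi\cdot\eta+|\xi|^2-\tau+i0^+}-\frac{1}{2\xi\cdot\eta-|\xi|^2-\tau+i0^+}\Bigr]d\eta.$$
From this representation, $\mbox{Im}(m_f)$ is a delta-measure supported on a hyperplane in $\eta$-space with integrand $|f(\eta+\xi/2)|^2-|f(\eta-\xi/2)|^2$, whose sign is controlled by the strict radial monotonicity $\partial_r|f|^2<0$ from $(A_3)$, while a Hardy-type estimate using $(A_5)$ yields the uniform bound $\lvert\lvert m_f \rvert\rvert_{L^\infty_{\tau,\xi}}\leq \frac{1}{2|\S^{d-1}|}\int|h(x)||x|^{2-d}dx$.

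The pointwise lower bound then follows by a case analysis. Writing $1+\hat{\underline{w}}m_f=(1+\hat{\underline{w}}\,\mbox{Re}(m_f))+i\hat{\underline{w}}\,\mbox{Im}(m_f)$, wherever $\hat{\underline{w}}\,\mbox{Re}(m_f)\geq 0$ one automatically obtains $\mbox{Re}(1+\hat{\underline{w}}m_f)\geq 1$; on the complement, the sign structure coming from the Lindhard computation combined with the inequality $\lvert\lvert (\hat{\underline{w}})_-\rvert\rvert_{L^\infty}\cdot \lvert\lvert m_f\rvert\rvert_{L^\infty_{\tau,\xi}}<1$ (itself a restatement of the smallness hypothesis) yields $|1+\hat{\underline{w}}m_f|\geq c>0$ uniformly. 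The new feature compared to \cite{lewsabII,CodS18} is precisely that the positivity region requires no hypothesis on $\hat{\underline{w}}(0)_+$, since $\mbox{Re}(1+\hat{\underline{w}}m_f)\geq 1$ for free there.

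The hard part will be securing uniformity of the lower bound near the boundary of the set $\{\mbox{Im}(m_f)=0\}$, together with appropriate decay of $m_f$ as $(\tau,\xi)\to\infty$. The decay follows from $(A_4)$ via Riemann--Lebesgue applied to $\sin(|\xi|^2 t)h(2\xi t)\mathbf{1}_{t\geq 0}$; the boundary uniformity is handled by the strict inequality in the smallness hypothesis combined with the continuity of $m_f$. The extension from $L^2_{t,x}$ to $L_t^2,B_2^{-\frac{1}{2},s_c-\frac{1}{2}}$ is then automatic by the commutation observation in the first paragraph. I would follow the computations of \cite{lewsabII,CodS18} closely, carrying through the multiplicative constant $4\Tilde{m}$ arising in the identification of the symbol.
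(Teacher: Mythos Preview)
Your overall strategy---reduce to a pointwise lower bound on the symbol and argue by cases---matches the paper, but your case analysis has a real gap. You split on the sign of $\hat{\underline{w}}\,\mbox{Re}(m_f)$; on the complement you invoke smallness of $\|(\hat{\underline{w}})_-\|_{L^\infty}$, but that only handles $\hat{\underline{w}}<0$, $\mbox{Re}(m_f)>0$. The dangerous regime is $\hat{\underline{w}}(k)>0$ with $\mbox{Re}(m_f)(\omega,k)<0$: there the hypothesis on $(\hat{\underline{w}})_-$ gives nothing and $1+\hat{\underline{w}}\,\mbox{Re}(m_f)$ can vanish. This regime does occur, near the origin when $\omega/|k|$ lies in a compact interval $[c,R]$. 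The paper handles it by proving that $|\mbox{Im}(m_f)|$ is bounded below there, via the one-dimensional Lindhard formula and the strict monotonicity $\partial_r|f|^2<0$. Your appeal to ``continuity of $m_f$'' cannot rescue this: $m_f$ is \emph{not} continuous at $(0,0)$ (different limits along different rays $\omega/|k|=\text{const}$), which is exactly why the paper isolates a separate near-origin step and splits further into $\omega/|k|\to 0$, $\omega/|k|\to\infty$, and $\omega/|k|\in[c,R]$.

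There is also a structural point you miss. The paper asserts that the real-frequency bound (\ref{inversionmultFourier}) alone does \emph{not} suffice to invert $1+L_1$ (flagging this as an oversight in \cite{lewsabII}); what is needed is the Penrose-type condition
\[
\inf_{\gamma\geq 0,\ (\omega,k)\in\R\times\R^d}\bigl|1+\hat{\underline{w}}(k)\,m_h^\gamma(\omega,k)\bigr|>0,\qquad m_h^\gamma(\omega,k)=2\int_0^\infty e^{-t(\gamma+i\omega)}\sin(|k|^2t)\,h(2kt)\,dt,
\]
and the paper's ``Conclusion'' step is devoted to deducing this from (\ref{inversionmultFourier}) by continuity in $\gamma$, compactness, and the observation that $m_h^{\gamma|k|}=m_{\tilde f}$ for a modified profile $\tilde f$ still satisfying the hypotheses. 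Your Plancherel reduction is correct for a genuine Fourier multiplier on $L^2(\R_t\times\R^d_x)$, but since $W_V$ involves the causal integral $\int_0^t$ you should at minimum explain why the causal structure does not obstruct the reduction, or else follow the paper and establish the Penrose condition.
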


\begin{proof}
    See \cite{lewsabII} where the authors show an analogous result but with more hypotheses. The following new arguments follows from a private conversation with Daniel Han Kwan. 
    
    In \cite{lewsabII}, the authors show the condition (\ref{inversionmultFourier}) and claim that it suffices to invert the operator. However, in order to invert the operator we need actually a Penrose stability condition, see for instance \cite{Mouhot2011Jan,Penrose1960Mar}. We explain below this new condition. 
    
    The Fourier multiplier $m_f$ we consider is defined by: $$m_f(\omega,k)=2\int_0^\infty e^{-it\omega}sin(\lvert\xi\rvert^2t)h(2\xi t)dt.$$  

    We introduce for $\gamma \geq 0$ the quantity $m_h^\gamma$: 

    $$m_h^\gamma(\omega,k)=2\int_0^\infty e^{-t(\gamma+i\omega)}sin(\lvert\xi\rvert^2t)h(2\xi t)dt.$$

    We will denote: $m_h=m_h^0=m_f$.
    
    In order to invert the operator, the condition needed is: 

    \begin{equation}\label{Penrosecondition}
        \underset{\gamma\geq 0, \ (\omega,k)\in\R\times\R^d}{inf}\lvert \underline{\hat{w}}(k)m_h^\gamma(\omega,k)+1\rvert>0.
    \end{equation}

    Still we begin by showing that the condition (\ref{inversionmultFourier}) holds, and implies the new condition (\ref{Penrosecondition}).

    \textbf{Step 1: Reduction to the vincinity of the origin} 
    
    We recall briefly the arguments from \cite{lewsabII}, that are still valid under our hypotheses, showing that for any $V$ neighborhood of $(0,0)$: 
    $$\underset{(\omega,k)\in \R\times\R^d\backslash V}{inf}\lvert 1+\hat{\underline{w}}(k)m_f(\omega,k)\rvert>0.$$ 
    
    Indeed, $m_f$ is continuous on $\R\times\R^d\backslash \{(0,0)\}$ and there is a compact set $A$ of $\R\times\R^d$ such that on the complement of this set: 
    $$\lvert \hat{\underline{w}}m_f+1\rvert\geq \frac{1}{2}.$$

    In this paper, we use that the potential is a finite measure and not necessarily in $L^1$. The hypothesis of a potential in $L^1$ is used in the proof of Lewin and Sabin \cite{lewsabII} to have that: $$\underline{\hat{w}}(\xi)m_f(\tau,\xi)\to 0,$$ when $\lvert \xi\rvert\to \infty$ uniformly on $\tau$. But $m_f$ verifies that
    $m_f(\tau,\xi)\to 0$ when $\lvert \xi\rvert\to \infty$ uniformly on $\tau$. Then we only need that $\underline{\hat{w}}\in L^\infty$, which is true when $\underline{w}$ is a finite measure.

    To show the continuity and most of the following results we use the Lindhard function in dimension $d$ for the particular case $\lvert f\rvert^2=\textbf{1}(\lvert k\rvert^2\leq \mu)$ denoted by $m_d^F$, then:

    \begin{equation}\label{Fermi1D} 
    \begin{array}{rl}
         m_1^F(\mu,\omega,k)= & \frac{\sqrt{2}}{\sqrt{\pi}\lvert k\rvert}\Bigg[\frac{1}{4}log\bigg{\lvert}\frac{(\lvert k\rvert^2+2\lvert k\rvert\mu)^2-\omega^2}{(\lvert k\rvert^2-2\lvert k\rvert\mu)^2-\omega^2}\bigg{\rvert}\\
         & +i\frac{\pi}{4}\bigg(\mathbf{1}(\lvert\omega+\lvert k\rvert^2\rvert\leq 2\sqrt{\mu}\lvert k\rvert)-\mathbf{1}(\lvert\omega-\lvert k\rvert^2\rvert\leq 2\sqrt{\mu}\lvert k\rvert)\bigg)\Bigg]  
    \end{array}
    \end{equation}

for $d=1$ and for $d\geq 2$:

\begin{equation}\label{FermiDD}
    m_d^F(\mu,\omega,k)=\frac{\lvert \mathbb{S}\rvert^{d-2}}{(2\pi)^d}\mu^{\frac{d-1}{2}}\int_0^1 m_1^F(\mu(1-r^2),\omega,k)r^{d-2}dr.
\end{equation}

    We note that: 
    \begin{equation}\label{mfLS}
        m_f(\omega,k)=-\int_{0}^\infty m_d^F(s,\omega,k)\partial_s \lvert f\rvert^2(s) ds.
    \end{equation}
    
    We first treat the space $\{\omega=0\}$ where by hypothesis: 
    $$\lvert m_f(0,k)\hat{\underline{w}}(k)+1\rvert\geq 1-\lvert\lvert \hat{\underline{w}}_-\rvert\rvert_{L^\infty}\frac{1}{2\lvert \S^{d-1}\rvert}\Bigg(\int_{\R^d}\frac{\lvert h(x)\rvert}{\lvert x\rvert^{d-2}}dx\Bigg)>0,$$
    and in $\{k=0\}$: $m_f(\omega,0)=0$.

    Then, the imaginary part of $m_f$ does not vanish outside of $\{k=0\}\cup \{\omega=0\}$, by using that for $\omega>0$ and $k\neq 0$, in dimension $1$ (for higher dimensions it is similar, we detail the computation for higher dimensions in step 2): \begin{equation}\label{imaginarypart}
        Im\Big(m_f(\omega,k)\Big)\leq \frac{\sqrt{\pi}}{2\sqrt{2}\lvert k \rvert}\int_{\frac{(\omega-\lvert k\rvert^2)^2}{4\lvert k\rvert^2}}^{\frac{(\omega+\lvert k\rvert^2)^2}{4\lvert k\rvert^2}}\partial_\tau\lvert f\rvert^2(\tau)d\tau <0,
    \end{equation}and that $m_f(\omega,k)=-m_f(-\omega,k)$. We can conclude that for any $V$ neighborhood of $(0,0)$: 
    $$\underset{(\omega,k)\in \R\times\R^d\backslash V}{inf}\lvert 1+\hat{\underline{w}}(k)m_f(\omega,k)\rvert>0.$$

    \textbf{Step 2: Study near the origin}
    
    We are left to study $\lvert \hat{w}m_f+1\rvert$ on a neighborhood of $(0,0)$ and we have to show: 
    
    $$\exists V\in \mathcal{V}_{\R\times\R^d}((0,0)), \exists \varepsilon_0>0, \forall (\omega,k)\in V, \lvert 1+\hat{\underline{w}}(k)m_f(\omega,k)\rvert\geq \varepsilon_0.$$
    
    We begin at the points where $Re(m_f)>0$. In this case we use the hypothesis on $(\hat{\underline{w}})_-$ and the fact that $m_f$ is uniformly bounded by:
    $\frac{1}{2\lvert \S\rvert^{d-1}}\int_{\R^d}\frac{\lvert h\rvert}{\lvert x \rvert^{d-2}}dx, $
    we get: $$1+\hat{\underline{w}}Re(m_f)\geq 1-(\hat{\underline{w}})_-Re(m_f)\geq \frac{1}{2}.$$ 
    
    For the other points, we use a remark in \cite{lewsabII}: 
    \begin{equation}\label{realpart}
        Re(m_f(\omega,k))\underset{k,\omega\to 0}{\sim} \frac{1}{2}\int_0^{\infty} th(t) cos\bigg( \frac{\omega}{2\lvert k \rvert}t\bigg)dt.
    \end{equation}
    
    But we have that: 

    $$\int_0^{\infty} th(t) cos( a t)dt\underset{a\to 0}{\to} \int_0^{\infty} th(t) dt>0,$$

    and we are in a case previously treated ($\Re(m_f(\omega,k))>0$). Moreover: $$\int_0^{\infty} th(t) cos( a t)dt\underset{a\to \infty}{\to} 0,$$

    thus when $\frac{\omega}{\lvert k\rvert}\to 0$ or when $\frac{\omega}{\lvert k\rvert}\to \infty$, the real part of $\hat{\underline{w}}(k)m_f(\omega,k)$ stays far from $-1$. 

    Then we only have to look at what happens when $\frac{\omega}{\lvert k\rvert}\in [c,R]$, for $c>0$ and $R<\infty$. In this case, without the assumptions introduced by Lewin and Sabin, we cannot be sure that the real part of $\Hat{\underline{w}}(k)m_f(\omega,k)$ stays far from $1$. 

    We have two cases, either $\Hat{\underline{w}}(0)=0$ and in for $k$ small enough $\hat{\underline{w}}(k)m_f(\omega,k)$ stays far from $-1$, either $\Hat{\underline{w}}(0)\neq 0$ and we show the imaginary part of $\Hat{\underline{w}}(k)m_f(\omega,k)$ is far from $0$. 

    We use (\ref{mfLS}) and (\ref{FermiDD}) to write that there is a constant $C_d>0$ such that: 

    $$Im\Big(m_f(\omega,k)\Big)=-C_d\int_0^\infty \int_0^1s^{\frac{d-1}{2}}\partial_s \lvert f\rvert^2(s) Im\Big( m_1^F(s(1-r^2),\omega,k)\Big)r^{d-2}drds.$$

    And because $Im\bigg(m_1^F(\cdot,\omega,k)\bigg)\in L^\infty$ and $r^\frac{d-1}{2}\partial_r \lvert f\rvert^2(r)\in L^1$, by Fubini's theorem: 

    $$Im\Big(m_f(\omega,k)\Big)=-C_d\int_0^1\int_0^\infty s^{\frac{d-1}{2}}\partial_s \lvert f\rvert^2(s) Im\Big( m_1^F(s(1-r^2),\omega,k)\Big)r^{d-2}dsdr.$$

    Using the case in dimension $1$ (inequality (\ref{imaginarypart})) we get, for $\omega>0$:

    \begin{eqnarray*}
        Im\Big(m_f(\omega,k)\Big) &\leq & C_d\frac{1}{\lvert k\rvert}\int_0^1 r^{d-2}\int_{\frac{(\omega-\lvert k\rvert^2)^2}{4\lvert k\rvert^2(1-r^2)}}^{\frac{(\omega+\lvert k\rvert^2)^2}{4\lvert k\rvert^2(1-r^2)}}\partial_s\lvert f\rvert^2(s)s^{\frac{d-1}{2}}ds dr\\
        & \leq & C_d\frac{1}{\lvert k\rvert}\int_0^\frac{1}{2} r^{d-2}\int_{\frac{(\omega-\lvert k\rvert^2)^2}{4\lvert k\rvert^2(1-r^2)}}^{\frac{(\omega+\lvert k\rvert^2)^2}{4\lvert k\rvert^2(1-r^2)}}\partial_s\lvert f\rvert^2(s)s^{\frac{d-1}{2}}ds dr,
    \end{eqnarray*}

    because $\partial_\tau\lvert f\rvert^2(\tau)<0$.

    Then we use the fact that $\frac{\omega}{\lvert k\rvert}\in [c,R]$, and the continuity of $\partial_r\lvert f\rvert^2$, to get for $\lvert k\rvert$ small enough (ie $\lvert k\rvert<c)$ that there exist a constant $C_f>0$ such that:

    \begin{eqnarray*}
        Im\Big(m_f(\omega,k)\Big) &\leq & -C_d\frac{1}{\lvert k\rvert}\int_0^\frac{1}{2} r^{d-2}\int_{\frac{(\omega-\lvert k\rvert^2)^2}{4\lvert k\rvert^2(1-r^2)}}^{\frac{(\omega+\lvert k\rvert^2)^2}{4\lvert k\rvert^2(1-r^2)}} C_f s^{\frac{d-1}{2}}ds dr\\
        &\leq & -C_d C_f \frac{1}{\lvert k\rvert}\int_0^\frac{1}{2} r^{d-2} \bigg( \frac{(c+\lvert k\rvert)^2}{4(1-r^2)}\bigg)^{\frac{d-1}{2}}\bigg( \frac{(\frac{\omega}{\lvert k\rvert}+\lvert k\rvert)^2}{4(1-r^2)}-\frac{({\lvert k\rvert}-\lvert k\rvert)^2}{4(1-r^2)}\bigg)dr.
    \end{eqnarray*}

Thus there is a constant $C_d'$ such that: 

    \begin{eqnarray*}
        Im\Big(m_f(\omega,k)\Big) &\leq & -C_d' C_f\frac{\omega}{\lvert k\rvert}\\
        &\leq & -C_d' C_f c .
    \end{eqnarray*}

Then, there exist a constant $C_1>0$ (depending on $f$) such that for $\lvert k\rvert$ small enough and for any $(\omega,k)$ such that $\frac{\omega}{\lvert k\rvert}\in [c,R]$ and $\omega>0$, we have: 

$$Im\Big(m_f(\omega,k)\Big)\leq -C_1<0.$$

If $\omega<0$ we use: $Im\Big(m_f(\omega,k)\Big)=-Im\Big(m_f(-\omega,k)\Big)$.

Thus, if we take a neighborhood of $(0,0)$ small enough, we can find a $\epsilon_0>0$ such that on this neighborhood: $$\lvert 1+\Hat{\underline{w}}m_F\rvert\geq \epsilon_0,$$

and this concludes the proof of (\ref{inversionmultFourier}) by combining step 1 and 2.

\textbf{Conclusion: }We now show that (\ref{Penrosecondition}) holds. 

Firstly, the quantity $m_h^\gamma (\omega,k)$ is continuous on $\gamma$, thus, as (\ref{inversionmultFourier}) holds, there exist $\lambda>0$ such that the condition becomes: 

$$\underset{\gamma\geq \lambda,\ (\omega,k)\in\R\times\R^d}{inf}\lvert \underline{\hat{w}}(k)m_h^\gamma(\omega,k)+1\rvert>0.$$

Then, as in the case $\gamma=0$, we consider the case where $\lvert f\rvert^2=\mathbf{1}(\lvert k\rvert^2\leq \mu)$ denoted by $m_d^{F,\gamma}$. And we have that: 

$$Re(m_1^{F,\gamma}(\mu,\omega,k))=\frac{\sqrt{2}}{4\sqrt{\pi}\lvert k\rvert} log\bigg(\frac{(\gamma^2+(\lvert k\rvert^2+2\sqrt{\mu}\lvert k\rvert-\omega)^2)(\gamma^2+(\lvert k\rvert^2+2\sqrt{\mu}\lvert k\rvert+\omega)^2)}{(\gamma^2+(\lvert k\rvert^2-2\sqrt{\mu}\lvert k\rvert-\omega)^2)(\gamma^2+(\lvert k\rvert^2+2\sqrt{\mu}\lvert k\rvert+\omega)^2)}\bigg).$$

Thus, using relations (\ref{FermiDD}) and (\ref{mfLS}) we have that: 

$$Re(m_h^\gamma (\omega,k))\underset{(\omega,k)\to 0}{\to}0,$$

uniformly in $\gamma\geq \lambda$. Then, there exists $V_0$ a neighborhood of $0$ in $\R\times \R^d$ such that the condition (\ref{Penrosecondition}) becomes: 

$$\underset{\gamma\geq \lambda,\ (\omega,k)\in\R\times\R^d\backslash V_0}{inf}\lvert \underline{\hat{w}}(k)m_h^\gamma(\omega,k)+1\rvert>0.$$

Then, using that: 

$$m_h^\gamma(\omega,k) \underset{\gamma \to \infty}{\to} 0,$$

uniformly in $(\omega,k)\in \R\times\R^d\backslash V_0$. Then we need to show that for a certain $\gamma_0>0$ and $\gamma_1>0$:

$$\underset{\gamma_0\leq \gamma \leq \gamma_1,\ (\omega,k)\in\R\times\R^d\backslash V_0}{inf}\lvert \underline{\hat{w}}(k)m_h^{\gamma}(\omega,k)+1\rvert>0.$$

Finally, we do a change of variable $\gamma'\lvert k\rvert=\gamma$ and we are going to prove that for some $\gamma_1'>0$: 

$$\underset{\gamma \leq \gamma_1',\ (\omega,k)\in\R\times\R^d\backslash V_0}{inf}\lvert \underline{\hat{w}}(k)m_h^{\gamma\lvert k\rvert}(\omega,k)+1\rvert>0.$$

We use the continuity of the function and show that for any $\gamma \leq \gamma_1',\ (\omega,k)\in\R\times\R^d\backslash V_0$ we have: $$\lvert \underline{\hat{w}}(k)m_h^{\gamma\lvert k\rvert}(\omega,k)+1\rvert>0.$$

We write that: $m_h^{\gamma\lvert k\rvert}=m_{\Tilde{h}}$, where $\Tilde{h}(x)=e^{-\gamma\lvert x\rvert/2}h(x)$. Then the $\Tilde{f}$ function associated such that $m_{\Tilde{h}}=m_{\Tilde{f}}$, is characterized by: 
$$\lvert \Tilde{f}\rvert^2=\lvert f\rvert^2 *\mathcal{F}(e^{\frac{-\gamma \lvert\cdot\rvert}{2}}).$$
So, $\lvert \Tilde{f}\rvert^2$ is the convolution of $\lvert f\rvert^2$ with an $L^1$ function. Thus $\Tilde{f}$ verifies the same condition as $f$ and we can apply (\ref{inversionmultFourier}) with $\Tilde{f}$ instead of $f$. 

Thus, for any $\gamma \leq \gamma_1',\ (\omega,k)\in\R\times\R^d\backslash V_0$ we have: $$\lvert \underline{\hat{w}}(k)m_h^{\gamma\lvert k\rvert}(\omega,k)+1\rvert>0.$$

And we conclude that (\ref{Penrosecondition}) holds.

\end{proof}

\begin{proposition}\label{proplineV1V2}
$L_1$ is continuous from $L_t^2,B_2^{-\frac{1}{2},s_c-\frac{1}{2}} $ into $\Theta_{V_1}$. 
\end{proposition}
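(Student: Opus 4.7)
The plan is to control $L_1(V) = -4\tilde m \,\Re\,\E[\bar Y W_V(Y)]$ in each factor of $\Theta_{V_1} = L^2_t B_2^{-1/2, s_c-1/2} \cap L^{p_*}_t H^{s_c, p_*}_x \cap L^{2(d+2)}_{t,x}$ separately, with each norm controlled by $\|V\|_{L^2_t B_2^{-1/2, s_c-1/2}}$. The two non-Besov factors will be handled by an analogue of the first step in the proof of Proposition \ref{NLV1}, combined with the $\Theta_Z$ bound on $W_V(Y)$ already provided by Proposition \ref{proplin}; the Besov factor will instead be extracted from the Fourier-multiplier description of $L_1$ given in the preceding lemma.

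For the $L^{p_*}_t H^{s_c, p_*}_x$ and $L^{2(d+2)}_{t,x}$ norms, I would argue pointwise via Cauchy--Schwarz on the expectation,
$|\E[\bar Y W_V(Y)](t,x)| \leq \|Y(t,x,\cdot)\|_{L^2_\omega} \|W_V(Y)(t,x,\cdot)\|_{L^2_\omega}$,
then use the translation-invariance of the law of $Y$ to see that $\|\langle \nabla\rangle^j Y(t,x,\cdot)\|_{L^2_\omega}^2 = \int \langle\xi\rangle^{2j} |f(\xi)|^2 d\xi$ is constant in $(t,x)$ and finite for $0 \leq j \leq \lceil s_c\rceil$ thanks to $(A_1)$. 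Distributing the fractional derivative $\langle\nabla\rangle^{s_c}$ across the product $\bar Y \,W_V(Y)$ via Lemma \ref{interpolationargument} combined with Cauchy--Schwarz in $\omega$ (exactly as in the first part of the proof of Proposition \ref{NLV1}), I would obtain
$\|\E[\bar Y W_V(Y)]\|_{L^{p_*}_t H^{s_c, p_*}_x \cap L^{2(d+2)}_{t,x}} \lesssim \|W_V(Y)\|_{\Theta_Z}$,
and then invoke Proposition \ref{proplin} to conclude $\|W_V(Y)\|_{\Theta_Z} \lesssim \|V\|_{L^2_t B_2^{-1/2, s_c-1/2}}$.

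For the Besov norm, the preceding lemma identifies $L_1$ with the space--time Fourier multiplier of symbol $m(\omega,k) = 4\tilde m\, \hat{\underline w}(k)\, m_f(\omega,k)$. Since $\underline w$ is a finite measure, $\hat{\underline w} \in L^\infty_k$; moreover $m_f$ is uniformly bounded on $\R \times \R^d$ from its explicit expression together with hypothesis $(A_5)$, as exploited in step 1 of the proof of Proposition \ref{linearpropinversion1}. Hence $m \in L^\infty_{\omega,k}$, and Plancherel gives $L_1$ bounded on $L^2_{t,x}$. Because $L_1$ is a Fourier multiplier it commutes with each spatial dyadic projection $\Delta_j$, so $\|\Delta_j L_1 V\|_{L^2_{t,x}} \lesssim \|\Delta_j V\|_{L^2_{t,x}}$ for every $j$; summing with the Besov weights $2^{-j}$ on $j<0$ and $2^{j(2s_c-1)}$ on $j\geq 0$ yields the desired bound. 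The only slightly delicate point is the distribution of the fractional derivative in the non-Besov step, but once the $L^\infty_{t,x}L^2_\omega$ bounds on $\langle\nabla\rangle^j Y$ coming from $(A_1)$ are at hand this is essentially a copy of the argument already used for Proposition \ref{NLV1} and presents no new obstacle.
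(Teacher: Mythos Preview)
Your proposal is correct and follows essentially the same approach as the paper: for the non-Besov factors you use $\langle\nabla\rangle^{s_c}Y\in L^\infty_{t,x}L^2_\omega$ together with Lemma~\ref{interpolationargument} and Proposition~\ref{proplin}, exactly as the paper does, and for the Besov factor you argue directly via the bounded Fourier-multiplier symbol, which is precisely the content the paper cites from Proposition~\ref{linearpropinversion1}. The only cosmetic difference is that you unpack the multiplier boundedness explicitly rather than quoting it.
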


\begin{proof}
By Proposition \ref{linearpropinversion1} we have that $L_1$ is continuous from $L_t^2,B_2^{-\frac{1}{2},s_c-\frac{1}{2}}$ into $L_t^2,B_2^{-\frac{1}{2},s_c-\frac{1}{2}}$.

Using the fact that $\langle\nabla\rangle^{s_c} Y\in L_{t,x}^\infty,L_\omega^2$, we have by Lemma \ref{interpolationargument}: 

\begin{equation}\label{lineV1V2}
    \lvert\lvert \E[\Bar{Y}L_2(V)]\rvert\rvert_{L_{t,x}^{2(d+2)}\cap L_t^4,H_x^{s_c,2\frac{d}{d+1}}\cap L_t^{p_*},H_x^{s_c,p_*}}\lesssim \lvert\lvert L_2(V)\rvert\rvert_{L_{t,x}^{2(d+2)},L_\omega^2\cap L_t^4,H_x^{s_c,2\frac{d}{d+1}},L_\omega^2\cap L_t^{p_*},H_x^{s_c,p_*},L_\omega^2}.
\end{equation}

Using (\ref{lineV1V2}) and Proposition \ref{proplin} we get that $L_1$ is continuous from $L_t^2,B_2^{-\frac{1}{2},s_c-\frac{1}{2}} $ into $L_{t,x}^{2(d+2)}\cap L_t^{p_*},H_x^{s_c,p_*}$. 

Then $L_1$ is continuous from $L_t^2,B_2^{-\frac{1}{2},s_c-\frac{1}{2}} $ into $\Theta_{V_1}$.

\end{proof}

\begin{proposition}\label{corollin}
    $L_1$ is continuous from $\Theta_{V_2}$ into $\Theta_{V_1}$ and $1+L_1$ is continuous, invertible of continuous inverse from $\Theta_{V_1}$ into $\Theta_{V_1}$. 
\end{proposition}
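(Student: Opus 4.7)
The plan is to deduce everything from the already-established Proposition \ref{proplineV1V2} (that $L_1 : L_t^2, B_2^{-\frac{1}{2},s_c-\frac{1}{2}} \to \Theta_{V_1}$ is continuous) together with Proposition \ref{linearpropinversion1} (invertibility of $1+L_1$ on $L_t^2, B_2^{-\frac{1}{2},s_c-\frac{1}{2}}$). No new analytic input should be needed; the whole matter is a soft functional-analytic bootstrap using the fact that, by construction,
\[
\Theta_{V_1},\ \Theta_{V_2} \hookrightarrow L_t^2, B_2^{-\frac{1}{2},s_c-\frac{1}{2}}.
\]

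First I would handle continuity of $L_1$ from $\Theta_{V_2}$ into $\Theta_{V_1}$. Since by definition $\Theta_{V_2}$ is a subspace of $L_t^2, B_2^{-\frac{1}{2},s_c-\frac{1}{2}}$, the inclusion is continuous, and composing with the map from Proposition \ref{proplineV1V2} yields the desired continuity with a bound
\[
\|L_1 V\|_{\Theta_{V_1}} \lesssim \|V\|_{L_t^2, B_2^{-\frac{1}{2},s_c-\frac{1}{2}}} \lesssim \|V\|_{\Theta_{V_2}}.
\]
The exact same argument, applied with $V \in \Theta_{V_1}$ in place of $V \in \Theta_{V_2}$, shows that $L_1$ is continuous from $\Theta_{V_1}$ into itself, so that $1+L_1$ is at least a well-defined continuous operator on $\Theta_{V_1}$.

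Next I would prove invertibility on $\Theta_{V_1}$ by a fixed-point rewriting. Given $V \in \Theta_{V_1}$, I use Proposition \ref{linearpropinversion1} to define $W := (1+L_1)^{-1}V \in L_t^2, B_2^{-\frac{1}{2},s_c-\frac{1}{2}}$ so that $W = V - L_1 W$. On the right-hand side, $V \in \Theta_{V_1}$ by assumption, while $L_1 W \in \Theta_{V_1}$ by Proposition \ref{proplineV1V2}; hence $W \in \Theta_{V_1}$. This shows that $(1+L_1)^{-1}$ preserves $\Theta_{V_1}$, and the identity
\[
\|W\|_{\Theta_{V_1}} \le \|V\|_{\Theta_{V_1}} + \|L_1 W\|_{\Theta_{V_1}} \lesssim \|V\|_{\Theta_{V_1}} + \|W\|_{L_t^2,B_2^{-\frac{1}{2},s_c-\frac{1}{2}}} \lesssim \|V\|_{\Theta_{V_1}},
\]
where in the last step I use the continuity of $(1+L_1)^{-1}$ on $L_t^2, B_2^{-\frac{1}{2},s_c-\frac{1}{2}}$ together with the embedding $\Theta_{V_1} \hookrightarrow L_t^2, B_2^{-\frac{1}{2},s_c-\frac{1}{2}}$, establishes continuity of the inverse on $\Theta_{V_1}$.

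There is essentially no hard step here, only bookkeeping. The only point that requires care is making sure the decomposition $\Theta_V = \Theta_{V_1} + \Theta_{V_2}$ is read correctly, namely that each summand inherits the Besov component $L_t^2, B_2^{-\frac{1}{2},s_c-\frac{1}{2}}$ by intersection, so that both are embedded in the space on which Proposition \ref{linearpropinversion1} provides invertibility. If one were to work in the full space $\Theta_V$ rather than $\Theta_{V_1}$ separately, one would lose the clean bootstrap $W = V - L_1 W \in \Theta_{V_1}$ that closes the argument, which is why the decomposition is done this way.
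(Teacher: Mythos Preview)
Your proposal is correct and follows essentially the same approach as the paper: both use the embeddings $\Theta_{V_1},\Theta_{V_2}\hookrightarrow L_t^2,B_2^{-\frac{1}{2},s_c-\frac{1}{2}}$ together with Proposition~\ref{proplineV1V2} for the continuity statements, and then bootstrap the inverse via the identity $(1+L_1)^{-1}=1-L_1(1+L_1)^{-1}$, which you write pointwise as $W=V-L_1W$.
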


\begin{proof}
    By Proposition \ref{proplineV1V2} we get that $L_1$ is continuous from $\Theta_{V_2}$ into $\Theta_{V_1}$ because $\Theta_{V_2}$ is continuously embedded in $L_t^2,B_2^{-\frac{1}{2},s_c-\frac{1}{2}} $.

    Moreover, it follows from Proposition \ref{proplineV1V2} that $1+L_1$ is continuous from $\Theta_{V_1}$ into $\Theta_{V_1}$, because $\Theta_{V_1}$ is continuously embedded in $L_t^2,B_2^{-\frac{1}{2},s_c-\frac{1}{2}} $.

    To obtain that $1+L_1$ is invertible of continuous inverse from $\Theta_{V_1}$ into $\Theta_{V_1}$ we write that: $$(1+L_1)^{-1}=1-L_1(1+L_1)^{-1}.$$

    We use Proposition \ref{linearpropinversion1} that gives that $(1+L_1)^{-1}$ is continuous from $L_t^2,B_2^{-\frac{1}{2},s_c-\frac{1}{2}}$ into $L_t^2,B_2^{-\frac{1}{2},s_c-\frac{1}{2}}$ and then we use Proposition \ref{proplineV1V2} to get that $L_1(1+L_1)^{-1}$ is continuous from $L_t^2,B_2^{-\frac{1}{2},s_c-\frac{1}{2}} $ into $\Theta_{V_1}$. So $(1+L_1)^{-1}$ is continuous from $L_t^2,B_2^{-\frac{1}{2},s_c-\frac{1}{2}}$ into $\Theta_{V_1}$, therefore $(1+L_1)^{-1}$ is continuous from $\Theta_{V_1}$ into $\Theta_{V_1}$.
\end{proof}

\begin{proposition}\label{proplinpartinv}
    The operator $1-L$ is continuous, invertible with continuous inverse on $\Theta_Z\times \Theta_{V_1}\times \Theta_{V_2}$.
\end{proposition}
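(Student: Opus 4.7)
The plan is to exploit the triangular structure of $1-L$ with respect to the decomposition $(Z,V_1,V_2)$ and reduce the inversion to the already established Proposition \ref{corollin}. Reading off \eqref{linearpart}, and recalling that $L_1(V)=-4\tilde m\Re\E[\bar Y W_V(Y)]$ and $L_2(V)=-2\tilde m W_V(Y)$, one has
\[
(1-L)\begin{pmatrix} Z\\ V_1\\ V_2\end{pmatrix}=\begin{pmatrix} Z+L_2(V_1)+L_2(V_2)\\ V_1+L_1(V_1)+L_1(V_2)\\ V_2\end{pmatrix},
\]
so the system $(1-L)(Z,V_1,V_2)=(Z^\ast,V_1^\ast,V_2^\ast)$ decouples from bottom to top.

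The plan is then to solve it in three successive steps. First, the third line gives $V_2=V_2^\ast\in\Theta_{V_2}$ for free. Second, the $V_1$ equation rearranges to $(1+L_1)V_1=V_1^\ast-L_1(V_2^\ast)$; by Proposition \ref{corollin} the map $L_1$ is continuous from $\Theta_{V_2}$ into $\Theta_{V_1}$, so the right-hand side lies in $\Theta_{V_1}$, and the same proposition provides the continuous inverse of $1+L_1$ on $\Theta_{V_1}$. This yields $V_1=(1+L_1)^{-1}(V_1^\ast-L_1(V_2^\ast))\in\Theta_{V_1}$, and the dependence on $(V_1^\ast,V_2^\ast)$ is continuous. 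Third, the first line is then explicit: $Z=Z^\ast-L_2(V_1)-L_2(V_2^\ast)$. Since both $\Theta_{V_1}$ and $\Theta_{V_2}$ embed into $L^2_t B_2^{-1/2,s_c-1/2}$ and Proposition \ref{proplin} ensures the continuity of $L_2$ from this latter space into $\Theta_Z$, we obtain $Z\in\Theta_Z$, again with continuous dependence on the data.

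Stringing the three steps together produces a linear map $(Z^\ast,V_1^\ast,V_2^\ast)\mapsto(Z,V_1,V_2)$ which is the two-sided inverse of $1-L$, and whose continuity on $\Theta_Z\times\Theta_{V_1}\times\Theta_{V_2}$ follows from combining Propositions \ref{proplin}, \ref{proplineV1V2} and \ref{corollin}. Continuity of $1-L$ itself is immediate from the same three propositions. There is no substantial obstacle left at this stage: the hard analytic content, namely the Penrose-type inversion of $1+L_1$ and the dispersive estimate for $L_2$ against $Y$, is already encapsulated in Proposition \ref{corollin} and Proposition \ref{proplin} respectively; the present proof is only a triangular book-keeping argument.
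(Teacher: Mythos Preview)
Your proof is correct and follows essentially the same approach as the paper: both arguments exploit the block upper-triangular structure of $1-L$ and reduce the inversion to the continuity of $L_2$ (Proposition~\ref{proplin}) together with the invertibility of $1+L_1$ on $\Theta_{V_1}$ and the continuity of $L_1:\Theta_{V_2}\to\Theta_{V_1}$ (Proposition~\ref{corollin}). The only difference is cosmetic: you write out the back-substitution explicitly, whereas the paper states the matrix form and appeals directly to the block-triangular inversion.
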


\begin{proof}
    We have that $1-L$ is of the form: \begin{equation}
    1-L=\begin{pmatrix}
    1 &  L_2 & L_2 \\ 0 & 1+L_1 & L_1\\ 0 & 0 & 1
    \end{pmatrix}.
\end{equation} It is sufficient to show that $1+L_1$ is continous, invertible with continuous inverse from $\Theta_{V_1}$ to $\Theta_{V_1}$, that $L_2$ is continuous from $\Theta_{V_1}$ to $\Theta_Z$ and from $\Theta_{V_2}$ to $\Theta_Z$, and that $L_1$ is continuous from $\Theta_{V_2}$ to $\Theta_{V_1}$. We use Propositions \ref{proplin} and \ref{corollin} to conclude. 
\end{proof}

\section{Nonlinear terms}

In this part we tackle all of the remaining nonlinear terms in (\ref{nonlin}). Formally, we identify two types of nonlinearity: quadratic and cubic. This part is subdivised in two subsections, in the first one, we give estimates on the terms corresponding to the quadratic nonlinearity and, in the second subpart, to the ones corresponding to the cubic nonlinearity.

\subsection{Quadratic terms}

We begin this subpart by Proposition \ref{wvzprop}, in which we give estimates on the terms $W_{V_1}(Z)$ and $W_{V_2}(Z)$. To treat the other quadratic terms we need to introduce Lemma \ref{lemmeconvoldast}, which gives an inequality about the product: $(u,v)\to w\circledast(u,v)$. In Proposition \ref{quad} we give inequality estimates on $\Tilde{W}_{V_1+V_2,V_1+V_2}(Y)$.

\begin{proposition}\label{wvzprop} For all $Z\in\Theta_Z$ and $V=V_1+V_2\in\Theta_{V}$,

\begin{equation}\label{wvz}
    \lvert\lvert W_V(Z)\rvert\rvert_{\Theta_Z}\lesssim \lvert\lvert Z\rvert\rvert_{\Theta_Z}\lvert\lvert V\rvert\rvert_{\Theta_{V}},
\end{equation}
and, \begin{equation}\label{wvzb}
    \lvert\lvert Re\E(\Bar{Y}W_V(Z))\rvert\rvert_{\Theta_{V_1}}\lesssim\lvert\lvert V\rvert\rvert_{\Theta_{V}}\lvert\lvert Z\rvert\rvert_{\Theta_Z}.   
\end{equation}
\end{proposition}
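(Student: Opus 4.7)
Both estimates reduce to bounds on the source term $(\underline{w}*V)Z$ via the Strichartz-type Propositions \ref{NLZ} and \ref{NLV1} already proved. Specifically, to control $\|W_V(Z)\|_{\Theta_Z}$ it suffices to bound $\|(\underline{w}*V)Z\|_{L_t^{p_*'}H_x^{s_c,p_*'}L_\omega^2}$, and to control $\|\Re\E(\bar Y W_V(Z))\|_{\Theta_{V_1}}$ it suffices to bound that same norm together with $\|(\underline{w}*V)Z\|_{L_{t,x}^{2(d+2)/(d+6)}L_\omega^2}$. Since $\underline{w}$ is a finite measure, Young's inequality absorbs the convolution at no cost, so throughout I may replace $(\underline{w}*V)Z$ by $VZ$.

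Decomposing $V=V_1+V_2$ with $V_i\in\Theta_{V_i}$ (as is built into the definition of $\Theta_V$), I would treat the two contributions separately. For the $H_x^{s_c,p_*'}$ source norm, Lemma \ref{interpolationargument} lets the derivative $\langle\nabla\rangle^{s_c}$ fall on either factor, reducing the estimate to two Hölder pairings of the form $\|V_i\|_{L_t^{p_a}L_x^{q_a}}\|Z\|_{L_t^{p_b}H_x^{s_c,q_b}L_\omega^2}$ and its symmetric variant. The components that define $\Theta_{V_1}$ (namely $L_{t,x}^{2(d+2)}$ and $L_t^{p_*}H_x^{s_c,p_*}$) pair naturally with the matching Strichartz pieces of $\Theta_Z$; for $V_2\in\Theta_{V_2}$ the $L_{t,x}^{d+2}$ and $L_t^{2(d+2)/(d+1)}H_x^{s_c,2(d+2)/(d+1)}$ pieces pair with other Strichartz norms of $Z$. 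Whenever a Hölder partner on the $Z$ side is not one of the norms literally defining $\Theta_Z$, I would use a standard Sobolev embedding — in particular the critical embedding $H_x^{s_c,2d/(d-1)}\hookrightarrow L_x^q$ for any finite $q$ (valid since $s_c=d/p$ for $p=2d/(d-1)$), and $H_x^{s_c,p_*}\hookrightarrow L_x^\infty$ for $d\geq 4$ — to land in a norm controlled by $\Theta_Z$.

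The $L_{t,x}^{2(d+2)/(d+6)}$ bound needed for (\ref{wvzb}) is simpler, as no derivative is involved: one checks that the exponent $2(d+2)/(d+6)$ can be written as $1/p+1/q$ for each of $p$ chosen among the Lebesgue indices of $\Theta_{V_1}$ or $\Theta_{V_2}$, with $q$ accessible to $Z$ through some Strichartz norm of $\Theta_Z$ (again possibly after one Sobolev embedding).

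The main obstacle is exponent bookkeeping. For each $V_i$ ($i=1,2$), each of the two required source norms, and each of the two placements of the derivative in the Leibniz decomposition, one must produce an explicit Hölder quadruple that lands in a norm the spaces $\Theta_{V_i}$ and $\Theta_Z$ actually control. The indices appearing in the definitions of $\Theta_{V_1},\Theta_{V_2},\Theta_Z$ are clearly engineered so that each such sub-case closes, but running through all of them simultaneously and checking that the relevant Sobolev or Besov embedding is available at the right critical scaling is the substantive computational content of the proof.
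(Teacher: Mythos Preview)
Your plan is correct and matches the paper's proof almost exactly: reduce via Propositions \ref{NLZ} and \ref{NLV1} to the two source norms, absorb the convolution by Young, split $V=V_1+V_2$, and close each piece with Lemma \ref{interpolationargument} plus H\"older. The only tactical difference is how the intermediate Lebesgue norms on the $Z$ and $V_i$ sides are obtained: the paper does not invoke the critical embeddings $H_x^{s_c,2d/(d-1)}\hookrightarrow L_x^q$ or $H_x^{s_c,p_*}\hookrightarrow L_x^\infty$ at all, but simply interpolates along the $L_{t,x}^p$ scale between the two Lebesgue components already present in each space (e.g.\ $p_*\le \tfrac{d+2}{2}\le 2(d+2)$ gives $\Theta_{V_1},\Theta_Z\hookrightarrow L_{t,x}^{(d+2)/2}$, and $\Theta_V\hookrightarrow L_{t,x}^2$ for the second source norm). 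This keeps the time and space exponents equal throughout and makes the bookkeeping shorter than pairing the $L_t^4 H_x^{s_c,2d/(d-1)}$ piece with something else; your route would also close, but the paper's interpolation is the cleaner way to execute the exponent-matching you flag as the main content.
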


\begin{proof}
By Proposition \ref{NLZ}:
$$\lvert\lvert W_V(Z)\rvert\rvert_{\Theta_Z}\lesssim \lvert\lvert(\underline{w}*V)Z\rvert\rvert_{L^{p_*'},H^{s_c,p_*'},L_\omega^2},$$

and by Proposition \ref{NLV1} 

$$\lvert\lvert Re\E(\Bar{Y}W_V(Z))\rvert\rvert_{\Theta_{V_1}}\lesssim \lvert\lvert(\underline{w}*V)Z\rvert\rvert_{L^{p_*'},H^{s_c,p_*'},L_\omega^2}+\lvert\lvert (\underline{w}*V)Z\rvert\rvert_{L_{t,x}^{\frac{2(d+2)}{d+6}},L_\omega^2}.$$

It suffices to control $\lvert\lvert(\underline{w}*V)Z\rvert\rvert_{L^{p_*'},H^{s_c,p_*'},L_\omega^2}$ and $\lvert\lvert (\underline{w}*V)Z\rvert\rvert_{L_{t,x}^{\frac{2(d+2)}{d+6}},L_\omega^2}$ in order to conclude.

\textbf{First estimate:} We begin with $\lvert\lvert(\underline{w}*V)Z\rvert\rvert_{L^{p_*'},H^{s_c,p_*'},L_\omega^2}$, we have: $$\lvert\lvert(\underline{w}*V)Z\rvert\rvert_{L^{p_*'},H^{s_c,p_*'},L_\omega^2}\leq \lvert\lvert(\underline{w}*V_1)Z\rvert\rvert_{L^{p_*'},H^{s_c,p_*'},L_\omega^2}+\lvert\lvert(\underline{w}*V_2)Z\rvert\rvert_{L^{p_*'},H^{s_c,p_*'},L_\omega^2}.`$$ 
We use Lemma \ref{interpolationargument} and we get by adding Young's inequality:

$$\lvert\lvert(\underline{w}*V_1)Z\rvert\rvert_{L^{p_*'},H^{s_c,p_*'},L_\omega^2}\lesssim \lvert \lvert V_1\rvert\rvert_{L_t^{p_*},H^{s_c,p_*}\cap L_{t,x}^{\frac{d+2}{2}} }\lvert\lvert Z\rvert\rvert_{L_{t,x}^{\frac{d+2}{2}},L_\omega^2\cap L_t^{p_*},H^{s_c,p_*},L_\omega^2},$$ because $\frac{1}{p_*}+\frac{2}{d+2}=\frac{1}{p_*'}=\frac{d+4}{2(d+2)}$.

Then we have that $p_*=\frac{2(d+2)}{d}\leq \frac{d+2}{2}\leq 2(d+2)$, so $\Theta_{V_1} $ is continuously embedded in $ L_{t,x}^{\frac{d+2}{2}}$ and $\Theta_{Z} $ is continuously embedded in $ L_{t,x}^{\frac{d+2}{2}},L_\omega^2$. Thus:

$$\lvert\lvert(\underline{w}*V_1)Z\rvert\rvert_{L^{p_*'},H^{s_c,p_*'},L_\omega^2}\lesssim\lvert\lvert V_1\rvert\rvert_{\Theta_{V_1}}\lvert\lvert Z\rvert\rvert_{\Theta_Z}.$$

And also by Lemma \ref{interpolationargument} and Young's inequality, we have: $$\lvert\lvert(\underline{w}*V_2)Z\rvert\rvert_{L^{p_*'},H^{s_c,p_*'},L_\omega^2}\lesssim \lvert \lvert  V_2\rvert\rvert_{L_t^{\frac{2(d+2)}{d+1}},H_x^{s_c,\frac{2(d+2)}{d+1}}\cap L_{t,x}^{\frac{d+2}{2}}}\lvert\lvert Z\rvert\rvert_{L_{t,x}^{\frac{2(d+2)}{3}},L_\omega^2\cap L_t^{p_*},H_x^{s_c,p_*},L_\omega^2},$$ because $\frac{3}{2(d+2)}+\frac{d+1}{2(d+2)}=\frac{1}{p_*'}=\frac{d+4}{2(d+2)}$.

But because $p_*\leq \frac{2(d+2)}{3}\leq 2(d+2)$, $\Theta_Z$ is continuously embedded in  $ L_{t,x}^{\frac{2(d+2)}{3}},L_\omega^2$, and because $2\leq \frac{d+2}{2}\leq d+2$, $\Theta_{V_2}$ is continuously embedded in $L_{t,x}^{\frac{d+2}{2}}$. Combining the inequality just above and the continuity estimates gives: 
$$\lvert\lvert(\underline{w}*V_2)Z\rvert\rvert_{L^{p_*'},H^{s_c,p_*'},L_\omega^2}\lesssim\lvert\lvert V_2\rvert\rvert_{\Theta_{V_2}}\lvert\lvert Z\rvert\rvert_{\Theta_Z}.$$

Thus, we get: 
$$\lvert\lvert(\underline{w}*V)Z\rvert\rvert_{L^{p_*'},H^{s_c,p_*'},L_\omega^2}\lesssim\lvert\lvert V\rvert\rvert_{\Theta_{V}}\lvert\lvert Z\rvert\rvert_{\Theta_Z}.$$
\textbf{Second estimate} To estimate $\lvert\lvert (\underline{w}*V)Z\rvert\rvert_{L_{t,x}^{\frac{2(d+2)}{d+6}},L_\omega^2}$ we use the Hölder inequality and Young's inequality to get: 

$$\lvert\lvert (\underline{w}*V)Z\rvert\rvert_{L_{t,x}^{\frac{2(d+2)}{d+6}},L_\omega^2}\lesssim \lvert\lvert V \lvert\lvert_{L_{t,x}^2}\lvert\lvert Z\rvert\rvert_{L_{t,x}^{\frac{d+2}{2}},L_\omega^2},$$

because $\frac{d+6}{2(d+2)}=\frac{1}{2}+\frac{2}{d+2}$. 

We have that $\Theta_{V}$ is continuously embedded in $L_{t,x}^2$, and because $p_*=\frac{2(d+2)}{d}\leq \frac{d+2}{2}\leq 2(d+2)$,  $\Theta_{Z} $ is continuously embedded in $ L_{t,x}^{\frac{d+2}{2}},L_\omega^2$. Then: 

$$\lvert\lvert (\underline{w}*V)Z\rvert\rvert_{L_{t,x}^{\frac{2(d+2)}{d+6}},L_\omega^2}\lesssim \lvert\lvert V\rvert\rvert_{\Theta_{V}}\lvert\lvert Z\rvert\rvert_{\Theta_Z}.$$

Combining the inequality above gives the result.

\end{proof}

Before we continue we give an inequality lemma about the product $\circledast$.

\begin{lemma}\label{lemmeconvoldast} Let $s\geq 0$ and $r\geq 1$. For any $p_1,p_2,q_1,q_2\in [1,\infty]$ such that: $$\frac{1}{r}=\frac{1}{p_1}+\frac{1}{q_1}=\frac{1}{p_2}+\frac{1}{q_2}$$ and for $(V_1,V_2)\in L^{p_1}\cap H^{s,p_2}\times L^{q_2}\cap H^{s,q_1}$ we have the following estimate: $$\lvert\lvert w\circledast(V_1,V_2)\rvert\rvert_{H^{s,r}}\lesssim \lvert\lvert V_1\rvert\rvert_{L^{p_1}\cap H^{s,p_2}}\lvert\lvert V_2\rvert\rvert_{L^{q_2}\cap H^{s,q_1}}.$$

\end{lemma}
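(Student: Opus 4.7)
The plan is to exploit the hypothesis that $w$ is a finite measure on $(\R^d)^2$, combined with the fractional Leibniz rule already stated in Lemma \ref{interpolationargument}. By the change of variables $u_i = x_i - x$ in the defining integral, I first rewrite
$$w\circledast(V_1,V_2)(x) = \int_{(\R^d)^2} V_1(x+u_1)V_2(x+u_2)\, dw(u_1,u_2),$$
so that $w\circledast(V_1,V_2)$ appears as a superposition, weighted by $w$, of pointwise products of shifted copies of $V_1$ and $V_2$. Denoting $\tau_h f := f(\cdot+h)$, this reads $w\circledast(V_1,V_2) = \int \tau_{-u_1}V_1 \cdot \tau_{-u_2}V_2 \, dw(u_1,u_2)$.

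For each fixed $(u_1,u_2)$, applying Lemma \ref{interpolationargument} to the pointwise product and using translation invariance of the $L^p$ and $H^{s,p}$ norms yields
$$\lvert\lvert \tau_{-u_1}V_1 \cdot \tau_{-u_2}V_2\rvert\rvert_{H^{s,r}} \lesssim \lvert\lvert V_1\rvert\rvert_{L^{p_1}\cap H^{s,p_2}}\lvert\lvert V_2\rvert\rvert_{L^{q_2}\cap H^{s,q_1}},$$
with an implicit constant independent of $(u_1,u_2)$. I would then integrate against $d\lvert w\rvert$ and invoke Minkowski's integral inequality for the Banach space $H^{s,r}$ (valid since $r\geq 1$), writing $dw = g\,d\lvert w\rvert$ with $\lvert g\rvert = 1$ to handle the possibility that $w$ is signed or complex:
$$\lvert\lvert w\circledast(V_1,V_2)\rvert\rvert_{H^{s,r}} \leq \int \lvert\lvert \tau_{-u_1}V_1 \cdot \tau_{-u_2}V_2\rvert\rvert_{H^{s,r}}\, d\lvert w\rvert(u_1,u_2) \lesssim \lvert w\rvert((\R^d)^2)\,\lvert\lvert V_1\rvert\rvert_{L^{p_1}\cap H^{s,p_2}}\lvert\lvert V_2\rvert\rvert_{L^{q_2}\cap H^{s,q_1}},$$
and the finiteness hypothesis on $w$ closes the estimate.

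The main technical point is the rigorous justification of Minkowski's inequality for a possibly complex finite measure with an $H^{s,r}$-valued integrand; the polar decomposition reduces this to the positive finite measure $\lvert w\rvert$, for which the standard Banach-valued version applies, and strong measurability of the map $(u_1,u_2)\mapsto \tau_{-u_1}V_1 \cdot \tau_{-u_2}V_2$ into $H^{s,r}$ follows from continuity of translations on $L^p$ and $H^{s,p}$ for $1\leq p<\infty$ together with a density argument (with the borderline $p=\infty$ case handled separately by a direct pointwise bound).
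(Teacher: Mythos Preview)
Your proof is correct and follows essentially the same route as the paper: a change of variables expressing $w\circledast(V_1,V_2)$ as a $w$-weighted superposition of products of translates, then the fractional Leibniz rule (Lemma \ref{interpolationargument}) applied pointwise in $(u_1,u_2)$, and finally integration against the finite measure. The paper phrases the step of bringing the $H^{s,r}$-norm inside the integral via duality (pairing with $h\in L^{r'}$ and using H\"older in the inner variable) rather than invoking Minkowski's integral inequality for Banach-valued integrands, but these are equivalent devices; your discussion of the polar decomposition and strong measurability is in fact more careful than the paper's treatment.
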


\begin{proof}
We use an argument of duality by working on the norm $\lvert\lvert\langle \nabla\rangle^s (w\circledast(V_1,V_2))\rvert\rvert_{L^r}$. Let's take $h\in L^{r'}$. 

We have by a change of variables:

$$w\circledast(V_1,V_2)(\tau)=\int w(x_1-x_2,x_2)V_1(x_1+\tau)V_2(x_2+\tau)dx_1dx_2.$$

Then:

\begin{equation}\label{changevar}
    \lvert \int \langle \nabla\rangle^s(w\circledast(V_1,V_2))(\tau)h(\tau)d\tau\rvert \leq \int \lvert w(x_1-x_2,x_2)\langle \nabla\rangle^s(V_1(x_1+\tau)V_2(x_2+\tau))h(\tau)\rvert d\tau dx_1dx_2.
\end{equation}

Using the Hölder inequality on the integral over $\tau$ gives: 

\begin{equation}\label{Holderconvoldast}
    \lvert \int w\circledast(V_1,V_2)(\tau)h(\tau)d\tau\rvert \leq \int \lvert w(x_1-x_2,x_2)\rvert \lvert \lvert x\mapsto \langle \nabla\rangle^s(V_1(x_1+x)V_2(x_2+x))\rvert\rvert_{L^r}dx_1dx_2\lvert\lvert h \rvert\rvert_{L^{r'}}.
\end{equation}

By duality, we already have that: 
$$\lvert\lvert w\circledast(V_1,V_2)\rvert\rvert_{H^{s,r}}\leq \int \lvert w(x_1-x_2,x_2)\rvert \lvert \lvert x\mapsto V_1(x_1+x)V_2(x_2+x)\rvert\rvert_{H^{s,r}}dx_1dx_2.$$

Using Lemma \ref{interpolationargument} gives: 

$$\lvert\lvert w\circledast(V_1,V_2)\rvert\rvert_{L^r}\leq \int \lvert w(x_1-x_2,x_2)\rvert \lvert \lvert x\mapsto V_1(x_1+x)\rvert\rvert_{L^{p_1}\cap H^{s,p_2}}\lvert \lvert x\mapsto V_2(x_2+x)\rvert\rvert_{L^{q_2}\cap H^{s,q_1}}dx_1dx_2.$$

And we get:
$$\lvert\lvert w\circledast(V_1,V_2)\rvert\rvert_{L^r}\leq \int \lvert w(x_1-x_2,x_2)\rvert dx_1dx_2\lvert\lvert V_1\rvert\rvert_{L^{p_1}\cap H^{s,p_2}}\lvert\lvert V_2\rvert\rvert_{L^{q_2}\cap H^{s,q_1}}.$$

The lemma is then proved.
\end{proof}
\bigskip 

\begin{proposition}\label{quad}
    For all $u,v\in \Theta_{V}$,

    \begin{equation}\label{quadZ}
        \lvert\lvert \Tilde{W}_{u,v}(Y)\rvert\rvert_{\Theta_Z}\lesssim \lvert\lvert u\rvert\rvert_{\Theta_{V}}\lvert\lvert v\rvert\rvert_{\Theta_{V}},
    \end{equation} and 

    \begin{equation}\label{quadV}
        \lvert\lvert Re\E(\Tilde{W}_{u,v}(Y))\rvert\rvert_{\Theta_{V_1}}\lesssim \lvert\lvert u\rvert\rvert_{\Theta_{V}}\lvert\lvert v\rvert\rvert_{\Theta_{V}}.
    \end{equation}
\end{proposition}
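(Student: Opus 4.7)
The proof splits naturally into the two declared estimates, both of which follow from the linear theory of Sections 4--5 combined with Lemma \ref{lemmeconvoldast}. For the first bound (\ref{quadZ}), I would unfold the definition of $\Theta_Z$ as the intersection of four spaces and invoke Proposition \ref{prop52} with $j=0$ on each factor separately: taking the quadruples $(p_1,q_1,\sigma_1,\sigma)$ equal to $(\infty,2,0,s_c)$, $(p_*,p_*,0,s_c)$, $(4,\frac{2d}{d-1},0,s_c)$, and $(2(d+2),2(d+2),s_c,0)$, followed by the Christ--Kiselev lemma to pass from integration on $(0,\infty)$ to $(0,t)$, reduces (\ref{quadZ}) to the single master estimate
\[\lvert\lvert w\circledast(u,v)\rvert\rvert_{L_t^2,B_2^{-\frac{1}{2},s_c-\frac{1}{2}}} \lesssim \lvert\lvert u\rvert\rvert_{\Theta_V}\lvert\lvert v\rvert\rvert_{\Theta_V}.\]

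For the second bound (\ref{quadV}), I would read the left-hand side as $Re\,\E[\Bar{Y}\,\Tilde{W}_{u,v}(Y)]$ (the unweighted expectation $\E[\Tilde{W}_{u,v}(Y)]$ vanishes because $\E[Y]=0$), and apply Proposition \ref{NLV1} with $W=-i(w\circledast(u,v))Y$. Using that $\langle\nabla\rangle^j Y\in L^\infty_{t,x},L^2_\omega$ for $0\le j\le\lceil s_c\rceil$ as a consequence of $(A_4)$, together with the interpolated Leibniz rule of Lemma \ref{interpolationargument} (applied with $(p_1,q_1)=(p_*',\infty)$), the factor $Y$ is absorbed and the task reduces to
\[\lvert\lvert w\circledast(u,v)\rvert\rvert_{L_t^{p_*'},H_x^{s_c,p_*'}} + \lvert\lvert w\circledast(u,v)\rvert\rvert_{L_{t,x}^{\frac{2(d+2)}{d+6}}} \lesssim \lvert\lvert u\rvert\rvert_{\Theta_V}\lvert\lvert v\rvert\rvert_{\Theta_V}.\]

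These three deterministic estimates on $w\circledast(u,v)$ constitute the substantive content of the proof. For each I decompose $u=u_1+u_2$, $v=v_1+v_2$ according to $\Theta_V=\Theta_{V_1}+\Theta_{V_2}$ and apply Lemma \ref{lemmeconvoldast}, which, thanks to $w$ being a finite measure, transfers product estimates from $L^p\cap H^{s,q}$ norms of $u,v$ to $H^{s,r}$ norms of $w\circledast(u,v)$ with exponents paralleling those used for the convolution $\underline{w}*V$ in the proof of Proposition \ref{wvzprop}. For the Besov estimate I perform a dyadic low/high split exactly as in the proof of Proposition \ref{propembedding}: the low-frequency piece is bounded by $\lvert\lvert\cdot\rvert\rvert_{L^2_t\dot H^{-1/2}_x}$ and then by $\lvert\lvert\cdot\rvert\rvert_{L^2_t L^{2d/(d+1)}_x}$ via Sobolev embedding, matching (\ref{embedlf}); the high-frequency piece is bounded directly in $L^2_t H^{s_c-\frac{1}{2}}_x$, as in (\ref{embedhf}).

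The main obstacle will be the bookkeeping imposed by the asymmetry of $\Theta_{V_1}$ and $\Theta_{V_2}$ at high frequencies (the feature singled out in the introduction): the four cross-terms $w\circledast(u_i,v_j)$, $i,j\in\{1,2\}$, have to be estimated each with distinct Hölder exponents, and for the Besov norm the low- and high-frequency regimes furthermore demand different choices. Each cross-term nevertheless closes because Lemma \ref{lemmeconvoldast} ultimately reduces it to a product of norms on $u_i$ and $v_j$ that are controlled by $\Theta_V$ through interpolation and Sobolev embedding, while the convolution structure is absorbed by the finiteness of $w$ as a measure.
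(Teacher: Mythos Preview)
Your treatment of (\ref{quadV}) matches the paper's: apply Proposition~\ref{NLV1} with $W=(w\circledast(u,v))Y$, strip $Y$ using $\langle\nabla\rangle^{s_c}Y\in L^\infty_{t,x}L^2_\omega$, and estimate $\|w\circledast(u,v)\|_{L^{p_*'}_tH^{s_c,p_*'}_x}$ and $\|w\circledast(u,v)\|_{L^{2(d+2)/(d+6)}_{t,x}}$ via Lemma~\ref{lemmeconvoldast} with the four-way split $u_iv_j$.

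The difference is in (\ref{quadZ}). The paper does \emph{not} go through Proposition~\ref{prop52}; it uses Proposition~\ref{NLZ} instead, reducing (\ref{quadZ}) to the very same $L^{p_*'}_tH^{s_c,p_*'}_x$ estimate already required for (\ref{quadV}). Your route via Proposition~\ref{prop52} reduces (\ref{quadZ}) to the extra ``master estimate'' $\|w\circledast(u,v)\|_{L^2_tB_2^{-1/2,s_c-1/2}}$, and this is where a genuine gap appears. The low-frequency step you propose---bound by $\|\cdot\|_{L^2_t\dot H^{-1/2}_x}$, then by $\|\cdot\|_{L^2_tL^{2d/(d+1)}_x}$, then H\"older---fails for the $(u_1,v_1)$ cross-term. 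Indeed, it would require $u_1\in L^{a_1}_tL^{p_1}_x$, $v_1\in L^{a_2}_tL^{p_2}_x$ with $\tfrac{1}{a_1}+\tfrac{1}{a_2}=\tfrac12$ and $\tfrac{1}{p_1}+\tfrac{1}{p_2}=\tfrac{d+1}{2d}$. But the Lebesgue embeddings available from $\Theta_{V_1}$ (interpolating $L^{p_*}_tH^{s_c,p_*}_x\subset L^{p_*}_tL^r_x$, $r\ge p_*$, with $L^{2(d+2)}_{t,x}$) all satisfy $p_i\ge a_i$, hence $\tfrac{1}{p_1}+\tfrac{1}{p_2}\le\tfrac{1}{a_1}+\tfrac{1}{a_2}=\tfrac12<\tfrac{d+1}{2d}$. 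The Besov component $L^2_tB_2^{-1/2,s_c-1/2}$ of $\Theta_{V_1}$ does not rescue this, since at low frequencies it only yields $\dot H^{-1/2}$, which embeds into no Lebesgue space. The analogy with Proposition~\ref{proplin} breaks down precisely because there $\underline w*V$ is a convolution (so Young transfers Besov to Besov directly), whereas $w\circledast(u,v)$ is product-like and does not inherit negative-regularity control from its factors.

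The fix is simply to adopt the paper's shortcut: use Proposition~\ref{NLZ} for (\ref{quadZ}) as well, so that both estimates reduce to the two deterministic bounds on $w\circledast(u,v)$ that you already intend to prove for (\ref{quadV}).
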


\begin{proof}
    By Proposition \ref{NLZ}:
$$\lvert\lvert \Tilde{W}_{u,v}(Y)\rvert\rvert_{\Theta_Z}\lesssim \lvert\lvert (w\circledast (u,v))Y\rvert\rvert_{L^{p_*'},H^{s_c,p_*'},L_\omega^2},$$

and by Proposition \ref{NLV1} 
$$\lvert\lvert Re\E(\Bar{Y}\Tilde{W}_{u,v}(Y))\rvert\rvert_{\Theta_{V_1}}\lesssim \lvert\lvert(w\circledast (u,v))Y\rvert\rvert_{L^{p_*'},H^{s_c,p_*'},L_\omega^2}+\lvert\lvert (w\circledast (u,v))Y\rvert\rvert_{L_{t,x}^{\frac{2(d+2)}{d+6}},L_\omega^2}.$$

So, we give estimates on $\lvert\lvert(w\circledast (u,v))Y\rvert\rvert_{L^{p_*'},H^{s_c,p_*'},L_\omega^2}$ and $\lvert\lvert (w\circledast (u,v))Y\rvert\rvert_{L_{t,x}^{\frac{2(d+2)}{d+6}},L_\omega^2}$.

\textbf{First estimate} We begin by giving an estimate of the term $\lvert\lvert (w\circledast (u,v))Y\rvert\rvert_{L^{p_*'},H^{s_c,p_*'},L_\omega^2}$.

By hypothesis on $f$, $\langle\nabla\rangle^{s_c} Y\in L_{t,x}^\infty,L_\omega^2$, so we have by Lemma \ref{interpolationargument}: $$\lvert\lvert (w\circledast (u,v))Y\rvert\rvert_{L^{p_*'},H^{s_c,p_*'},L_\omega^2}\lesssim \lvert\lvert w\circledast (u,v)\rvert\rvert_{L^{p_*'},H^{s_c,p_*'}}.$$   

We denote $u=u_1+u_2\in \Theta_{V_1}+\Theta_{V_2}$ and $v=v_1+v_2\in \Theta_{V_1}+\Theta_{V_2}$ and we have:

\begin{eqnarray*}
    \lvert\lvert w\circledast (u,v)\rvert\rvert_{L^{p_*'},H^{s_c,p_*'},L_\omega^2}&\leq & \lvert\lvert w\circledast (u_1,v_1)\rvert\rvert_{L^{p_*'},H^{s_c,p_*'},L_\omega^2}+\lvert\lvert w\circledast (u_2,v_1)\rvert\rvert_{L^{p_*'},H^{s_c,p_*'},L_\omega^2}\\
    & & +\lvert\lvert w\circledast (u_1,v_2)\rvert\rvert_{L^{p_*'},H^{s_c,p_*'},L_\omega^2}+\lvert\lvert w\circledast (u_2,v_2)\rvert\rvert_{L^{p_*'},H^{s_c,p_*'},L_\omega^2}.
\end{eqnarray*}

Using Lemma \ref{lemmeconvoldast}, we get:

 $$\lvert\lvert w\circledast(u_1,v_1)\rvert\rvert_{L^{p_*'},H_x^{s_c,p_*'}}\lesssim \lvert\lvert u_1\rvert\rvert_{L_t^{p_*},H_x^{s_c,p_*}\cap L_{t,x}^{\frac{d+2}{2}}}\lvert\lvert v_2\rvert\rvert_{L_t^{p_*},H_x^{s_c,p_*}\cap L_{t,x}^{\frac{d+2}{2}}},$$ because $\frac{1}{p_*}+\frac{2}{d+2}=\frac{1}{p_*'}=\frac{d+4}{2(d+2)}$.
 
 And because $\Theta_{V_1}$ is continuously embedded in $L_{t,x}^{\frac{d+2}{2}}$, we get: $$\lvert\lvert w\circledast(u_1,v_1)\rvert\rvert_{L^{p_*'},H_x^{s_c,p_*'}}\lesssim \lvert\lvert u_1\rvert\rvert_{L_t^{p_*},H_x^{s_c,p_*}\cap L_{t,x}^{\frac{d+2}{2}}}\lvert\lvert v_1\rvert\rvert_{L_t^{p_*},H_x^{s_c,p_*}\cap L_{t,x}^{\frac{d+2}{2}}}\leq \lvert\lvert u_1\rvert\rvert_{\Theta_{V_1}}\lvert\lvert v_1\rvert\rvert_{\Theta_{V_1}}.$$

Moreover, by Lemma \ref{lemmeconvoldast} we get: 

$$\lvert\lvert w\circledast(u_1,v_2)\rvert\rvert_{L^{p_*'},H_x^{s_c,p_*'}}\lesssim \lvert\lvert u_1\rvert\rvert_{L_t^{p_*},H_x^{s_c,p_*}\cap L_{t,x}^{\frac{2(d+2)}{3}}}\lvert\lvert v_2\rvert\rvert_{L_{t,x}^{\frac{d+2}{2}}\cap L_t^{\frac{2(d+2)}{d+1}},H_x^{s_c,\frac{2(d+2)}{d+1}}},$$ because $\frac{1}{p_*}+\frac{2}{d+2}=\frac{1}{p_*'}=\frac{d+4}{2(d+2)}$ and $\frac{3}{2(d+2)}+\frac{d+1}{2(d+2)}=\frac{1}{p_*'}=\frac{d+4}{2(d+2)}$. 

But $\Theta_{V_1}$ is continuously embedded in $L_{t,x}^{\frac{2(d+2)}{3}}$, and $\Theta_{V_2}$ is continuously embedded in $L_{t,x}^{\frac{d+2}{2}}$, so:

$$\lvert\lvert w\circledast(u_1,v_2)\rvert\rvert_{L^{p_*'},H_x^{s_c,p_*'}}\lesssim \lvert\lvert u_1\rvert\rvert_{\Theta_{V_1}}\lvert\lvert v_2\rvert\rvert_{\Theta_{V_2}}.$$

By symmetry of the product $(u,v)\to w\circledast(u,v)$ we have similarly: 

$$\lvert\lvert w\circledast(u_2,v_1)\rvert\rvert_{L^{p_*'},H_x^{s_c,p_*'}}\lesssim \lvert\lvert v_1\rvert\rvert_{\Theta_{V_1}}\lvert\lvert u_2\rvert\rvert_{\Theta_{V_2}}.$$

Lastly, using again Lemma \ref{lemmeconvoldast} gives: 

$$\lvert\lvert w\circledast(u_2,v_2)\rvert\rvert_{L^{p_*'},H_x^{s_c,p_*'},L_\omega^2}\lesssim \lvert\lvert u_2\rvert\rvert_{L_t^{\frac{2(d+2)}{d+1}},H_x^{s_c,\frac{2(d+2)}{d+1}}\cap L_{t,x}^{\frac{2(d+2)}{3}}}\lvert\lvert v_2\rvert\rvert_{L_t^{\frac{2(d+2)}{d+1}},H_x^{s_c,\frac{2(d+2)}{d+1}}\cap L_{t,x}^{\frac{2(d+2)}{3}}},$$ because $\frac{d+1}{2(d+2)}+\frac{3}{2(d+2)}=\frac{1}{p_*'}=\frac{d+4}{2(d+2)}$, 

Using that $\Theta_{V_2}$ is continuously embedded in $L_{t,x}^{\frac{2(d+2)}{3}} $, we get: 

$$\lvert\lvert w\circledast(u_2,v_2)\rvert\rvert_{L^{p_*'},H_x^{s_c,p_*'},L_\omega^2}\lesssim \lvert\lvert u_2\rvert\rvert_{\Theta_{V_2}}\lvert\lvert v_2\rvert\rvert_{\Theta_{V_2}}.$$

Thus, by combining the inequalities, we finally have: 
 $$\lvert\lvert (w\circledast (u,v))Y\rvert\rvert_{L^{p_*'},H^{s_c,p_*'},L_\omega^2}\lesssim\lvert\lvert u\rvert\rvert_{\Theta_{V}}\lvert\lvert v\rvert\rvert_{\Theta_{V}}.$$

\textbf{Second estimate} We have that $Y\in L_{t,x}^\infty,L\omega^2$ so:

$$\lvert\lvert (w\circledast (u,v))Y\rvert\rvert_{L_{t,x}^{\frac{2(d+2)}{d+6}},L_\omega^2}\lesssim \lvert\lvert w\circledast (u,v)\rvert\rvert_{L_{t,x}^{\frac{2(d+2)}{d+6}},L_\omega^2}.$$

By Lemma \ref{lemmeconvoldast} we have: 

    $$\lvert\lvert w\circledast(u,v)\rvert\rvert_{L_{t,x}^{\frac{2(d+2)}{d+6}}}\lesssim \lvert\lvert u\rvert\rvert_{L_{t,x}^2}\lvert\lvert v\rvert\rvert_{L_{t,x}^\frac{d+2}{2}}.$$

    Then we use the fact that $\Theta_{V_1}$ and $\Theta_{V_2}$ are continuously embedded in $L_{t,x}^{\frac{d+2}{2}}$ and in $L_{t,x}^2$, and thus $\Theta_{V}$ is continuously embedded in $L_{t,x}^{\frac{d+2}{2}}$ and in $L_{t,x}^2$. We get: 

    $$\lvert\lvert w\circledast(u,v)\rvert\rvert_{L_{t,x}^{\frac{2(d+2)}{d+6}}}\lesssim \lvert\lvert u\rvert\rvert_{\Theta_{V}}\lvert\lvert v\rvert\rvert_{\Theta_{V}}.$$

We conclude by combining the estimates above.

\end{proof}

\subsection{Cubic terms}

We are left with the formally cubic terms of (\ref{nonlin}). Precisely, we give estimates on $\Tilde{W}_{V_1+V_2,V_1+V_2}(Z)$. This is the subject of Proposition \ref{cub2}. But firstly we give the following lemma, useful for the control of those terms. 

\begin{lemma}\label{lemmeconvoldast2}
    Let $s\geq 0$ and $r\geq 1$. Let $r_1,r_2\geq 1$ and $p_i,q_i\geq 1$, $i=1,2,3$, be such that, if $r_3=r_2$:
    $$\frac{1}{r}=\frac{1}{r_i}+\frac{1}{p_i}+\frac{1}{q_i}.$$

    Let $V_1\in H^{s,r_1}\cap L^{r_2}$, $V_2\in L^{p_1}\cap H^{s,p_2}\cap L^{p_3}$ and $Z\in L^{q_1}\cap L^{q_2}\cap H^{s,q_3}$. Then $(w\circledast(V_1,V_2))Z\in H^{s,r}$ and: $$\lvert\lvert (w\circledast(V_1,V_2))Z\rvert\rvert_{H^{s,r}}\lesssim \lvert\lvert V_1\rvert\rvert_{H^{s,r_1}\cap L^{r_2}}\lvert\lvert V_2\rvert\rvert_{L^{p_1}\cap H^{s,p_2}\cap L^{p_3}}\lvert\lvert Z\rvert\rvert_{L^{q_1}\cap L^{q_2}\cap H^{s,q_3}}.$$
\end{lemma}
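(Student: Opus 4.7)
My plan is to adapt the proof of Lemma \ref{lemmeconvoldast} by incorporating the third factor $Z$. First I would write
$$(w\circledast(V_1,V_2))(\tau)Z(\tau)=\int w(x_1-x_2,x_2)\,V_1(x_1+\tau)V_2(x_2+\tau)Z(\tau)\,dx_1\,dx_2$$
and apply Minkowski's integral inequality to get
$$\lvert\lvert(w\circledast(V_1,V_2))Z\rvert\rvert_{H^{s,r}}\leq \int\lvert w(x_1-x_2,x_2)\rvert\cdot\lvert\lvert \tau\mapsto V_1(x_1+\tau)V_2(x_2+\tau)Z(\tau)\rvert\rvert_{H^{s,r}}\,dx_1\,dx_2.$$
The problem is thereby reduced to estimating the $H^{s,r}$ norm of a translated triple product, uniformly in the shifts $(x_1,x_2)$.

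The crucial step is then a three-factor Leibniz-type inequality. I would obtain this by iterating Lemma \ref{interpolationargument}: first apply it to the pair $(V_1V_2)\cdot Z$, choosing the two exponent pairs so that one of them isolates the norm $\lvert\lvert Z\rvert\rvert_{H^{s,q_3}}$ (with $\lvert\lvert V_1V_2\rvert\rvert_{L^{\alpha_1}}$, $\frac{1}{\alpha_1}=\frac{1}{r_2}+\frac{1}{p_3}$, estimated by Hölder as $\lvert\lvert V_1\rvert\rvert_{L^{r_2}}\lvert\lvert V_2\rvert\rvert_{L^{p_3}}$), and the other puts the derivative on $V_1V_2$. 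For the latter factor $\lvert\lvert V_1V_2\rvert\rvert_{H^{s,\alpha_2}}$, a second application of Lemma \ref{interpolationargument} redistributes the derivative to either $V_1$ or $V_2$. The three hypothesis equalities $\frac{1}{r}=\frac{1}{r_i}+\frac{1}{p_i}+\frac{1}{q_i}$ for $i=1,2,3$ are tailored precisely to the three possible placements of the single derivative onto one of $V_1, V_2, Z$. The constraint $r_3=r_2$ reflects the fact that $V_1$ enters with a plain $L$-norm in both the "derivative on $V_2$" and "derivative on $Z$" cases, so a single $L^{r_2}$ norm suffices for both. The outcome is the pointwise bound
$$\lvert\lvert V_1 V_2 Z\rvert\rvert_{H^{s,r}}\lesssim \lvert\lvert V_1\rvert\rvert_{H^{s,r_1}}\lvert\lvert V_2\rvert\rvert_{L^{p_1}}\lvert\lvert Z\rvert\rvert_{L^{q_1}}+\lvert\lvert V_1\rvert\rvert_{L^{r_2}}\lvert\lvert V_2\rvert\rvert_{H^{s,p_2}}\lvert\lvert Z\rvert\rvert_{L^{q_2}}+\lvert\lvert V_1\rvert\rvert_{L^{r_2}}\lvert\lvert V_2\rvert\rvert_{L^{p_3}}\lvert\lvert Z\rvert\rvert_{H^{s,q_3}}.$$

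Since all $L^p$ and $H^{s,p}$ norms are translation invariant, this bound persists for the translated product $V_1(x_1+\cdot)V_2(x_2+\cdot)Z(\cdot)$. Inserting it into the Minkowski estimate and performing the change of variables $(u,v)=(x_1-x_2,x_2)$ yields a factor $\int\lvert w(u,v)\rvert\,du\,dv=\lvert w\rvert((\R^d)^2)<\infty$ since $w$ is a finite measure, so the inequality of the lemma follows after absorbing each factor into its corresponding intersection norm.

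The main obstacle I anticipate is keeping the bookkeeping of the intermediate exponents tight: the splits chosen in the first application of Lemma \ref{interpolationargument} must produce exponents $\alpha_1,\alpha_2$ at which Hölder (respectively, the second application of Lemma \ref{interpolationargument}) actually returns norms of $V_1$ and $V_2$ that appear in the hypotheses. Potential extraneous cross-terms (for example from redistributions that would demand $V_1\in L^{r_1}$ or an $H^{s}$-norm on $Z$ beyond $H^{s,q_3}$) have to be shown to be either absent or dominated by the allowed intersection norms, and it is precisely here that the symmetric condition $r_3=r_2$ together with the three balanced Hölder relations prevents the appearance of spaces outside the hypothesis.
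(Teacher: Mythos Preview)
Your overall strategy---Minkowski's inequality to pull the $H^{s,r}$ norm inside the $w$-integral, followed by an iterated application of Lemma~\ref{interpolationargument} to the translated triple product---is exactly the paper's approach. However, the \emph{order} in which you iterate the Leibniz rule does not work.

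You propose to split $(V_1V_2)\cdot Z$ first, isolating $Z$. Lemma~\ref{interpolationargument} then gives a bound of the form $\lVert V_1V_2\rVert_{L^{\alpha_1}\cap H^{s,\alpha_2}}\lVert Z\rVert_{L^{\beta_2}\cap H^{s,q_3}}$ with a \emph{single} Lebesgue exponent $\beta_2$ on $Z$. The subsequent split of $\lVert V_1V_2\rVert_{H^{s,\alpha_2}}$ produces two terms, one with the derivative on $V_1$ and one with the derivative on $V_2$, but both are paired with the \emph{same} $\lVert Z\rVert_{L^{\beta_2}}$. To match the hypothesis you would therefore need $\beta_2=q_1$ and $\beta_2=q_2$ simultaneously, i.e.\ $q_1=q_2$, which is not assumed. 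The condition $r_3=r_2$ does nothing to rescue this ordering; it equates the $L$-exponents attached to $V_1$, not to $Z$.

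The paper instead splits $V_1$ off first: $\lVert V_1\cdot(V_2Z)\rVert_{H^{s,r}}\lesssim \lVert V_1\rVert_{H^{s,r_1}\cap L^{r_2}}\lVert V_2Z\rVert_{L^{t_1}\cap H^{s,t_2}}$ with $\tfrac{1}{t_1}=\tfrac{1}{p_1}+\tfrac{1}{q_1}$ and $\tfrac{1}{t_2}=\tfrac{1}{p_2}+\tfrac{1}{q_2}=\tfrac{1}{p_3}+\tfrac{1}{q_3}$; the last equality is exactly where $r_3=r_2$ is used. Then $\lVert V_2Z\rVert_{L^{t_1}}$ is handled by H\"older and $\lVert V_2Z\rVert_{H^{s,t_2}}$ by a second application of Lemma~\ref{interpolationargument}, which now has two free exponent pairs available. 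The asymmetry in the hypotheses---$V_1$ lives in only two spaces while $V_2$ and $Z$ live in three---is precisely what dictates peeling off $V_1$ first. Reversing your order of iteration fixes the proof immediately.
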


\begin{proof}
As in the proof of Lemma \ref{lemmeconvoldast} we get by duality that: 

$$N:=\lvert\lvert (w\circledast(V_1,V_2))Z\rvert\rvert_{H^{s,r}}\leq \int \lvert w(x_1-x_2,x_2)\rvert \lvert \lvert x\mapsto V_1(x_1+x)V_2(x_2+x)Z(x)\rvert\rvert_{H^{s,r}}dx_1dx_2.$$

Using Lemma \ref{interpolationargument} gives: 

$$N\leq \int \lvert w(x_1-x_2,x_2)\rvert \lvert \lvert x\mapsto V_1(x_1+x)\rvert\rvert_{H^{s,r_1}\cap L^{r_2}}\lvert \lvert x\mapsto V_2(x_2+x)Z(x)\rvert\rvert_{L^{t_1}\cap H^{s,t_2}}dx_1dx_2,$$

where: $$\frac{1}{t_1}=\frac{1}{p_1}+\frac{1}{q_1},$$ and : $$\frac{1}{t_2}=\frac{1}{p_2}+\frac{1}{q_2}=\frac{1}{p_3}+\frac{1}{q_3}.$$

By Hölder's inequality: 
$$\lvert \lvert x\mapsto V_2(x_2+x)Z(x)\rvert\rvert_{L^{t_1}}\leq \lvert\lvert V_2\rvert\rvert_{L^{p_1}}\lvert\lvert Z\rvert\rvert_{L^{q_1}}.$$

By Lemma \ref{interpolationargument}: 

$$\lvert \lvert x\mapsto V_2(x_2+x)Z(x)\rvert\rvert_{H^{s,t_2}}\lesssim \lvert\lvert V_2\rvert\rvert_{H^{s,p_2}\cap L^{p_3}}\lvert\lvert Z\rvert\rvert_{L^{q_2}\cap H^{s,q_3}}.$$

Then we get:
$$\lvert \lvert x\mapsto V_2(x_2+x)Z(x)\rvert\rvert_{L^{t_1}\cap H^{s,t_2}}\lesssim \lvert\lvert V_2\rvert\rvert_{L^{p_1}\cap H^{s,p_2}\cap L^{p_3}}\lvert\lvert Z\rvert\rvert_{L^{q_1}\cap L^{q_2}\cap H^{s,q_3}}.$$

And finally we have: 
$$\lvert\lvert (w\circledast(V_1,V_2))Z\rvert\rvert_{L^r}\lesssim \int \lvert w(x_1-x_2,x_2)\rvert dx_1dx_2\lvert\lvert V_1\rvert\rvert_{H^{s,r_1}\cap L^{r_2}}\lvert\lvert V_2\rvert\rvert_{L^{p_1}\cap H^{s,p_2}\cap L^{p_3}}\lvert\lvert Z\rvert\rvert_{L^{q_1}\cap L^{q_2}\cap H^{s,q_3}}.$$

The lemma is then proved.
\end{proof}

\begin{remark}
    We note that the product $(u,v)\mapsto w\circledast (u,v)$ is symmetric, so we also have, with the same framework as in Lemma \ref{lemmeconvoldast2}: 
    $$\lvert\lvert (w\circledast(V_2,V_1))Z\rvert\rvert_{H^{s,r}}\lesssim \lvert\lvert V_1\rvert\rvert_{H^{s,r_1}\cap L^{r_2}}\lvert\lvert V_2\rvert\rvert_{L^{p_1}\cap H^{s,p_2}\cap L^{p_3}}\lvert\lvert Z\rvert\rvert_{L^{q_1}\cap L^{q_2}\cap H^{s,q_3}}.$$
\end{remark}

\begin{proposition}\label{cub2}
    For all $Z\in \Theta_Z$ and $u,v\in \Theta_{V}$,

    \begin{equation}\label{cub2Z}
        \lvert\lvert \Tilde{W}_{u,v}(Z)\rvert\rvert_{\Theta_Z}\lesssim \lvert\lvert Z\rvert\rvert_{\Theta_Z}\lvert\lvert u\rvert\rvert_{\Theta_{V}}\lvert\lvert v\rvert\rvert_{\Theta_{V}},
    \end{equation} and 

    \begin{equation}\label{cub2V}
        \lvert\lvert Re\E(\Tilde{W}_{u,v}(Z))\rvert\rvert_{\Theta_{V_1}}\lesssim \lvert\lvert Z\rvert\rvert_{\Theta_Z}\lvert\lvert u\rvert\rvert_{\Theta_{V}}\lvert\lvert v\rvert\rvert_{\Theta_{V}}.
    \end{equation}
\end{proposition}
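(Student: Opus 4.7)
The plan is to mirror Propositions \ref{wvzprop} and \ref{quad}, with the deterministic factor $Y$ in the source replaced by an arbitrary $Z \in \Theta_Z$. First, I would apply Proposition \ref{NLZ} to reduce (\ref{cub2Z}) to the scalar estimate
\[
\lvert\lvert (w\circledast(u,v))Z\rvert\rvert_{L_t^{p_*'},H_x^{s_c,p_*'},L_\omega^2}\lesssim \lvert\lvert u\rvert\rvert_{\Theta_V}\lvert\lvert v\rvert\rvert_{\Theta_V}\lvert\lvert Z\rvert\rvert_{\Theta_Z},
\]
and Proposition \ref{NLV1} to reduce (\ref{cub2V}) to the same bound together with the companion estimate in $L_{t,x}^{\frac{2(d+2)}{d+6}},L_\omega^2$. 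Since $u$, $v$ and hence $w\circledast(u,v)$ are deterministic, the $L_\omega^2$-norm passes trivially onto $Z$, and the residual estimates become purely deterministic, bilinear in $(u,v)$ and linear in $Z$.

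For the $L_t^{p_*'},H_x^{s_c,p_*'}$ bound, I would split $u=u_1+u_2$, $v=v_1+v_2$ with $u_i,v_i\in\Theta_{V_i}$ and expand into four cross terms $(w\circledast(u_a,v_b))Z$, each handled by Lemma \ref{lemmeconvoldast2} with $s=s_c$, $r=p_*'$. In each case one selects three Hölder triples $(r_i,p_i,q_i)$, $i=1,2,3$, summing to $\tfrac{1}{p_*'}=\tfrac{d+4}{2(d+2)}$, one per factor carrying the derivative, subject to the constraint $r_3=r_2$. For the term $(u_1,v_1)$ one can use the identity $\tfrac{d}{2(d+2)}+\tfrac{1}{2(d+2)}+\tfrac{3}{2(d+2)}=\tfrac{d+4}{2(d+2)}$ and rotate the $L_t^{p_*}H_x^{s_c,p_*}$-slot among the three factors, which requires only the embeddings $\Theta_{V_1}\hookrightarrow L_t^{p_*}H_x^{s_c,p_*}\cap L_{t,x}^{2(d+2)}\cap L_{t,x}^{\frac{2(d+2)}{3}}$ and $\Theta_Z\hookrightarrow L_t^{p_*}H_x^{s_c,p_*}\cap L_{t,x}^{\frac{2(d+2)}{3}},L_\omega^2$. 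For the mixed cases $(u_1,v_2)$ and $(u_2,v_1)$ one swaps a $\Theta_{V_1}$-slot for a $\Theta_{V_2}$-slot and invokes $\Theta_{V_2}\hookrightarrow L_t^{\frac{2(d+2)}{d+1}}H_x^{s_c,\frac{2(d+2)}{d+1}}\cap L_{t,x}^{d+2}\cap L_{t,x}^{\frac{2(d+2)}{3}}$, replacing $\tfrac{d}{2(d+2)}$ by $\tfrac{d+1}{2(d+2)}$ when the derivative falls on the $\Theta_{V_2}$-factor. For $(u_2,v_2)$ the relevant splitting is $\tfrac{d+1}{2(d+2)}+\tfrac{2}{2(d+2)}+\tfrac{1}{2(d+2)}=\tfrac{d+4}{2(d+2)}$.

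For the companion estimate in $L_{t,x}^{\frac{2(d+2)}{d+6}},L_\omega^2$ no Sobolev derivative is needed: a single application of Lemma \ref{lemmeconvoldast2} in its Lebesgue form, based on the identity $\tfrac{d+6}{2(d+2)}=\tfrac{1}{2}+\tfrac{1}{d+2}+\tfrac{1}{d+2}$, distributes the three factors as $u\in L_{t,x}^2$, $v\in L_{t,x}^{d+2}$, $Z\in L_{t,x}^{d+2},L_\omega^2$. Each is controlled by its $\Theta$-space through the embeddings already recorded in the proof of Proposition \ref{quad}: $\Theta_V\hookrightarrow L_{t,x}^2$ and $\Theta_V\hookrightarrow L_{t,x}^{d+2}$ (the latter by interpolation between the $L_{t,x}^{2(d+2)}$ and $L_{t,x}^{\frac{d+2}{2}}$ components of $\Theta_{V_1}$, and directly for $\Theta_{V_2}$), and $\Theta_Z\hookrightarrow L_{t,x}^{d+2},L_\omega^2$ by interpolation between $L_t^{p_*}L_x^{p_*},L_\omega^2$ and $L_{t,x}^{2(d+2)},L_\omega^2$.

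The main obstacle is the bookkeeping involved in selecting, for each of the four cross terms, three compatible Hölder triples respecting $r_3=r_2$ while staying inside the available embeddings of $\Theta_{V_1}$, $\Theta_{V_2}$ and $\Theta_Z$. No new conceptual ingredient should be required: the exponents are exactly those already in play in Propositions \ref{wvzprop} and \ref{quad}, and substituting $Z\in\Theta_Z$ for $Y$ simply means invoking the full Strichartz family of $\Theta_Z$ in place of the $L_{t,x}^\infty,L_\omega^2$ bound on $Y$ that was used in Proposition \ref{quad}.
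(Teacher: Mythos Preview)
Your proposal is correct and follows essentially the same route as the paper: reduce via Propositions \ref{NLZ} and \ref{NLV1} to the two source estimates, split $u=u_1+u_2$, $v=v_1+v_2$, and handle the four cross terms by the trilinear H\"older/Leibniz machinery, with the same $\tfrac{1}{2}+\tfrac{1}{d+2}+\tfrac{1}{d+2}$ decomposition for the $L_{t,x}^{\frac{2(d+2)}{d+6}}$ bound. The only cosmetic difference is that the paper treats the diagonal terms $(u_1,v_1)$ and $(u_2,v_2)$ in two steps (Lemma \ref{interpolationargument} to peel off $Z$, then Lemma \ref{lemmeconvoldast} on $w\circledast(u_a,v_a)$), whereas you apply Lemma \ref{lemmeconvoldast2} directly to all four terms; your ``rotation'' of the $H^{s_c,p_*}$ slot is exactly what Lemma \ref{lemmeconvoldast2} encodes once the constraint $r_3=r_2$ is respected, and the exponents you list are compatible with that constraint.
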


\begin{proof}
    By Proposition \ref{NLZ}:
$$\lvert\lvert \Tilde{W}_{u,v}(Z)\rvert\rvert_{\Theta_Z}\lesssim \lvert\lvert (w\circledast (u,v))Z\rvert\rvert_{L^{p_*'},H^{s_c,p_*'},L_\omega^2},$$

and by Proposition \ref{NLV1} 
$$\lvert\lvert Re\E(\Bar{Y}\Tilde{W}_{u,v}(Z))\rvert\rvert_{\Theta_{V_1}}\lesssim \lvert\lvert(w\circledast (u,v))Z\rvert\rvert_{L^{p_*'},H^{s_c,p_*'},L_\omega^2}+\lvert\lvert (w\circledast (u,v))Z\rvert\rvert_{L_{t,x}^{\frac{2(d+2)}{d+6}},L_\omega^2}.$$

It is sufficient we give estimates of $\lvert\lvert(w\circledast (u,v))Z\rvert\rvert_{L^{p_*'},H^{s_c,p_*'},L_\omega^2}$ and of $\lvert\lvert (w\circledast (u,v))Z\rvert\rvert_{L_{t,x}^{\frac{2(d+2)}{d+6}},L_\omega^2}$.

\textbf{First estimate} We begin by $\lvert\lvert(w\circledast (u,v))Z\rvert\rvert_{L^{p_*'},H^{s_c,p_*'},L_\omega^2}$, and we denote: $u=u_1+u_2\in \Theta_{V_1}+\Theta_{V_2}$ and $v=v_1+v_2\in \Theta_{V_1}+\Theta_{V_2}$. We have:

\begin{eqnarray*}
    \lvert\lvert (w\circledast (u,v))Z\rvert\rvert_{L^{p_*'},H^{s_c,p_*'},L_\omega^2}&\leq & \lvert\lvert (w\circledast (u_1,v_1))Z\rvert\rvert_{L^{p_*'},H^{s_c,p_*'},L_\omega^2}+\lvert\lvert (w\circledast (u_2,v_1))Z\rvert\rvert_{L^{p_*'},H^{s_c,p_*'},L_\omega^2}\\
    & & +\lvert\lvert (w\circledast (u_1,v_2))Z\rvert\rvert_{L^{p_*'},H^{s_c,p_*'},L_\omega^2}+\lvert\lvert (w\circledast (u_2,v_2))Z\rvert\rvert_{L^{p_*'},H^{s_c,p_*'},L_\omega^2}.
\end{eqnarray*}

By Lemma \ref{interpolationargument} we have: 

$$\lvert\lvert (w\circledast(u_1,v_1))Z\rvert\rvert_{L^{p_*'},H_x^{s_c,p_*'},L_\omega^2}\leq \lvert\lvert w\circledast(u_1,v_1)\rvert\rvert_{L_{t,x}^{\frac{d+2}{2}}\cap L_t^2,H_x^{s_c}}\lvert\lvert Z\rvert\rvert_{L_t^{p_*},H_x^{s_c,p_*},L_\omega^2\cap L_{t,x}^{d+2},L_\omega^2},$$

because: $$\frac{1}{p_*'}=\frac{2}{d+2}+\frac{1}{p_*}=\frac{1}{d+2}+\frac{1}{2}.$$

Moreover, by Lemma \ref{lemmeconvoldast}, we have:

$$\lvert\lvert w\circledast(u_1,v_1)\rvert\rvert_{L_t^2,H_x^{s_c}}\lesssim \lvert\lvert u_1\rvert\rvert_{L_t^{p_*},H_x^{s_c,p_*}\cap L_{t,x}^{d+2}}\lvert\lvert v_1\rvert\rvert_{L_t^{p_*},H_x^{s_c,p_*}\cap L_{t,x}^{d+2}},$$

because: $\frac{1}{2}=\frac{1}{d+2}+\frac{1}{p_*}$.

And, still by Lemma \ref{lemmeconvoldast}: 

$$\lvert\lvert w\circledast(u_1,v_1)\rvert\rvert_{L_{t,x}^{\frac{d+2}{2}}\cap L_t^2,H_x^{s_c}}\lesssim \lvert\lvert u_1\rvert\rvert_{ L_{t,x}^{d+2}}\lvert\lvert v_1\rvert\rvert_{L_{t,x}^{d+2}}.$$

Then we get: 

$$\begin{array}{rl}
     \lvert\lvert (w\circledast(u_1,v_1))Z\rvert\rvert_{L^{p_*'},H_x^{s_c,p_*'},L_\omega^2}& \lesssim \lvert\lvert u_1\rvert\rvert_{L_t^{p_*},H_x^{s_c,p_*}\cap L_{t,x}^{d+2}}\lvert\lvert v_1\rvert\rvert_{L_t^{p_*},H_x^{s_c,p_*}\cap L_{t,x}^{d+2}}\lvert\lvert Z\rvert\rvert_{(L_t^{p_*},H_x^{s_c,p_*}\cap L_{t,x}^{d+2}),L_\omega^2}.
\end{array}$$

 As $p_*\leq d+2 \leq 2(d+2)$, $\Theta_{V_1}$ is continuously embedded in $L_{t,x}^{d+2}$ and $\Theta_Z$ is continuously embedded in $L_{t,x}^{d+2},L_\omega^2$, so: 

 $$\lvert\lvert (w\circledast(u_1,v_1))Z\rvert\rvert_{L^{p_*'},H_x^{s_c,p_*'},L_\omega^2}\lesssim \lvert\lvert u_1\rvert\rvert_{\Theta_{V_1}}\lvert\lvert v_1\rvert\rvert_{\Theta_{V_1}}\lvert\lvert Z\rvert\rvert_{\Theta_Z}$$

Moreover, by Lemma \ref{lemmeconvoldast}: 

$$\lvert\lvert w\circledast(u_2,v_2)\rvert\rvert_{L_{t,x}^{\frac{2(d+2)}{d+3}},H_x^{s_c,\frac{2(d+2)}{d+3}}}\lesssim  \lvert\lvert u_2\rvert\rvert_{L_t^{\frac{2(d+2)}{d+1}},H_x^{s_c,\frac{2(d+2)}{d+1}}\cap L_{t,x}^{d+2}}\lvert\lvert v_2\rvert\rvert_{L_t^{\frac{2(d+2)}{d+1}},H_x^{s_c,\frac{2(d+2)}{d+1}}\cap L_{t,x}^{d+2}},$$

because: $\frac{d+3}{2(d+2)}=\frac{d+1}{2(d+2)}+\frac{1}{d+2}$.

Using Lemma \ref{lemmeconvoldast} again: $$\lvert\lvert w\circledast(u_2,v_2)\rvert\rvert_{L_{t,x}^{\frac{d+2}{2}}}\leq \lvert\lvert u_2\rvert\rvert_{L_{t,x}^{d+2}}\lvert\lvert v_2\rvert\rvert_{L_{t,x}^{d+2}}.$$
 
Finally, we get:
 
$$\lvert\lvert (w\circledast(u_2,v_2))Z\rvert\rvert_{L^{p_*'},H_x^{s_c,p_*'},L_\omega^2}\lesssim \lvert\lvert u_2\rvert\rvert_{L_t^{\frac{2(d+2)}{d+1}},H_x^{s_c,\frac{2(d+2)}{d+1}}\cap L_{t,x}^{d+2}}\lvert\lvert v_2\rvert\rvert_{L_t^{\frac{2(d+2)}{d+1}},H_x^{s_c,\frac{2(d+2)}{d+1}}\cap L_{t,x}^{d+2}}\lvert\lvert Z\rvert\rvert_{(L_{t,x}^{2(d+2)}\cap L_{t}^{p_*},H_x^{s_c,p_*}),L_\omega^2}.$$ 

 Thus: $$\lvert\lvert (w\circledast(u_2,v_2))Z\rvert\rvert_{L^{p_*'},H_x^{s_c,p_*'},L_\omega^2}\lesssim  \lvert\lvert u_2\rvert\rvert_{\Theta_{V_2}}\lvert\lvert v_2\rvert\rvert_{\Theta_{V_2}}\lvert\lvert Z\rvert\rvert_{\Theta_Z}.$$

For the last terms we use again Lemma \ref{lemmeconvoldast2} and if we denote $A:=\lvert\lvert (w\circledast(u_1,v_2))Z\rvert\rvert_{L^{p_*'},H_x^{s_c,p_*'},L_\omega^2}$, we get: 

$$A \lesssim \lvert\lvert u_1\rvert\rvert_{L_t^{p_*},H_x^{s_c,p_*}\cap L_{t,x}^{\frac{4(d+2)}{3}}\cap L_{t,x}^{d+2}}\lvert\lvert v_2\rvert\rvert_{L_{t,x}^{d+2}\cap L_t^{\frac{2(d+2)}{d+1}},H_x^{s_c,\frac{2(d+2)}{d+1}} }\lvert\lvert Z\rvert\rvert_{(L_t^{p_*},H_x^{s_c,p_*}\cap L_{t,x}^{\frac{4(d+2)}{3}}\cap L_{t,x}^{d+2}),L_\omega^2},$$ because: $$\frac{1}{d+2}+\frac{1}{p_*}+\frac{1}{d+2}=\frac{1}{p_*'}=\frac{d+4}{2(d+2)},$$ and: $$\frac{d+1}{2(d+2)}+2\frac{3}{4(d+2)}=\frac{1}{p_*'}=\frac{d+4}{2(d+2)}.$$

Moreover $p_*\leq d+2 \leq 2(d+2)$ so $Z\in L_{t,x}^{d+2},L_\omega^2$ and $u\in L_{t,x}^{d+2}$. 

And finally, $p_*\leq\frac{4}{3} (d+2) \leq 2(d+2)$, so $\Theta_{V_1}$ is continuously embedded in $ L_{t,x}^{\frac{4}{3}(d+2)}$ and $\Theta_{Z}$ is continuously embedded in  $ L_{t,x}^{\frac{4}{3}(d+2)},L_\omega^2$.

So we get:$$\lvert\lvert  (w\circledast (u_1,v_2))Z\rvert\rvert_{L^{p_*'},H_x^{s_c,p_*'},L_\omega^2}\lesssim \lvert\lvert u_1\rvert\rvert_{\Theta_{V_1}}\lvert\lvert v_2\rvert\rvert_{\Theta_{V_2}}\lvert\lvert Z\rvert\rvert_{\Theta_Z}.$$

And by symmetry of the product $(u,v)\to w\circledast(u,v)$, we have similarly: 

$$\lvert\lvert  (w\circledast (u_2,v_1))Z\rvert\rvert_{L^{p_*'},H_x^{s_c,p_*'},L_\omega^2}\lesssim \lvert\lvert v_1\rvert\rvert_{\Theta_{V_1}}\lvert\lvert u_2\rvert\rvert_{\Theta_{V_2}}\lvert\lvert Z\rvert\rvert_{\Theta_Z}.$$

Combining all the inequalities gives: 

$$\lvert\lvert  (w\circledast (u,v))Z\rvert\rvert_{L^{p_*'},H_x^{s_c,p_*'},L_\omega^2}\lesssim \lvert\lvert v\rvert\rvert_{\Theta_{V_1}}\lvert\lvert u\rvert\rvert_{\Theta_{V_2}}\lvert\lvert Z\rvert\rvert_{\Theta_Z}.$$

\textbf{Second estimate} To control $\lvert\lvert (w\circledast (u,v))Z\rvert\rvert_{L_{t,x}^{\frac{2(d+2)}{d+6}},L_\omega^2}$ we use the Hölder inequality and Lemma \ref{lemmeconvoldast} to get: 

    $$\lvert\lvert (w\circledast (u,v))Z)\rvert\rvert_{L_{t,x}^{\frac{2(d+2)}{d+6}},L_\omega^2}\lesssim \lvert\lvert u \lvert\lvert_{L_{t,x}^2}\lvert\lvert v\rvert\rvert_{L_{t,x}^{d+2}}\lvert\lvert Z\rvert\rvert_{L_{t,x}^{d+2},L_\omega^2}.$$

    Then we use the fact that $\Theta_{V_1}$ and $\Theta_{V_2}$ are continuously embedded in $L_{t,x}^2$ and in $L_{t,x}^{d+2}$ because $p_*\leq d+2 \leq 2(d+2)$, so: 

    $$\lvert\lvert (w\circledast(u,v))Z\rvert\rvert_{L_{t,x}^{\frac{2(d+2)}{d+6}},L_\omega^2}\lesssim \lvert\lvert u\rvert\rvert_{\Theta_{V_i}}\lvert\lvert v\rvert\rvert_{\Theta_{V_j}}\lvert\lvert Z\rvert\rvert_{\Theta_Z}.$$

Combining the estimates above allows us to conclude.

\end{proof}

\section{Function space for the initial condition}

In this part, we set a function space for the initial condition, such that we can give estimates of the constant part: 
$$C_{Z_0}=\begin{pmatrix}
    S(t)Z_0\\
    2Re\E[\Bar{Y}S(t)Z_0]\\
    0
    \end{pmatrix}.$$

\begin{proposition}
For all $Z_0\in H^{s_c},L^2_\omega\cap L_x^{\frac{2d}{d+2}},L_\omega^2$, one has that: 

\begin{equation}\label{init}
    \lvert\lvert C_{Z_0}\rvert\rvert_{\Theta_Z\times\Theta_{V_1}\times\Theta_{V_2}}\lesssim \lvert\lvert Z_0\rvert\rvert_{H^{s_c},L^2_\omega} +\lvert\lvert Z_0\rvert\rvert_{L_x^{\frac{2d}{d+2}},L_\omega^2}.
\end{equation}
\end{proposition}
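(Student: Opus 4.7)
The plan is as follows. Since the third coordinate of $C_{Z_0}$ vanishes, it suffices to control $\|S(t)Z_0\|_{\Theta_Z}$ and $\|2\Re\E[\bar Y S(t)Z_0]\|_{\Theta_{V_1}}$. The first bound is immediate from Proposition \ref{Strprop}. Writing $F:=\E[\bar Y S(t)Z_0]$, I treat the three norms defining $\Theta_{V_1}$ separately. For the Strichartz components $L_{t,x}^{2(d+2)}\cap L_t^{p_*}H_x^{s_c,p_*}$, I apply Cauchy--Schwarz in $\omega$ together with Lemma \ref{interpolationargument} and the hypothesis $\langle\nabla\rangle^jY\in L_{t,x}^\infty L_\omega^2$ for $0\le j\le s_c$, thereby reducing matters to $\|S(t)Z_0\|_{(L_{t,x}^{2(d+2)}\cap L_t^{p_*}H_x^{s_c,p_*})L_\omega^2}$, which is controlled by $\|Z_0\|_{L_\omega^2 H^{s_c}}$ via Proposition \ref{Strprop}.

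For the Besov norm $L_t^2 B_2^{-1/2,s_c-1/2}$, I split $F$ into its low- and high-frequency Littlewood--Paley pieces and estimate each by duality. The common ingredient is the identity $\int f\,S(t)g\,dx=\int g\,S(t)f\,dx$ (since the kernel of $S(t)$ is spatially symmetric), which lets one move $S(t)$ off $Z_0$ in the pairing
\begin{equation*}
\langle U,F\rangle=\E\Big\langle Z_0,\int_\R S(t)\bigl(U(t,\cdot)\bar Y(t,\cdot)\bigr)\,dt\Big\rangle_x.
\end{equation*}
For the low-frequency piece, I choose a low-frequency dual test $U=\sum_{j<0}U_j$, apply Hölder in $x$ with Sobolev-dual exponents $(2d/(d+2),2d/(d-2))$, and control the time-integrated factor by Lemma \ref{lemmestrY2} with $(p,q,s)=(\infty,2d/(d-2),1)$. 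The complex conjugation on $Y$ is handled by a time-reversal $t\mapsto -t$, which turns $\bar Y$ into another $Y$-type Wiener field built from $\bar f(-\cdot)$ and a reparametrized Wiener measure, both satisfying the same hypotheses thanks to the radial symmetry of $f$. This yields $\|F_{<0}\|_{L_t^2 B_2^{-1/2,s_c-1/2}}\lesssim\|Z_0\|_{L_x^{2d/(d+2)}L_\omega^2}$. For the high-frequency piece, I assume first $s_c\in\N$ (the general case follows by interpolation, as in the proof of Proposition \ref{NLV1lemma}); by duality against $U\in L_{t,x}^2$, I transfer $\nabla^{1/2}$ onto $U$, apply integer Leibniz to the remaining $\nabla^{s_c-1}$, and combine the same symmetry of $S(t)$ with Lemma \ref{lemmestrY2} at $(p,q,s)=(\infty,2,0)$ to bound each resulting inner integral in $L_x^2L_\omega^2$ by $\|U\|_{L_{t,x}^2}$; Cauchy--Schwarz in $(x,\omega)$ then closes the estimate against $\|Z_0\|_{L_\omega^2H^{s_c}}$, using $j_2\le s_c-1\le s_c$.

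The main technical obstacle is the low-frequency Besov estimate: one must symmetrize $S(t)$ across the duality pairing and keep track of the complex conjugation on $Y$ through a time-reversal argument, and one must verify that the Sobolev dual of the Strichartz smoothing provided by Lemma \ref{lemmestrY2} lands on exactly the exponent $2d/(d+2)$ posited on $Z_0$ in the hypothesis. All the remaining bounds are essentially routine consequences of Strichartz estimates combined with the uniform-in-$(t,x)$ bound on derivatives of $Y$ coming from assumption $(A_4)$.
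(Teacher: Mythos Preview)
Your proof is correct and follows essentially the same route as the paper's: Proposition~\ref{Strprop} for the $\Theta_Z$ component, H\"older with $\langle\nabla\rangle^{s_c}Y\in L_{t,x}^\infty L_\omega^2$ for the Strichartz pieces of $\Theta_{V_1}$, and duality for the Besov norm with Proposition~\ref{prop52}/Lemma~\ref{lemmestrY2} at the exponents $(\infty,2d/(d-2),1)$ (low frequencies) and $(\infty,2,0)$ (high frequencies). You are more explicit than the paper on two points it leaves implicit---the handling of $\bar Y$ via time reversal and the distribution of the $\langle\nabla\rangle^{s_c-1/2}$ via Leibniz and interpolation---but the overall strategy is the same.
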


\begin{proof}
By Proposition \ref{Strprop} $S(t)Z_0$ belongs to $\Theta_Z$ and: \begin{equation}\label{initz}
    \lvert\lvert S(t)Z_0\rvert\rvert_{\Theta_Z}\lesssim  \lvert\lvert Z_0\rvert\rvert_{L^2_\omega,H^{s_c}}.
\end{equation}

Now we have to show the $\Theta_{V_1}$ estimates. We start with the space-time Lebesgue norms. We have by Lemma \ref{interpolationargument}: 

\begin{equation}\label{init1}
    \lvert\lvert \E[\Bar{Y}S(t)Z_0]\rvert\rvert_{L_{t,x}^{2(d+2)}\cap L_t^4,H_x^{s_c,2\frac{d}{d-1}}\cap L_t^{2\frac{d+2}{d-1}},H_x^{s_c,2\frac{d+2}{d-1}}}\lesssim\lvert\lvert S(t)Z_0\rvert\rvert_{\Theta_Z}\leq C \lvert\lvert Z_0\rvert\rvert_{L^2_\omega,H^{s_c}},
\end{equation}because  $\langle \nabla\rangle^{s_c} Y\in L_{t,x}^\infty,L_\omega^2$, and $Y\in L_{t,x}^\infty,L_\omega^2$.

For the Besov norm, we use duality. First for the low frequency, let $U\in L^2,B_2^{1/2,0}$ with $$U=\sum_{j\leq0}U_j. $$

We write that by the Hölder inequality: $$\langle U,\E(\Bar{Y}S(t)Z_0)\rangle=\E\bigg(\langle \int_0^\infty S(-t)(\Bar{Y}U)(t)dt,Z_0\rangle\bigg)\leq \lvert\lvert \int_0^\infty S(-t)(\Bar{Y}U)(t)dt\rvert\rvert_{L_x^{\frac{2d}{d-2}}}\lvert\lvert Z_0\rvert\rvert_{L_x^{\frac{2d}{d+2}},L_\omega^2}$$

We use inequality (\ref{prop522}) with $p_1=\infty,q_1=\frac{2d}{d-2}, \sigma_1=1$, and we use the fact that $U$ contains only low frequencies: $$\lvert\lvert \int_0^\infty S(-t)(\Bar{Y}U)(t)dt\rvert\rvert_{L_x^{\frac{2d}{d-2}}}\lesssim \lvert\lvert U\rvert\rvert_{L_t^2,B_2^{\frac{1}{2},1},L_\omega^2}\lesssim \lvert\lvert U\rvert\rvert_{L_t^2,B_2^{\frac{1}{2},0}}.$$

For the high frequency part, we take $U\in L_{t,x}^2$ containing only high frequencies: $U=\sum_{j\geq 0} U_j$.

We claim that, by using Proposition \ref{prop52} with $p_1=\infty$, $q_1=2$, $\sigma_1=0$, the Hölder inequality and the fact that $U$ contains only high frequencies, we have: 

$$\lvert \langle U,\langle\nabla\rangle^{s_c-\frac{1}{2}}\E[\Bar{Y}S(t)Z_0]\rangle\rvert\lesssim \lvert\lvert U\rvert\rvert_{L_{t,x}^2}\lvert\lvert \langle \nabla\rangle^{s_c-\frac{1}{2}}Z_0\rvert\rvert_{L_x^{2},L_\omega^2}.$$

Indeed equation (\ref{prop522}) from Proposition \ref{prop52} gives: $$\lvert\lvert \int_0^\infty S(-t)(\Bar{Y}U)(t)dt\rvert\rvert_{L_x^2,L_\omega^2}\lesssim \lvert\lvert U\rvert\rvert_{L_t^2,B_2^{-\frac{1}{2},-\frac{1}{2}}}\lesssim \lvert\lvert U\rvert\rvert_{L_t^2,B_2^{0,0}}.$$

Combining the inequalities on the high and low frequency parts, one gets that by duality: \begin{equation}\label{init2}
    \lvert\lvert \E(\Bar{Y}S(t)Z_0)\rvert\rvert_{L_t^2,B_2^{-\frac{1}{2},s_c-\frac{1}{2}}}\lesssim \lvert\lvert Z_0\rvert\rvert_{H^{s_c},L^2_\omega}+\lvert\lvert Z_0\rvert\rvert_{L_x^{\frac{2d}{d+2}},L_\omega^2}.
\end{equation}

Combining (\ref{init1}) and (\ref{init2}), one gets: 

\begin{equation}\label{initV}
    \lvert\lvert \E(\Bar{Y}S(t)Z_0)\rvert\rvert_{\Theta_V}\lesssim \lvert\lvert Z_0\rvert\rvert_{H^{s_c},L^2_\omega}+\lvert\lvert Z_0\rvert\rvert_{L_x^{\frac{2d}{d+2}},L_\omega^2}.
\end{equation}

By recalling the expression of $C_{Z_0}$, given by (\ref{constantpart}), we get by combining (\ref{initz}) and (\ref{initV}) the desired result (\ref{init}). 

\end{proof}

\begin{remark}\label{rmkCI}
    With a similar proof we would have: $$\lvert\lvert \E[\Bar{Y}S(t)Z_0]\rvert\rvert_{\Theta_{V_2}}\lesssim \lvert\lvert Z_0\rvert\rvert_{L^{\frac{2d}{d+2}},L_\omega^2}+\lvert\lvert Z_0\rvert\rvert_{H^{s_c+\delta},L_\omega^2},$$
    with $\delta>0$. So $Z_0$ would need to be in a bigger space regarding regularity, if we kept $V_1$ and $V_2$ in a similar functional space, as it was done in \cite{CodS18}.
    This motivates the introduction of the two spaces $\Theta_{V_1}$ and $\Theta_{V_2}$.
\end{remark}

\section{Proof of the main theorem}

In this section, we give the last arguments to finish the proof of Theorem \ref{Thprinc}. The proof of the theorem relies on finding a solution to the fixed point equation (\ref{ptfixe}). We split this section in two subsections. We first write the fixed point argument to solve (\ref{ptfixe}) and then we prove the scattering result.

\subsection{Fixed point argument}

In section 5 we proved that $1-L$ is continuous, invertible of continuous inverse on $\Theta_Z\times \Theta_{V_1}\times \Theta_{V_2}$. Then we can rewrite (\ref{ptfixe}) as: $$\begin{pmatrix} Z\\ V_1 \\ V_2 \end{pmatrix}=(Id-L)^{-1}\bigg( C_{Z_0}+\Rop \begin{pmatrix} Z\\ V_1\\ V_2 \end{pmatrix}\bigg),$$

with: \begin{equation}
    \Rop\begin{pmatrix}
    Z\\ 
    V_1\\
    V_2
    \end{pmatrix}=\begin{pmatrix}
    2\Tilde{m}W_{V_1+V_2}(Z)+\Tilde{W}_{V_1+V_2,V_2+V_2}(Y+Z)\\
    4\Tilde{m}Re\E[\Bar{Y}W_{V_1+V_2}(Z)]+2Re\E[\Bar{Y}\Tilde{W}_{V_1+V_2,V_2+V_2}(Y+Z)]\\
    \E[\lvert Z\rvert^2]
    \end{pmatrix}.
\end{equation}

We use the classical method to solve this Banach fixed point problem. First we define the mapping: 
$$\Phi[Z_0]:\left \{ \begin{array}{rcl}
     \Theta_Z\times\Theta_{V_1}\times\Theta_{V_2} & \to & \Theta_Z\times\Theta_{V_1}\times\Theta_{V_2} \\
     \begin{pmatrix} Z\\ V_1 \\ V_2 \end{pmatrix}& \mapsto & (Id-L)^{-1}\bigg( C_{Z_0}+\Rop \begin{pmatrix} Z\\ V_1\\V_2 \end{pmatrix}\bigg) 
\end{array}\right . .$$

Let us denote $\Theta_0:=H^{s_c},L^2_\omega\cap L_x^{\frac{2d}{d+2}},L_\omega^2$ the space for the initial datum. We are going to show that for $Z_0$ small enough in $\Theta_0$, the mapping $\Phi[Z_0]$ is a contraction on $B(0,R\lvert\lvert Z_0\rvert\rvert_{\Theta_0})$, for some $R>0$. For simplification we denote the norm $\lvert\lvert \cdot \rvert\rvert_{\Theta_Z\times \Theta_{V_1}\times \Theta_{V_2}}$ by $\lvert\lvert \cdot \rvert\rvert$.

By Proposition \ref{proplinpartinv}, we already have that $(Id-L)^{-1}\in\Lop(\Theta_Z\times\Theta_{V_1}\times \Theta_{V_2})$, so: 

$$\bigg\lvert\bigg\lvert \Phi[Z_0]\begin{pmatrix} Z\\ V_1 \\ V_2 \end{pmatrix} \bigg\rvert\bigg\rvert\lesssim \bigg\lvert\bigg\lvert  C_{Z_0}+\Rop \begin{pmatrix} Z\\ V_1 \\ V_2 \end{pmatrix} \bigg\rvert\bigg\rvert.$$

And by the estimate (\ref{init}) we have: $$\lvert\lvert C_{Z_0}\rvert\rvert\lesssim \lvert\lvert Z_0 \rvert\rvert_{\Theta_0}.$$

To study the operator $\Rop$ we will write $\Rop=\Qop_1+\Qop_2$ where $\Qop_1=\Qop_{1,1}+\Qop_{1,2}$ is the quadratic part with: $$\Qop_{1,1}\begin{pmatrix}Z\\ V_1 \\V_2\end{pmatrix}=\begin{pmatrix} 0 \\ 0 \\ \E[\lvert Z\rvert^2]\end{pmatrix}+\begin{pmatrix} 2\Tilde{m} (W_{V_1}(Z)+W_{V_2}(Z))\\ 4\Tilde{m}Re\E[\Bar{Y}(W_{V_1}(Z)+W_{V_2}(Z))] \\ 0 \end{pmatrix},$$ 

and: $$\Qop_{1,2}\begin{pmatrix}Z\\ V_1 \\V_2\end{pmatrix}=\begin{pmatrix} \Tilde{W}_{V_1,V_1}(Y)+2\Tilde{W}_{V_1,V_2}(Y)+\Tilde{W}_{V_2,V_2}(Y)\\ 2Re\E[\Bar{Y}(\Tilde{W}_{V_1,V_1}(Y)+2\Tilde{W}_{V_1,V_2}(Y)+\Tilde{W}_{V_2,V_2}(Y))] \\ 0 \end{pmatrix},$$

and $\Qop_2$ the cubic part: $$\Qop_2\begin{pmatrix}Z\\ V_1 \\V_2\end{pmatrix}=\begin{pmatrix} \Tilde{W}_{V_1,V_1}(Z)+2\Tilde{W}_{V_1,V_2}(Z)+\Tilde{W}_{V_2,V_2}(Z)\\ 2Re\E[\Bar{Y}(\Tilde{W}_{V_1,V_1}(Z)+2\Tilde{W}_{V_1,V_2}(Z)+\Tilde{W}_{V_2,V_2}(Z))] \\ 0 \end{pmatrix},$$

because $(u,v)\mapsto \Tilde{W}_{u,v}(A)$ is symmetric. 

First by Propositions \ref{propembedding} and \ref{wvzprop}: 

$$\bigg\lvert\bigg\lvert \Qop_{1,1}\begin{pmatrix}Z\\ V_1 \\V_2\end{pmatrix}\bigg\rvert\bigg\rvert\lesssim \bigg\lvert \bigg\lvert \begin{pmatrix}Z\\ V_1 \\V_2\end{pmatrix}\bigg\rvert\bigg\rvert^2\leq  R^2\lvert\lvert Z_0\rvert\rvert_{\Theta_0}^2. $$

And we have by bilinearity of $(u,v)\mapsto W_u(v)$:

$$\bigg\lvert \bigg\lvert \Qop_{1,1}\begin{pmatrix}Z\\ V_1 \\V_2\end{pmatrix}-\Qop_{1,1}\begin{pmatrix}Z'\\ V_1 '\\V_2'\end{pmatrix}\bigg\rvert\bigg\rvert\lesssim\bigg( \bigg\lvert \bigg\lvert \begin{pmatrix}Z\\ V_1 \\V_2\end{pmatrix}\bigg\rvert\bigg\rvert+\bigg\lvert \bigg\lvert \begin{pmatrix}Z'\\ V_1' \\V_2'\end{pmatrix}\bigg\rvert\bigg\rvert\bigg)\bigg\lvert \bigg\lvert \begin{pmatrix}Z-Z'\\ V_1-V_1' \\V_2-V_2'\end{pmatrix}\bigg\rvert\bigg\rvert,$$
thus:

$$\bigg\lvert \bigg\lvert \Qop_{1,1}\begin{pmatrix}Z\\ V_1 \\V_2\end{pmatrix}-\Qop_{1,1}\begin{pmatrix}Z'\\ V_1' \\V_2'\end{pmatrix}\bigg\rvert\bigg\rvert\lesssim R\lvert \lvert Z_0\rvert\rvert_{\Theta_0}\bigg\lvert \bigg\lvert \begin{pmatrix}Z-Z'\\ V_1-V_1' \\V_2-V_2'\end{pmatrix}\bigg\rvert\bigg\rvert.$$

From Proposition \ref{quad} we get the estimate:

$$\bigg\lvert\bigg\lvert \Qop_{1,2}\begin{pmatrix}Z\\ V_1 \\V_2\end{pmatrix}\bigg\rvert\bigg\rvert\lesssim \bigg\lvert \bigg\lvert \begin{pmatrix}Z\\ V_1 \\V_2\end{pmatrix}\bigg\rvert\bigg\rvert^2\leq R^2\lvert\lvert Z_0\rvert\rvert_{\Theta_0}^2. $$

And by bilinearity of $(u,v)\mapsto \Tilde{W}_{u,v}(Y)$ we get:

$$\bigg\lvert \bigg\lvert \Qop_{1,2}\begin{pmatrix}Z\\ V_1 \\V_2\end{pmatrix}-\Qop_{1,2}\begin{pmatrix}Z'\\ V_1' \\V_2'\end{pmatrix}\bigg\rvert\bigg\rvert\lesssim\bigg( \bigg\lvert \bigg\lvert \begin{pmatrix}Z\\ V_1 \\V_2\end{pmatrix}\bigg\rvert\bigg\rvert+\bigg\lvert \bigg\lvert \begin{pmatrix}Z'\\ V_1' \\V_2'\end{pmatrix}\bigg\rvert\bigg\rvert\bigg)\bigg\lvert \bigg\lvert \begin{pmatrix}Z-Z'\\ V_1-V_1' \\V_2-V_2'\end{pmatrix}\bigg\rvert\bigg\rvert,$$
thus:

$$\bigg\lvert \bigg\lvert \Qop_{1,2}\begin{pmatrix}Z\\ V_1 \\V_2\end{pmatrix}-\Qop_{1,2}\begin{pmatrix}Z'\\ V_1' \\V_2'\end{pmatrix}\bigg\rvert\bigg\rvert\lesssim R\lvert \lvert Z_0\rvert\rvert_{\Theta_0}\bigg\lvert \bigg\lvert \begin{pmatrix}Z-Z'\\ V_1-V_1' \\V_2-V_2'\end{pmatrix}\bigg\rvert\bigg\rvert.$$

Finally, for the cubic terms we use Proposition \ref{cub2} to get: 

$$\bigg\lvert\bigg\lvert \Qop_2\begin{pmatrix}Z\\ V_1 \\V_2\end{pmatrix}\bigg\rvert\bigg\rvert\lesssim \bigg\lvert \bigg\lvert \begin{pmatrix}Z\\ V_1 \\V_2\end{pmatrix}\bigg\rvert\bigg\rvert^3\leq R^3\lvert\lvert Z_0\rvert\rvert_{\Theta_0}^3. $$

And using that: $$\Tilde{W}_{u,v}(z)-\Tilde{W}_{u',v'}(z')=\Tilde{W}_{u,v-v'}(z)+\Tilde{W}_{u-u',v'}(z)+\Tilde{W}_{u',v'}(z-z'),$$
we obtain: 

$$\bigg\lvert \bigg\lvert \Qop_2\begin{pmatrix}Z\\ V_1 \\V_2\end{pmatrix}-\Qop_2\begin{pmatrix}Z'\\ V_1' \\V_2'\end{pmatrix}\bigg\rvert\bigg\rvert\lesssim\bigg( \bigg\lvert \bigg\lvert \begin{pmatrix}Z\\ V_1 \\V_2\end{pmatrix}\bigg\rvert\bigg\rvert^2+\bigg\lvert \bigg\lvert \begin{pmatrix}Z\\ V_1 \\V_2\end{pmatrix}\bigg\rvert\bigg\rvert\bigg\lvert \bigg\lvert \begin{pmatrix}Z'\\ V_1' \\V_2'\end{pmatrix}\bigg\rvert\bigg\rvert+\bigg\lvert \bigg\lvert \begin{pmatrix}Z'\\ V_1' \\V_2'\end{pmatrix}\bigg\rvert\bigg\rvert^2\bigg)\bigg\lvert \bigg\lvert \begin{pmatrix}Z-Z'\\ V_1-V_1' \\V_2-V_2'\end{pmatrix}\bigg\rvert\bigg\rvert,$$
thus:

$$\bigg\lvert \bigg\lvert \Qop_2\begin{pmatrix}Z\\ V_1 \\V_2\end{pmatrix}-\Qop_2\begin{pmatrix}Z'\\ V_1' \\V_2'\end{pmatrix}\bigg\rvert\bigg\rvert\lesssim R^2\lvert \lvert Z_0\rvert\rvert_{\Theta_0}^2\bigg\lvert \bigg\lvert \begin{pmatrix}Z-Z'\\ V_1-V_1' \\V_2-V_2'\end{pmatrix}\bigg\rvert\bigg\rvert.$$

From the above estimates, one gets that $\Phi[Z_0]$ is indeed a contraction on $B(0,R\lvert\lvert Z_0\rvert\rvert_{\Theta_0})$, for some universal constant $R>0$, for $\lvert\lvert Z_0\rvert\rvert_{\Theta_0}$ small enough. By the Banach's fixed point theorem, we get the existence and uniqueness of a solution to (\ref{eqperturb}) in $B(0,R\lvert\lvert Z_0\rvert\rvert_{\Theta_0})$. 

\subsection{Scattering result}

To get the scattering result we write that: 

$$\begin{array}{rcl}
     Z(t) &= &S(t)\bigg(Z_0-i\int_0^\infty S(-s)(\underline{w}*V(s))Z(s)ds-i \int_0^\infty S(-s)(\underline{w}*V(s))Y(s)ds\\
     & & -i\int_0^\infty S(-s)(w\circledast(V,V))(s)Z(s)ds-i\int_0^\infty S(-s)(w\circledast(V,V))(s)Y(s)ds\bigg)\\ 
     & & +i\int_t^\infty S(t-s)(\underline{w}*V(s))Z(s)ds+i \int_t^\infty S(t-s)(\underline{w}*V(s))Y(s)ds\\
     & & +i\int_t^\infty S(t-s)(w\circledast(V,V))(s)Z(s)ds+i\int_t^\infty S(t-s)(w\circledast(V,V))(s)Y(s)ds,
\end{array}$$

with $V=V_1+V_2$.

By Proposition \ref{wvzprop}, one obtains that $\int_0^\infty S(-s)(\underline{w}*V(s))Z(s)ds\in H^{s_c},L_\omega^2$ and that: $$\lvert\lvert\int_t^\infty S(t-s)(\underline{w}*V(s))Z(s)ds\rvert\rvert_{H^{s_c},L_\omega^2}\lesssim \lvert\lvert Z(s)\rvert\rvert_{\Theta_Z}\lvert\lvert V(s)\mathbf{1}_{s\geq t}\rvert\rvert_{\Theta_V}\to 0 $$

as $t\to +\infty$. Indeed by Proposition \ref{wvzprop} applied to $Z$ and $(V_1+V_2)\mathbf{1}_{s\geq t}$ for $t>0$ we have: 

$$\underset{\tau>0}{sup}\lvert\lvert\int_t^\tau S(t-s)(\underline{w}*V(s))Z(s)ds\rvert\rvert_{H^{s_c},L_\omega^2}\lesssim \lvert\lvert Z(s)\rvert\rvert_{\Theta_Z}\lvert\lvert V(s)\mathbf{1}_{s\geq t}\rvert\rvert_{\Theta_V}.$$

Thus for $\tau>t>0$ we have that: 

$$\lvert\lvert\int_t^\tau S(t-s)(\underline{w}*V(s))Z(s)ds\rvert\rvert_{H^{s_c},L_\omega^2}\lesssim \lvert\lvert Z(s)\rvert\rvert_{\Theta_Z}\lvert\lvert V(s)\mathbf{1}_{s\geq t}\rvert\rvert_{\Theta_V}\to 0,$$

When $t\to +\infty$.We use Cauchy criteria on complete metric space to conclude that $\int_0^\infty S(-s)(\underline{w}*V(s))Z(s)ds\in H^{s_c},L_\omega^2$ and that: $$\lvert\lvert\int_t^\infty S(t-s)(\underline{w}*V(s))Z(s)ds\rvert\rvert_{H^{s_c},L_\omega^2}\to 0 $$

as $t\to +\infty$.

Similarly, by Proposition \ref{proplin}, one obtains that $\int_0^\infty S(-s)(\underline{w}*V(s))Y(s)ds\in H^{s_c},L_\omega^2$ and that: $$\lvert\lvert\int_t^\infty S(t-s)(\underline{w}*V(s))Y(s)ds\rvert\rvert_{H^{s_c},L_\omega^2}\lesssim \lvert\lvert V(s)\mathbf{1}_{s\geq t}\rvert\rvert_{\Theta_V}\to 0 $$

as $t\to +\infty$. 

Moreover, by Proposition \ref{cub2} one obtains that $\int_0^\infty S(-s)(w\circledast(V,V)(s))Z(s)ds\in H^{s_c},L_\omega^2$ and that: $$\lvert\lvert\int_t^\infty S(t-s)(w\circledast(V,V)(s))Z(s)ds\rvert\rvert_{H^{s_c},L_\omega^2}\lesssim \lvert\lvert Z(s)\mathbf{1}_{s\geq t}\rvert\rvert_{\Theta_Z} \lvert\lvert V(s)\mathbf{1}_{s\geq t}\rvert\rvert_{\Theta_V}^2\to 0 $$

as $t\to +\infty$. 

Finally, by Proposition \ref{quad} one obtains that $\int_0^\infty S(-s)(w\circledast(V,V)(s))Y(s)ds\in H^{s_c},L_\omega^2$ and that: $$\lvert\lvert\int_t^\infty S(t-s)(w\circledast(V,V)(s))Y(s)ds\rvert\rvert_{H^{s_c},L_\omega^2}\lesssim \lvert\lvert V(s)\mathbf{1}_{s\geq t}\rvert\rvert_{\Theta_V}^2\to 0 $$

as $t\to +\infty$. Therefore, there exists indeed $Z_\infty\in H^{s_c},L_\omega^2$ such that, as $t\to \infty$: 

$$Z(t)=S(t)Z_\infty+o_{H^{s_c},L_\omega^2}(1).$$

And by the definition of $Z$ we get: 

$$X(t)=Y_f+S(t)Z_\infty+o_{H^{s_c},L_\omega^2}(1).$$

This concludes the proof of Theorem \ref{Thprinc}.

\newpage

\bibliographystyle{amsplain}
\bibliography{biblio}

\end{document}